\tikzstyle{dot}=[circle, draw=black, fill=black!25, inner sep=.4ex]
\tikzstyle{whitedot}=[circle, draw=black, fill=white, inner sep=.4ex]
\tikzstyle{blackdot}=[circle, draw=black, fill=black, inner sep=.4ex]
\newif\ifvflip\pgfkeys{/tikz/vflip/.is if=vflip}
\newif\ifhflip\pgfkeys{/tikz/hflip/.is if=hflip}
\newif\ifhvflip\pgfkeys{/tikz/hvflip/.is if=hvflip}
\newlength\morphismheight
\newlength\wedgewidth
\tikzset{width/.initial=1mm}
\tikzstyle{morphism}=[font=\small,morphismshape]
\newcommand{\tinymult}[1][dot]{
\smash{\raisebox{-2pt}{\hspace{-5pt}\ensuremath{\begin{pic}[scale=0.4,yscale=-1]
    \node (0) at (0,0) {};
    \node[#1, inner sep=1.5pt] (1) at (0,0.55) {};
    \node (2) at (-0.5,1) {};
    \node (3) at (0.5,1) {};
    \draw (0.center) to (1.center);
    \draw (1.center) to [out=left, in=down, out looseness=1.5] (2.center);
    \draw (1.center) to [out=right, in=down, out looseness=1.5] (3.center);
    \node[#1, inner sep=1.5pt] (1) at (0,0.55) {};
\end{pic}
}\hspace{-3pt}}}}
\newcommand{\tinyunit}[1][dot]{
\smash{\raisebox{1pt}{\hspace{-3pt}\ensuremath{\begin{pic}[scale=0.4,yscale=-1]
    \node (0) at (0,0) {};
    \node[#1, inner sep=1.5pt] (1) at (0,0.55) {};
    \draw (0.center) to (1.north);
\end{pic}
}\hspace{-1pt}}}}
\newenvironment{pic}[1][]
{\begin{aligned}\begin{tikzpicture}[font=\tiny,#1]}
{\end{tikzpicture}\end{aligned}}
\newcommand{\inprod}[2]{\langle #1 \mid #2 \rangle}
\renewcommand{\L}{\ensuremath{\mathcal{L}}}
\newcommand{\cat}[1]{\ensuremath{\mathbf{#1}}}
\newcommand{\op}{\ensuremath{^{\mathrm{op}}}}
\newcommand{\cp}{\ensuremath{\mathrm{cp}}}
\newcommand{\bd}{\ensuremath{^{\mathrm{bd}}}}
\newcommand{\id}[1][]{\ensuremath{\mathrm{id}_{#1}}}
\newcommand{\C}{\ensuremath{\mathbb{C}}}
\newcommand{\M}{\ensuremath{\mathbb{M}}}
\newcommand{\N}{\ensuremath{\mathbb{N}}}
\DeclareMathOperator{\Loc}{Loc}
\DeclareMathOperator{\diag}{diag}
\DeclareMathOperator{\CPs}{CP}
\DeclareMathOperator{\tr}{tr}
\DeclareMathOperator{\supp}{supp}
\DeclareMathOperator{\ev}{ev}
\DeclareMathOperator{\Radon}{Radon}
\DeclareMathOperator{\Spec}{Spec}
\theoremstyle{plain}
\newtheorem{theorem}{Theorem}[section]
\newtheorem{proposition}[theorem]{Proposition}
\newtheorem{corollary}[theorem]{Corollary}
\newtheorem{lemma}[theorem]{Lemma}
\theoremstyle{definition}
\newtheorem{definition}[theorem]{Definition}
\newtheorem{example}[theorem]{Example}
\newtheorem{remark}[theorem]{Remark}
\begin{document}
\title{Frobenius structures over Hilbert C*-modules}
\author{Chris Heunen and Manuel L. Reyes}
\date{\today. We thank Andreas Blass, Bertfried Fauser, Simon Henry, Klaus Keimel, and Sean Tull, and gratefully acknowledge support by EPSRC Fellowship EP/L002388/1 and NSF grant DMS-1407152.
}
\begin{abstract}
  We study the monoidal dagger category of Hilbert C*-modules over a commutative C*-algebra from the perspective of categorical quantum mechanics.
  The dual objects are the finitely presented projective Hilbert C*-modules.
  Special dagger Frobenius structures correspond to bundles of uniformly finite-dimensional C*-algebras.
  A monoid is dagger Frobenius over the base if and only if it is dagger Frobenius over its centre and the centre is dagger Frobenius over the base. We characterise the commutative dagger Frobenius structures as finite coverings, and give nontrivial examples of both commutative and central dagger Frobenius structures. 
  Subobjects of the tensor unit correspond to clopen subsets of the Gelfand spectrum of the C*-algebra, and we discuss dagger kernels.
\end{abstract}
\maketitle

\section{Introduction}

\emph{Categorical quantum mechanics}~\cite{heunenvicary:cqm} provides a powerful graphical calculus for quantum theory. It achieves this by stripping the traditional Hilbert space model of much detail. Nevertheless, the main examples remain based on Hilbert spaces, and relations between sets. The latter can be extended to take scalars in arbitrary quantales~\cite{abramskyheunen:hstar}. This article extends scalars in the former from complex numbers to arbitrary commutative C*-algebras. In other words, we study the monoidal category of \emph{Hilbert modules} over a commutative C*-algebra. This provides a genuinely new model, that is interesting for various reasons.
\begin{itemize}
  \item Just like commutative C*-algebras are dual to locally compact Hausdorff spaces, we prove that Hilbert modules are equivalent to bundles of Hilbert spaces over locally compact Hausdorff spaces (in Section~\ref{sec:hilbertbundles}).  
  Instead of a single Hilbert space of states, we may have Hilbert spaces over every point of a base space that vary continuously. 

  \item We prove that the abstract \emph{scalars} hide more structure than previously thought: subobjects of the tensor unit correspond to clopen subsets of the base space (see Section~\ref{sec:scalars}). This exposes a rich approach to \emph{causality}~\cite{coeckelal:causal,heunenkissinger:cbh}, and opens the possibility of handling relativistic quantum information theory categorically.
  See also~\cite{enriquemolinerheunentull:space}, which additionally characterises open subsets of the base space in purely categorical terms. This also invites questions about \emph{contextuality}~\cite{abramskybrandenburger:contextuality,abramskyheunen:operational}, that might now be addressed within categorical quantum mechanics using \emph{regular logic}~\cite{heunentull:regular}.

  \item Letting the base space vary gives a bicategory of Hilbert bimodules, which forms an infinite continuous extension of the finite \emph{higher-categorical} approach to categorical quantum mechanics~\cite{vicary:higher} (see Appendix~\ref{sec:bimodules}).
\end{itemize}
We pay particular attention to \emph{Frobenius structures} (see Section~\ref{sec:frobenius}), which model classical information flow and algebras of observables~\cite{heunenvicary:cqm}. 
\begin{itemize}
  \item We prove that dagger Frobenius structures correspond to finite-dimensional C*-algebras that vary continuously over the base space (in Section~\ref{sec:cstarbundles}). The base space may be considered as modelling \emph{spacetime}. Thus spacetime protocols can still be modelled within the setting of categorical quantum mechanics~\cite{enriquemolinerheunentull:space}, and alternative models~\cite{blutecomeau:neumann} are not needed.

  In fact, we show that this correspondence of objects extends to both $*$-homomorphisms and \emph{completely positive maps} as morphisms. In other words, we identify the result of applying the CP*-construction~\cite{coeckeheunenkissinger:cpstar} to the category of Hilbert modules. 

  \item We reduce studying Frobenius structures to studying \emph{commutative} ones and \emph{central} ones (in Section~\ref{sec:transitivity}), and give nontrivial examples of each (in Section~\ref{sec:frobenius}).
  In fact, commutative Frobenius structures are equivalent to finite coverings of the base space (see Section~\ref{sec:commutativity}). 
  The proof of this fact uses that Frobenius structures have dual objects, otherwise also finite branched coverings might be allowed~\cite{pavlovtroitskii:branchedcoverings}; we leave open a characterisation of commutative H*-algebras~\cite{abramskyheunen:hstar}.
  At any rate, Frobenius structures in a category like that of Hilbert modules need not copy classical information elementwise as previously thought: there may be no copyable states at all. This more intricate structure should inform notions of classicality~\cite{heunenkissinger:cbh}. 
  On the other hand, classifying central Frobenius structures might be done using a Brauer group~\cite{auslandergoldman:brauer,raeburnwilliams:morita}, which we leave to future work.

  \item The category of Hilbert modules category captures infinite dimension, with entirely standard methods~\cite{gogiosogenovese:nonstandard}, and without dropping unitality~\cite{abramskyheunen:hstar}: although dagger Frobenius structures form local algebras of observables that are finite-dimensional, globally they can form arbitrary homogeneous C*-algebras~\cite[IV.1.6]{blackadar:operatoralgebra}. 
\end{itemize}
The article is rounded out by auxiliary results that might be expected: Hilbert modules form a symmetric monoidal dagger category with finite dagger biproducts (see Section~\ref{sec:tensor}), and the dagger dual objects are precisely the Hilbert modules that are finitely presented projective (see Section~\ref{sec:duals}).
Finally, we prove (in Section~\ref{sec:kernels}) that the category of Hilbert modules has dagger \emph{kernels} only if the base space is totally disconnected, with a view to characterising categories of Hilbert modules.
We build on results about Hilbert modules that are fragmented in the literature, but extend them to locally compact spaces, morphisms, and daggers. To keep proofs understandable, we aim for a self-contained account.

\section{Tensor products of Hilbert modules}\label{sec:tensor}

We start by recalling the basic definitions of Hilbert modules and their morphisms, which form our category of interest.
Intuitively, a Hilbert module is a Hilbert space where the base field has been replaced with a C*-algebra. In this article C*-algebras are not necessarily unital.
For more information we refer to~\cite{lance:hilbert}.

\begin{definition}
  Let $A$ be a C*-algebra. A (right) \emph{Hilbert $A$-module} is a right $A$-module $E$, equipped with a function $\inprod{-}{-}_E \colon E \times E \to A$ that is $A$-linear in the second variable, such that:
  \begin{itemize}
  \item $\inprod{x}{y}^*=\inprod{y}{x}$;
  \item $\inprod{x}{x} \geq 0$, and $\inprod{x}{x}=0$ if and only if $x=0$;
  \item $E$ is complete in the norm $\|x\|_E^2 = \| \inprod{x}{x} \|_A$.
  \end{itemize}
  A function $f \colon E \to F$ between Hilbert $A$-modules is called \emph{bounded} by $m \in \mathbb{R}$ when $\|f(x)\|_F \leq m \|x\|_E$ for all $x \in E$; in this case the infimum of such $m$ is written $\|f\|$.
  The function $f$ is called \emph{adjointable} when there exists a function $f^\dag \colon F \to E$ satisfying $\inprod{f(x)}{y}_F = \inprod{x}{f^\dag(y)}_E$ for all $x \in E$ and $y \in F$.
\end{definition}

Write $\cat{Hilb}_C\bd$ for the category of Hilbert $C$-modules and bounded $C$-linear functions.
A \emph{dagger category} is a category $\cat{C}$ with a functor $\dag \colon \C\op \to \C$ satisfying $X^\dag=X$ on objects and $f^{\dag\dag}=f$ on morphisms.
Write $\cat{Hilb}_C$ for the dagger category of Hilbert $C$-modules and adjointable functions.  

For so-called \emph{self-dual} Hilbert $A$-modules $E,F$, these two types of morphisms coincide: $\cat{Hilb}_A\bd(E,F)=\cat{Hilb}_A(E,F)$~\cite[3.3-3.4]{paschke:selfdual}.

Our next step is to show that the tensor product of Hilbert modules is well-behaved, in the sense that it makes Hilbert modules into a monoidal dagger category.

If $E$ and $F$ are Hilbert $C$-modules over a commutative C*-algebra $C$, another Hilbert $C$-module $E \otimes F$ is given by completing the algebraic tensor product $E \otimes_{\C} F$ with the following inner product and (right) $C$-module structure:
\begin{align*}
  \inprod{x_1 \otimes y_1}{x_2 \otimes y_2} & = \inprod{x_1}{x_2} \inprod{y_1}{y_2}, \\
  (x \otimes y) c & = x \otimes (yc).
\end{align*}
For more details, see Appendix~\ref{sec:bimodules}. A \emph{monoidal dagger category} is a monoidal category that is also a dagger category in which $(f \otimes g)^\dag=f^\dag \otimes g^\dag$ and the coherence isomorphisms are unitary.

\begin{proposition}\label{prop:monoidal}
  Let $C$ be a commutative C*-algebra.
  The category $\cat{Hilb}_C\bd$ is symmetric monoidal, and $\cat{Hilb}_C$ is a symmetric monoidal dagger category.
\end{proposition}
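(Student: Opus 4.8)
The plan is to take the tensor unit to be $C$ itself, viewed as a Hilbert $C$-module under $\inprod{a}{b}=a^*b$, and to define the associator $\alpha$, the unitors $\lambda,\rho$, and the symmetry $\sigma$ on simple tensors by the evident formulas (rebracketing, scalar multiplication, and swapping). Since simple tensors span a dense subspace of each tensor product, every equation between morphisms demanded by the axioms---functoriality, naturality, the pentagon, triangle and hexagon, and the dagger identities---need only be checked on simple tensors and then extended by linearity and continuity. Granting this, the only genuinely analytic content is to show that $\otimes$ is a functor at all, i.e.\ that $f\otimes g$ is again bounded (resp.\ adjointable); this is the step I expect to be the crux.

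First I would make $\otimes$ a bifunctor on $\cat{Hilb}_C\bd$. For bounded $C$-linear $f\colon E\to E'$ and $g\colon F\to F'$, put $(f\otimes g)(x\otimes y)=f(x)\otimes g(y)$ on the algebraic tensor product and extend to the completion once the estimate $\|f\otimes g\|\le\|f\|\,\|g\|$ is secured. Writing $f\otimes g=(f\otimes\id[F'])\circ(\id[E]\otimes g)$ reduces this to bounding $f\otimes\id$. For $z=\sum_i x_i\otimes y_i$ one has
\[
  \inprod{(f\otimes\id)z}{(f\otimes\id)z}=\sum_{i,j}\inprod{f(x_i)}{f(x_j)}\inprod{y_i}{y_j},
\]
so the desired $\inprod{(f\otimes\id)z}{(f\otimes\id)z}\le\|f\|^2\inprod{z}{z}$ amounts to positivity of $\sum_{i,j}M_{ij}\inprod{y_i}{y_j}$, where $M=[\,\|f\|^2\inprod{x_i}{x_j}-\inprod{f(x_i)}{f(x_j)}\,]\in M_n(C)$. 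Two ingredients suffice: (i) the matrix inequality $[\inprod{f(x_i)}{f(x_j)}]\le\|f\|^2[\inprod{x_i}{x_j}]$, giving $M\ge0$; and (ii) that over a commutative C*-algebra $C\cong C_0(X)$ the entrywise (Schur) product of the positive matrices $M$ and $[\inprod{y_i}{y_j}]$ is positive with positive entry-sum, by the classical Schur product theorem applied at each point of $X$. For (i), both Gram matrices are self-adjoint, so by Gelfand duality the inequality may be tested against scalar vectors $v\in\C^n$, where it becomes the single-element inequality $\inprod{f(w)}{f(w)}\le\|f\|^2\inprod{w}{w}$ for $w=\sum_i x_i v_i$; this is standard for adjointable $f$, and for merely bounded $f$ follows from $\|c^*\inprod{f(w)}{f(w)}c\|\le\|f\|^2\|c^*\inprod{w}{w}c\|$ by specialising $c=(\inprod{w}{w}+\varepsilon)^{-1/2}$ and letting $\varepsilon\to0$. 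Boundedness yields a functor on $\cat{Hilb}_C\bd$, and the computation $\inprod{(f\otimes g)(x\otimes y)}{x'\otimes y'}=\inprod{x\otimes y}{(f^\dag\otimes g^\dag)(x'\otimes y')}$ on simple tensors shows $f\otimes g$ is adjointable with $(f\otimes g)^\dag=f^\dag\otimes g^\dag$, giving a dagger functor on $\cat{Hilb}_C$.

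It remains to supply the coherence data. Each of $\alpha,\lambda,\rho,\sigma$ preserves inner products on simple tensors---for $\sigma(x\otimes y)=y\otimes x$ this is exactly where commutativity of $C$ enters, since $\inprod{y}{y'}\inprod{x}{x'}=\inprod{x}{x'}\inprod{y}{y'}$---so each is an inner-product-preserving linear bijection between dense subspaces and hence extends to a unitary isomorphism (surjectivity of $\lambda,\rho$ uses an approximate unit of $C$). Their naturality, and the pentagon, triangle and hexagon identities, hold on simple tensors because there they reduce to the corresponding identities for the algebraic tensor product over $\C$, and then hold everywhere by density. Combined with $(f\otimes g)^\dag=f^\dag\otimes g^\dag$ and the unitarity of the coherence isomorphisms, this shows $\cat{Hilb}_C\bd$ is symmetric monoidal and $\cat{Hilb}_C$ is a symmetric monoidal dagger category.

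The main obstacle, as flagged, is the boundedness estimate for $f\otimes\id$: everything else is a short computation on simple tensors or a routine density argument, whereas this step needs both the matrix form of the norm inequality and the positivity of Schur products over $C$, which is precisely where commutativity of the base algebra is indispensable.
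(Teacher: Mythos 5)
Your proof is correct and follows the same overall skeleton as the paper's: define the unit, associator, unitors and symmetry on simple tensors, extend by density, verify the dagger identity $(f\otimes g)^\dag=f^\dag\otimes g^\dag$ by the same computation on simple tensors, and establish unitarity of $\lambda_E,\rho_E$ via an approximate unit of $C$ (the paper's isometric-surjection computation for $\lambda_E$ is exactly your ``inner-product-preserving bijection between dense subspaces'' argument). Where you genuinely diverge is the step you flag as the crux: boundedness of $f\otimes g$ for merely bounded $f,g$. The paper simply asserts that $x\otimes y\mapsto f(x)\otimes g(y)$ admits a continuous linear extension, implicitly deferring to the standard tensor-product construction (Appendix~A and Lance's book), where such operators are usually only treated in the adjointable case. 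Your argument --- reducing to $f\otimes\id$ (and symmetrically $\id\otimes g$), testing the Gram-matrix inequality $[\inprod{f(x_i)}{f(x_j)}]\le\|f\|^2[\inprod{x_i}{x_j}]$ against scalar vectors, which is legitimate precisely because positivity in $\M_n(C_0(X))\simeq C_0(X,\M_n(\C))$ is pointwise, and then applying the Schur product theorem fibrewise to get positivity of $\sum_{i,j}M_{ij}\inprod{y_i}{y_j}$ --- is a complete, self-contained justification of this point, and it correctly isolates the two places where commutativity of $C$ is indispensable (the scalar-vector test and the Schur product). The $\varepsilon$-regularisation you use for the single-element inequality $\inprod{f(w)}{f(w)}\le\|f\|^2\inprod{w}{w}$ is the standard proof of the estimate the paper cites from Lance as Proposition~1.2. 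In short: same route, but your version supplies the one analytic detail the paper leaves to the reader, at the cost of some length.
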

\begin{proof}
  If $f \colon E_1 \to E_2$ and $g \colon F_1 \to F_2$ are bounded maps between Hilbert $C$-modules, we may define $f \otimes g \colon E_1 \otimes F_1 \to E_2 \otimes F_2$ as the continuous linear extension of $x \otimes y \mapsto f(x) \otimes g(y)$. If $f,g$ were adjointable, then $f \otimes g$ is adjointable with adjoint $f^\dag \otimes g^\dag$:
  \begin{align*}
    \inprod{(f \otimes g)(x_1 \otimes y_1)}{x_2 \otimes y_2}
    & = \inprod{f(x_1)}{y_1} \inprod{g(y_1)}{y_2} \\
    & = \inprod{x_1}{f^\dag(y_1)} \inprod{y_1}{g^\dag(y_2)} \\
    & = \inprod{x_1 \otimes x_2}{(f^\dag \otimes g^\dag)(x_2 \otimes y_2)}.
  \end{align*}
  Clearly $\id \otimes \id = \id$ and $(f \circ g) \otimes (h \circ k) = (f \otimes h) \circ (g \otimes k)$, making the tensor product into a functor $\cat{Hilb}_C\bd \times \cat{Hilb}_C\bd \to \cat{Hilb}_C\bd$.

  There are functions $\lambda_E \colon C \otimes E \to E$, $\rho_E \colon E \otimes C \to E$, and $\alpha_{E,F,G} \colon E \otimes (F \otimes G) \to (E \otimes F) \otimes G$, that continuously extend their algebraic counterparts. Thus they satisfy the pentagon and triangle equalities. It is clear that $\alpha_{E,F,G}$ is unitary, but this is not immediate for $\lambda_E$ and $\rho_E$. Recall the precise description of the tensor product in Appendix~\ref{sec:bimodules}: it involves the $*$-homomorphism $C \to \L(E)$ that sends $f$ to $x \mapsto xf$. This $*$-homomorphism is nondegenerate~\cite[page~5]{lance:hilbert}: if $f_n$ is an approximate unit for $C$, and $x \in E$, then 
  \[
    \lim_n \inprod{x-xf_n}{x-xf_n} 
    = \lim_n \inprod{x}{x} - f_n\inprod{x}{x} - \inprod{x}{x}f_n + f_n \inprod{x}{x} f_n = 0\text{,}
  \]
  so $EC$ is dense in $E$. Now $\lambda_E \colon C \otimes E \to E$ is defined by $f \otimes x \mapsto xf$.
  Therefore
  \begin{align*}
    \| \lambda_E(\sum f_i \otimes x_i) \|_E^2
    & = \| \sum x_if_i \|_E^2 \\
    & = \| \sum \inprod{x_if_i}{x_jf_j}_E \|_C \\
    & = \| \sum \inprod{x_i}{x_j}_E f_i^* f_j \|_C\\
    & = \| \sum \inprod{f_i \otimes x_i}{f_j \otimes x_j}_{C \otimes E} \|_C\\
    & = \| \sum f_i \otimes x_i \|_{C \otimes E}^2\text{,}
  \end{align*}
  so that $\lambda_E$ is an isometric surjection $C \otimes E \to E$, and hence unitary~\cite[Theorem~3.5]{lance:hilbert}.
  Similarly, there are unitaries $\sigma_{E,F} \colon E \otimes F \to F \otimes E$ satisfying the hexagon equality.
  Thus $\cat{Hilb}_C\bd$ and $\cat{Hilb}_C$ are symmetric monoidal with unit $C$.
\end{proof}

Next, we focus on additive structure in the category of Hilbert modules. 
A \emph{zero object} is an object that is initial and terminal at the same time. If a category has a zero object, there is a unique map $0 \colon E \to F$ that factors through the zero object between any two objects.
A category has finite \emph{biproducts} when it has a zero object and any two objects $E_1, E_2$ have a product and coproduct $E_1 \oplus E_2$ with projections $p_n \colon E_1 \oplus E_2 \to E_n$ and injections $i_n \colon E_n \to E_1 \oplus E_2$ satisfying $p_n \circ i_n = \id$ and $p_m \circ i_n=0$ for $m \neq n$. A dagger category has finite \emph{dagger biproducts} when it has finite biproducts and $i_n=p_n^\dag$.

\begin{lemma}\label{lem:biproducts}
  The category $\cat{Hilb}_C\bd$ has finite biproducts; $\cat{Hilb}_C$ has finite dagger biproducts.
\end{lemma}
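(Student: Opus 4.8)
The plan is to write down the zero object and the direct sum explicitly and then verify the biproduct axioms, treating the dagger refinement last.

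First I would take the zero object to be the trivial Hilbert module $0 = \{0\}$: the only bounded (equivalently adjointable) $C$-linear map into or out of it is the zero map, so it is at once initial and terminal. Next, for Hilbert $C$-modules $E_1$ and $E_2$ I would equip the algebraic direct sum $E_1 \oplus E_2$, with its componentwise right $C$-action, with the inner product $\inprod{(x_1,x_2)}{(y_1,y_2)} = \inprod{x_1}{y_1}_{E_1} + \inprod{x_2}{y_2}_{E_2}$. Conjugate symmetry, positivity, and definiteness follow termwise from the summands, the last because $\inprod{x_1}{x_1} + \inprod{x_2}{x_2} = 0$ forces each positive summand to vanish. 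The one genuinely analytic point is completeness: from $\inprod{x_i}{x_i} \leq \inprod{x_1}{x_1} + \inprod{x_2}{x_2}$ in $C$ one obtains $\|x_i\|_{E_i} \leq \|(x_1,x_2)\|$, so any Cauchy sequence in $E_1 \oplus E_2$ is componentwise Cauchy, and the componentwise limits reassemble into a limit in $E_1 \oplus E_2$.

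I would then define projections $p_n$ and injections $i_n$ in the evident way; each is bounded by the norm inequality above. Since the hom-sets of $\cat{Hilb}_C\bd$ carry pointwise addition making composition bilinear, it suffices to verify the biproduct identities $p_n \circ i_n = \id$, $p_m \circ i_n = 0$ for $m \neq n$, and $i_1 \circ p_1 + i_2 \circ p_2 = \id$, all of which are immediate. These make $E_1 \oplus E_2$ simultaneously the product and the coproduct, so $\cat{Hilb}_C\bd$ has finite biproducts.

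For the dagger statement it remains only to see that $p_n$ is adjointable with $p_n^\dag = i_n$. This is the one-line computation $\inprod{p_1(x_1,x_2)}{y}_{E_1} = \inprod{x_1}{y} = \inprod{(x_1,x_2)}{(y,0)} = \inprod{(x_1,x_2)}{i_1(y)}$, and symmetrically for $p_2$; as a sum of adjointable maps is adjointable, the same identities now take place in $\cat{Hilb}_C$. Hence $\cat{Hilb}_C$ has finite biproducts with $i_n = p_n^\dag$, that is, finite dagger biproducts. The only step that will need genuine care is completeness of the direct sum, which hinges on the positivity inequality in $C$; everything else is formal.
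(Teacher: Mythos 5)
Your proof is correct. It covers the same ground as the paper's --- zero object, componentwise direct sum, explicit injections and projections, and the adjointness computation $p_n^\dag = i_n$ --- but it differs in two places worth noting. First, where the paper simply cites the direct-sum construction from Lance and transfers the universal property along the forgetful functor to $\cat{Vect}$ (then checking that $f \oplus g$ remains bounded, respectively adjointable), you build the Hilbert module structure from scratch and prove completeness via the positivity inequality $\inprod{x_i}{x_i} \leq \inprod{x_1}{x_1} + \inprod{x_2}{x_2}$; that detail is correct and makes the argument self-contained. Second, you establish the universal property through the standard characterisation of biproducts in a category enriched in commutative monoids, verifying $p_n \circ i_n = \id$, $p_m \circ i_n = 0$, and $i_1 \circ p_1 + i_2 \circ p_2 = \id$. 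This has a small technical advantage over the paper's route: the mediating maps for the (co)product come out as explicit algebraic combinations $i_1 \circ f_1 + i_2 \circ f_2$ of morphisms already known to be bounded or adjointable, so there is nothing further to check, whereas lifting the universal property from $\cat{Vect}$ still requires one to confirm that the $\cat{Vect}$-mediating map is a morphism of Hilbert modules. The price is that you must know (or prove) the enrichment lemma; the paper's approach avoids invoking it.
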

\begin{proof}
  Clearly the zero-dimensional Hilbert $C$-module $\{0\}$ is simultaneously an initial and terminal object.
  Binary direct sums~\cite[p5]{lance:hilbert} are well-defined Hilbert $C$-modules. Since the category $\cat{Vect}$ of vector space has finite biproducts, the universal property is satisfied via the forgetful functor $\cat{Hilb}_C\bd \to \cat{Vect}$, and it suffices to show that direct sums are well-defined on morphisms. Clearly, if $f$ and $g$ are bounded, then so is $f \oplus g$. Similarly, $f$ and $g$ are adjointable maps between Hilbert $C$-modules, so is $f \oplus g$:
  \begin{align*}
    \inprod{(f \oplus g)(x_1,y_1)}{(x_2,y_2)} 
    & = \inprod{f(x_1)}{x_2} + \inprod{g(y_1)}{y_2} \\
    & = \inprod{x_1}{f^\dag(x_2)} + \inprod{y_1}{g^\dag(y_2)} \\
    & = \inprod{(x_1,y_1)}{(f^\dag \oplus g^\dag)(x_2,y_2)}.
  \end{align*}
  Finally, the injections $E \to E \oplus F$ given by $x \mapsto (x,0)$ are clearly adjoint to the projections $E \oplus F \to E$ given by $(x,y) \mapsto x$.
\end{proof}

To conclude this preliminary section, we discuss an important aspect of the theory of Hilbert modules called localization, and show that it, too, behaves well categorically. 
Can we turn a Hilbert $C$-module into a Hilbert $D$-module? It turns out that such a change of base needs not just a map $D \to C$ to alter scalar multiplication, but also a map $C \to D$ to alter inner products.
Recall that the \emph{multiplier algebra} of a C*-algebra $A$ is the unital C*-algebra $M(A)=\cat{Hilb}_A(A,A)$, that there is an inclusion $\iota \colon A \hookrightarrow M(A)$, and that any completely positive linear map $f \colon A\to B$ extends to $M(f) \colon M(A) \to M(B)$, see~\cite[page~15]{lance:hilbert}.
A $*$-homomorphism $f \colon A \to M(B)$ is \emph{nondegenerate} when $f(A)B$ is dense in $B$.
If $A$ is already unital then $M(A)=A$.

\begin{definition}\label{def:conditionalexpectation}
  A \emph{conditional expectation} between C*-algebras $A \to B$ consists of a nondegenerate $*$-homomorphism $g \colon B \rightarrowtail M(A)$ and a completely positive linear map $f \colon A \twoheadrightarrow B$ satisfying $M(f) \circ g = \iota$. 
  A conditional expectation is \emph{strict} when $f(ab)=0$ implies $f(a)f(b)=0$ for all positive $a,b \in A$.
\end{definition}

See also Appendix~\ref{sec:radon}.

\begin{proposition}[Localization]\label{prop:localization}
  Let $f \colon C \twoheadrightarrow D$ be a conditional expectation of a unital commutative C*-algebra $C$ onto a unital commutative subalgebra $D \subseteq C$.
  There is a functor $\Loc_f \colon \cat{Hilb}_C\bd \to \cat{Hilb}_D\bd$, that sends an object $E$ to the completion of $E / N_f^E$, where $E$ is a pre-inner product $D$-module by $\inprod{x}{y}_D = f(\inprod{x}{y}_C)$, and $N_f^E = \{ x \in E \mid \inprod{x}{x}_D=0\}$.
  If $f$ is strict then it is (strong) monoidal and restricts to a dagger functor $\Loc_f \colon \cat{Hilb}_C \to \cat{Hilb}_D$. 
\end{proposition}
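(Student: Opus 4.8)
The plan is to build the functor first on bounded maps, with no strictness assumption, and then to extract the monoidal and dagger structure from strictness. For the object part one checks that $\inprod{x}{y}_D = f(\inprod{x}{y}_C)$ makes $E$ a pre-inner-product $D$-module: conjugate-symmetry and positivity hold because $f$, being completely positive, is self-adjoint and positive, while $D$-linearity in the second variable holds because $f$ is a $D$-bimodule map (a standard property of conditional expectations). Cauchy–Schwarz then shows $N_f^E$ is a $D$-submodule on which the seminorm vanishes, so $E/N_f^E$ is a genuine inner-product $D$-module whose completion $\Loc_f(E)$ is a Hilbert $D$-module.

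For the morphism part the essential point is the operator inequality $\inprod{g(x)}{g(x)}_C \le \|g\|^2 \inprod{x}{x}_C$ for every bounded $C$-linear $g \colon E \to F$. I would prove it using commutativity and the unit of $C$: setting $a_\epsilon = \inprod{x}{x}_C + \epsilon 1$ and $z = x a_\epsilon^{-1/2}$, one computes $\inprod{z}{z}_C = 1 - \epsilon a_\epsilon^{-1} \le 1$, hence $\|g(z)\| \le \|g\|$; since $g(z) = g(x) a_\epsilon^{-1/2}$ and all elements commute, this gives $a_\epsilon^{-1/2}\inprod{g(x)}{g(x)}_C a_\epsilon^{-1/2} \le \|g\|^2 1$, and multiplying by $a_\epsilon^{1/2}$ and letting $\epsilon \to 0$ yields the claim. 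Applying the positive map $f$ gives $\inprod{g(x)}{g(x)}_D \le \|g\|^2 \inprod{x}{x}_D$, which simultaneously shows that $g$ maps $N_f^E$ into $N_f^F$ and is bounded for the $D$-norms; hence $g$ descends to $E/N_f^E \to F/N_f^F$ and extends continuously to $\Loc_f(g)$. Functoriality is immediate on the dense subspaces and passes to the completions.

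The heart of the matter is that strictness forces $f$ to be a $*$-homomorphism, and I expect this lemma to be the main obstacle; the rest is bookkeeping. I would prove $f(a^2) = f(a)^2$ for self-adjoint $a$ as follows. Put $b = a - f(a)$, so that $f(b) = 0$, and decompose $b = b_+ - b_-$ into orthogonal positive parts with $b_+ b_- = 0$. From $f(b) = 0$ we get $f(b_+) = f(b_-)$, while $f(b_+ b_-) = 0$ together with strictness gives $f(b_+) f(b_-) = 0$, so $f(b_+)^2 = 0$ and hence $f(b_\pm) = 0$. For any positive $c$ with $f(c) = 0$ the order estimate $c^2 \le \|c\| c$ forces $0 \le f(c^2) \le \|c\| f(c) = 0$; applied to $b_\pm$ this yields $f(b^2) = f(b_+^2) + f(b_-^2) = 0$. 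Expanding $b^2$ and using the $D$-bimodule property then gives $f(a^2) - f(a)^2 = f(b^2) = 0$, and polarization upgrades this to $f(ab) = f(a)f(b)$.

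Granting multiplicativity, strong monoidality follows by comparing inner products on elementary tensors. The unit comparison $\Loc_f(C) \to D$, $[a] \mapsto f(a)$, is well defined and isometric because $\inprod{f(a)}{f(b)}_D = f(a)^* f(b) = f(a^* b) = \inprod{a}{b}_D$, and it is onto since $f|_D = \id$. The tensor comparison $\Loc_f(E) \otimes \Loc_f(F) \to \Loc_f(E \otimes F)$, $[x] \otimes [y] \mapsto [x \otimes y]$, is isometric because $f(\inprod{x_1}{x_2}_C) f(\inprod{y_1}{y_2}_C) = f(\inprod{x_1}{x_2}_C \inprod{y_1}{y_2}_C)$; both maps are unitary, being isometries with dense range, and naturality together with the associativity, unit, and symmetry diagrams hold because every structure map of Proposition~\ref{prop:monoidal} acts in the evident way on elementary tensors. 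Finally, for the dagger part one restricts to adjointable $g$ (which are in particular bounded, so $\Loc_f(g)$ is already defined) and checks $\inprod{\Loc_f(g)[x]}{[y]}_D = f(\inprod{g(x)}{y}_C) = f(\inprod{x}{g^\dag(y)}_C) = \inprod{[x]}{\Loc_f(g^\dag)[y]}_D$, so that $\Loc_f(g)^\dag = \Loc_f(g^\dag)$ and $\Loc_f \colon \cat{Hilb}_C \to \cat{Hilb}_D$ is a dagger functor.
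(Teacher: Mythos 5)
Your proof is correct, and while its overall skeleton (quotient--complete on objects, descend morphisms via a norm inequality, then check dagger and the two monoidal comparison maps) matches the paper's, the treatment of strictness is genuinely different and in fact tighter. The paper applies strictness in an ad hoc way, silently invoking identities of the form $f(ab)=f(a)f(b)$ for positive $a,b$ even though the definition of strictness only says $f(ab)=0$ implies $f(a)f(b)=0$. You instead isolate and prove the key lemma that a strict conditional expectation onto $D$ is automatically multiplicative: the decomposition $b=b_+-b_-$ of $b=a-f(a)$, the deduction $f(b_\pm)=0$ from strictness, the order estimate $c^2\leq\|c\|c$, and the $D$-bimodule property together give $f(a^2)=f(a)^2$, and polarization finishes. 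This both justifies every subsequent use of $f(\inprod{x_1}{x_2}\inprod{y_1}{y_2})=f(\inprod{x_1}{x_2})f(\inprod{y_1}{y_2})$ and replaces the paper's separate Schwarz-inequality/Tomiyama argument for $c-f(c)\in N_f^C$ in the unit comparison. Two smaller divergences: you prove the inequality $\inprod{g(x)}{g(x)}_C\leq\|g\|^2\inprod{x}{x}_C$ directly via the $a_\varepsilon^{-1/2}$ trick where the paper cites Lance, and you obtain unitarity of the comparison maps from ``isometry with dense range'' (via the same surjective-isometry fact the paper uses in Proposition~\ref{prop:monoidal}) rather than exhibiting explicit adjoints. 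Both routes are valid; yours is more self-contained and closes a small gap in the published argument. Note only that your appeal to the $D$-bimodule property of $f$ rests on the same Tomiyama-type fact the paper cites, so it should be stated as such rather than left entirely implicit.
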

The functor $\Loc_f$ is called \emph{localization}~\cite[p57]{lance:hilbert}.
\begin{proof}
  On a morphism $g \colon E \to F$, the functor acts as follows.
  For $x \in E$, notice that $0 \leq |g(x)|^2 \leq \|f\|^2 |x|^2$ by~\cite[Proposition~1.2]{lance:hilbert}.
  Hence $g(N_f^E) \subseteq N_f^F$, making the function $E / N_f^E \to F / N_f^F$ given by $x + N_f^E \mapsto g(x) + N_f^F$ well-defined; define its continuous extension to be $\Loc_f(g)$. 

  This clearly respects identity morphisms and composition, making $\Loc_f$ a well-defined functor. 
  It also preserves daggers when they are available:
  \begin{align*}
    \inprod{ \Loc_f(g)(x+N_f^E)}{y + N_f^F}_{\Loc_f(F)}    
    & = f \big( \inprod{g(x)+N_f^F}{y + N_f^F}_{F} \big) \\
    & = f \big( \inprod{x+n_f^E}{g^\dag(y)+N_f^F}_{E} \big) \\
    & = \inprod{ x+n_f^E}{ \Loc_f(g^\dag)(y+N_f^F)}_{\Loc_f(E)}.
  \end{align*}

  To show that $\Loc_f$ is (strong) monoidal, we have to exhibit unitaries $D \to \Loc_f(C)$ and $\Loc_f(E) \otimes \Loc_f(F) \to \Loc_f(E \otimes F)$. For the latter, take $(x+N_f^E) \otimes (y+N_f^F) \mapsto x \otimes y + N_f^{E \otimes F}$.
  This is well-defined because $f$ is strict: if $x+N_f^E=0$, that is $f(\inprod{x}{x}_C)=0$, then $f(\inprod{x \otimes y}{x \otimes y}_C)=f(\inprod{x}{x}_C \inprod{y}{y}_C) = f(\inprod{x}{x}_C) f(\inprod{y}{y}_C) = 0$ for any $y \in F$, and so $x \otimes y \in N_f^{E \otimes F}$.
  The adjoint of this map is given by $x \otimes y + N_f^{E \otimes F} \mapsto (x + N_f^E) \otimes (y + N_f^F)$:
  \begin{align*}
    & \inprod{(x_1 + N_f^E) \otimes (y_1 + N_f^F)}{(x_2 + N_f^E) \otimes (y_2 + N_f^F)}_{\Loc_f(E)\otimes\Loc_f(F)} \\
    & = f(\inprod{x_1}{x_2}_E) \cdot f(\inprod{y_1}{y_2}_F) \\
    & = f(\inprod{x_1}{x_2}_E \cdot \inprod{y_1}{y_2}_F) \\
    & = \inprod{x_1 \otimes y_1 + N_f^{E \otimes F}}{x_2 \otimes y_2 + N_f^{E \otimes F}}_{\Loc_f(E \otimes F)}.
  \end{align*}
  This is well-defined again because $f$ is strict: if $x \otimes y \in N_f^{E \otimes F}$, that is $f(\inprod{x}{x}_C \inprod{y}{y}_C)=0$, then also $\inprod{(x+N_f^E) \otimes (y+N_f^F)}{(x+N_f^E) \otimes (y+N_f^F)}=f(\inprod{x}{x}_C) f(\inprod{y}{y}_C)=f(\inprod{x}{x}_C \inprod{y}{y}_C)=0$.
  These maps are clearly each others inverse. 

  For the unitary map $D \to \Loc_f(C)$, recall that $\Loc_f(C)$ is the completion of $C / N_f^C$ with $\inprod{c}{c'} = f(c^*c')$ and $N_f^C = \{c \in C \mid f(c^*c)=0\}$.
  Consider the map $D \to \Loc_f(C)$ given by $d \mapsto d + N_f^C$, and the map $\Loc_f(C) \to D$ given by $c+n_f^C \mapsto f(c)$. The latter is well-defined as $c-c' \in N_f^C$ implies $f(c-c')^*f(c-c')=0$ and hence $f(c)=f(c')$. 
  They are adjoint because $f$ is $D$-linear:
  \[
    \inprod{d}{f(c)}_D
    = d^* f(c)
    = f(d^* c)
    = \inprod{d}{c+N_f^C}_{\Loc_f(C)}.
  \]
  Finally, they are inverses: on the one hand $f(d)=d$ for $d \in D$; on the other hand and $c-f(c) \in N_f^C$ since
  \begin{align*}
    f( (c-f(c))^* (c-f(c))) 
    & = f(c^*c) - f(f(c)^* c) - f(c^*f(c)) + f(f(c)^* f(c)) \\
    & = f(c^*c) - f(c)^* f(c) = 0
  \end{align*}
  by the Schwartz inequality for completely positive maps~\cite[Exercise~3.4]{paulsen:positive} and~\cite[Theorem~1]{tomiyama:conditionalexpectation}.
  The required coherence diagrams are easily seen to commute. Thus $\Loc_f$ is a (strong) monoidal functor.
\end{proof}

\begin{remark}
  Not every conditional expectation is strict. For example, take $C=\C^2$, and regard $D=\C$ as a subalgebra of $C$ via $z \mapsto (z,z)$. Then $f(u,v)=u+v$ defines a conditional expectation $f\colon C \twoheadrightarrow D$. But taking $a=(1,0)$, and $b=(0,1)$ 
  shows that $f(ab)=f(0,0)=0$ but $f(a)f(b)=1 \cdot 1 = 1 \neq 0$.
  Hence for $E=F=C$, the canonical map $\Loc_f(E) \otimes \Loc_f(F) \to \Loc_f(E \otimes F)$ is not adjointable, that is, not a morphism $\cat{Hilb}_{C(X)}$. 
\end{remark}

We will be using Urysohn's lemma for locally compact spaces often~\cite[2.12]{rudin:analysis}.

\begin{lemma}[Urysohn]
  If $X$ is a locally compact Hausdorff space, and $K \subseteq V \subseteq X$ with $K$ compact and $V$ open, then there exists a continuous function $\varphi \colon X \to [0,1]$ that is 1 on $K$ and is 0 outside a compact subset of $V$.
  \qed
\end{lemma}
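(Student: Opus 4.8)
The plan is to reduce the locally compact case to the classical Urysohn lemma for normal spaces, with the bridge being a \emph{sandwiching} step. First I would produce an open set $W$ with compact closure satisfying $K \subseteq W \subseteq \overline{W} \subseteq V$. Using local compactness together with the Hausdorff property, each point of $K$ has an open neighborhood whose closure is compact and contained in $V$; by compactness of $K$ finitely many such neighborhoods cover it, and I would take $W$ to be their union, so that $\overline{W}$ is a finite union of compact sets inside $V$, hence itself a compact subset of $V$.

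Next I would exploit that $\overline{W}$, being compact and Hausdorff, is normal. Inside $\overline{W}$ the set $K$ is closed (it is compact in a Hausdorff space) and $\partial W := \overline{W} \setminus W$ is closed (since $W$ is open), and the two are disjoint because $K \subseteq W$. The classical Urysohn lemma, applied to the normal space $\overline{W}$, then yields a continuous map $\psi \colon \overline{W} \to [0,1]$ with $\psi = 1$ on $K$ and $\psi = 0$ on $\partial W$.

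Finally I would extend $\psi$ by zero: define $\varphi$ to equal $\psi$ on $\overline{W}$ and $0$ on $X \setminus W$. The two closed sets $\overline{W}$ and $X \setminus W$ cover $X$ and overlap exactly in $\partial W$, where $\psi$ already vanishes, so the two prescriptions agree there and the pasting lemma makes $\varphi$ continuous on all of $X$. By construction $0 \le \varphi \le 1$ and $\varphi = 1$ on $K$, while $\varphi$ vanishes off $W$ and hence off $\overline{W}$, which is a compact subset of $V$; this is exactly the claimed behaviour.

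The step I expect to be the main obstacle is the sandwiching lemma, since this is precisely where local compactness (as opposed to mere Hausdorffness) enters, and where one must take care that the inserted set has compact closure still contained in $V$; the normal-space Urysohn input and the pasting are then routine. If the classical lemma were not to be assumed, I would instead take the fully self-contained route: interpolate a nested family of open sets $V_r$ indexed by the rationals $r \in [0,1]$ satisfying $\overline{V_s} \subseteq V_r$ whenever $r < s$, building each $V_r$ by applying the sandwiching lemma at successive dyadic stages, and then set $\varphi(x) = \sup\{ r : x \in V_r \}$, checking continuity through the usual semicontinuity argument.
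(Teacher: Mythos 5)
Your proof is correct. The paper itself gives no argument for this lemma --- it is stated with a reference to Rudin~[2.12] and closed with \verb|\qed| --- so there is no in-paper proof to compare against; your write-up supplies the standard argument. The sandwiching step (each $x\in K$ has an open neighbourhood with compact closure contained in $V$; take a finite subcover and let $W$ be the union, so $\overline{W}$ is a finite union of compact sets and hence a compact subset of $V$) is exactly where local compactness is used, and your subsequent reduction to the classical Urysohn lemma on the normal space $\overline{W}$, separating $K$ from $\overline{W}\setminus W$ and extending by zero via the pasting lemma, is sound: the two closed pieces $\overline{W}$ and $X\setminus W$ cover $X$ and both prescriptions vanish on the overlap $\overline{W}\setminus W$. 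This differs only cosmetically from Rudin's own proof, which interpolates the nested family of open sets directly (your suggested fallback); both routes are fine, and yours has the advantage of quoting the normal-space case as a black box.
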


\begin{example}
  Any point $t$ in a locally compact Hausdorff space $X$ gives rise to a strict conditional expectation as follows. The completely positive map $f \colon C_0(X) \to \C$ evaluates at $t$.
  The $*$-homomorphism $g \colon \C \to M(C_0(X))$ is determined by $g(z)(\varphi)=z\varphi$.
  This clearly satisfies $M(f) \circ g(z)=z$, and is strict because $f$ is multiplicative.
  This \emph{localization at $t \in X$} is the setting Proposition~\ref{prop:localization} will be applied in below. 
\end{example}

\begin{remark}\label{rem:tietze}
  We will also use the previous lemma in the form of Tietze's extension theorem: 
  if $X$ is a locally compact Hausdorff space, and $K \subseteq X$ compact, then any function in $C(K)$ extends to a function in $C_0(X)$.
\end{remark}

\section{Scalars}\label{sec:scalars}

In this section, we investigate how much of the base space internalizes to the category of Hilbert modules over it. It will turn out that we need to look at morphisms into the tensor unit.

Can we get more information about $X$ from $\cat{Hilb}_{C_0(X)}$ by purely categorical means?
We first investigate \emph{scalars}: endomorphisms $I \to I$ of the tensor unit in a monoidal category. They form a commutative monoid. In the presence of biproducts, they form a semiring, and in the presence of a dagger, they pick up an involution~\cite{heunenvicary:cqm}. 

\begin{lemma}\label{lem:stonecechscalars}
  If $X$ is a locally compact Hausdorff space, there is a $*$-isomorphism between scalars of $\cat{Hilb}_{C_0(X)}$ and $C_b(X)$, the bounded continuous complex-valued functions on $X$. The same holds for $\cat{Hilb}_{C_0(X)}\bd$. 
\end{lemma}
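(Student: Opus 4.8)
The plan is to observe first that the tensor unit of $\cat{Hilb}_{C_0(X)}$ is $C_0(X)$ regarded as a Hilbert module over itself, with inner product $\inprod{\varphi}{\psi} = \varphi^*\psi$. Hence the scalars are by definition $\cat{Hilb}_{C_0(X)}(C_0(X),C_0(X))$, which is precisely the multiplier algebra $M(C_0(X))$ recalled above; in the bounded case they are instead the bounded $C_0(X)$-linear endomorphisms of $C_0(X)$. The lemma thus reduces to the identification $M(C_0(X)) \cong C_b(X)$, and I would construct the $*$-isomorphism explicitly rather than merely cite it, since I want to see along the way that the adjointable and the merely bounded endomorphisms of $C_0(X)$ coincide.

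Second, I would dispatch the easy direction by sending $b \in C_b(X)$ to the multiplication map $m_b \colon \varphi \mapsto b\varphi$. This is visibly $C_0(X)$-linear and bounded with $\|m_b\| \leq \|b\|_\infty$; it is adjointable with $m_b^\dag = m_{\bar b}$, because $\inprod{b\varphi}{\psi} = \overline{b\varphi}\,\psi = \bar\varphi\,(\bar b\psi) = \inprod{\varphi}{\bar b\psi}$; and $b \mapsto m_b$ is a unital ring homomorphism sending complex conjugation to the dagger. So it is a $*$-homomorphism into the scalars, and I would note it is isometric, hence injective.

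Third, and this is the crux, I would prove surjectivity by reconstructing $b$ pointwise from an arbitrary bounded $C_0(X)$-linear $f$. Since $X$ is only locally compact, $C_0(X)$ has no unit, so I cannot set $b = f(1)$; instead, for each $t \in X$ I use Urysohn to pick $\varphi \in C_0(X)$ with $\varphi(t) \neq 0$ and put $b(t) = f(\varphi)(t)/\varphi(t)$. Linearity and commutativity give $f(\varphi)\psi = f(\varphi\psi) = f(\psi)\varphi$, whence $f(\varphi)(t)\psi(t) = f(\psi)(t)\varphi(t)$, so $b(t)$ does not depend on the chosen $\varphi$. On a neighbourhood of $t$ where $\varphi$ is nonvanishing, $b = f(\varphi)/\varphi$ is a quotient of continuous functions with nonvanishing denominator, so $b$ is continuous, and $f(\varphi) = b\varphi$ for every $\varphi$ by construction, giving $f = m_b$.

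Finally I would show boundedness: arguing by contradiction, if $|b| > \|f\|$ somewhere, a normalized bump $\varphi$ supported near that point forces $\|f(\varphi)\|_\infty = \|b\varphi\|_\infty > \|f\|\,\|\varphi\|_\infty$, so in fact $\|b\|_\infty \leq \|f\|$ and $b \in C_b(X)$. This completes surjectivity and yields an isometric $*$-isomorphism. The identical $b$ works in the bounded category, and since $m_b$ is automatically adjointable, the bounded and adjointable scalar rings coincide, which gives the last sentence. I expect the main obstacle to be exactly this non-unital bookkeeping: making $b$ well-defined, continuous, and bounded from local Urysohn functions rather than from a global unit.
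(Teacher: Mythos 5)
Your proof is correct, but it takes a different route from the paper's. The paper argues structurally: it shows that $C_0(X)$ sits inside the scalar algebra $\L(C_0(X))$ as an essential ideal (the ideal property via $g=s(f)$, essentiality via $g=s(f)^*$), and then invokes the general identification of $\L(A)$ with the multiplier algebra $M(A)$ together with the standard computation $M(C_0(X))\cong C_b(X)$ from Lance. You instead build the isomorphism by hand: the easy embedding $b\mapsto m_b$, and then a pointwise reconstruction $b(t)=f(\varphi)(t)/\varphi(t)$ from Urysohn functions, with well-definedness and vanishing at zeros of $\varphi$ both falling out of the module identity $f(\varphi)\psi=f(\varphi\psi)=f(\psi)\varphi$, continuity from local nonvanishing of $\varphi$, and boundedness by a bump-function contradiction. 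What your approach buys is self-containedness and, as a genuine bonus, a direct proof that every \emph{bounded} $C_0(X)$-linear endomorphism is automatically adjointable (since it is some $m_b$ with adjoint $m_{\bar b}$), which is exactly the content of the lemma's last sentence and is handled rather tersely in the paper. What the paper's approach buys is brevity and the conceptual point, used again later, that the scalars of $\cat{Hilb}_A$ are always $M(A)$. The only elision in your write-up is the phrase ``$f(\varphi)=b\varphi$ for every $\varphi$ by construction'': at a point $t$ where $\varphi(t)=0$ you still need $f(\varphi)(t)=0$, but this follows from the same identity $f(\varphi)(t)\psi(t)=f(\psi)(t)\varphi(t)$ applied with a $\psi$ not vanishing at $t$, so it is a one-line fix rather than a gap.
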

\begin{proof}
  Recall that a closed ideal $I \subseteq A$ of a C*-algebra is \emph{essential} when $aI=\{0\}$ implies $a=0$ for all $a \in A$.
  We claim that $C_0(X)$ is an essential ideal of the C*-algebra $\L(C_0(X))$ of scalars of $\cat{Hilb}_{C_0(X)}$.
  Seeing that $C_0(X)$ is an ideal in $\L(C_0(X))$ comes down to showing that for each $f \in C_0(X)$ and scalar $s \in \L(C_0(X))$, there exists $g \in C_0(X)$ such that for all $h \in C_0(X)$ we have $hg=s(h)f$; choose $g=s(f)$.
  Seeing that the ideal is essential comes down to showing that for each scalar $s \in \L(C_0(X))$, if $s(f)g=0$ for all $f,g \in C_0(X)$, then $s=0$; given $f \in C_0(X)$, choosing $g = s(f)^*$ shows that $s(f)^*s(f)=0$ implies $\|s(f)\|^2=0$ and hence $s(f)=0$.
  It follows that the scalars of $\cat{Hilb}_{C_0(X)}$ are precisely the multiplier algebra of $C_0(X)$, which is $C_b(X)$, see~\cite[page~14--15]{lance:hilbert}.
\end{proof}

It follows that for compact $X$, the scalars in $\cat{Hilb}_{C(X)}$ simply form $C(X)$ itself: any $f \in C(X)$ gives a scalar by multiplication, and all scalars arise that way.

\begin{remark}
  If $A$ is a noncommutative C*-algebra, then $\cat{Hilb}_A$ is a perfectly well-defined dagger category. However, it cannot be monoidal with $A$ as monoidal unit. That is, Proposition~\ref{prop:monoidal} does not generalise to noncommutative $A$.
  After all, there is an injective monoid homomorphism $A \hookrightarrow \cat{Hilb}_A(A,A)$ that sends $a$ to $b \mapsto ba$, which contradicts commutativity of the latter monoid~\cite[Proposition~6.1]{kellylaplaza:compactcategories}.
\end{remark}

Next we investigate subobjects. A \emph{(dagger) subobject} of $E$ is a monomorphism $u \colon U \to E$ (satisfying $u^\dag \circ u = \id$) considered up to isomorphism of $U$.

\begin{lemma}\label{lem:daggersubobjects}
  There is an isomorphism of partially ordered sets between clopen subsets of a locally compact Hausdorff space $X$ and (dagger) subobjects of the tensor unit $C_0(X)$ in $\cat{Hilb}_{C_0(X)}$.
\end{lemma}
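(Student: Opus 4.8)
The plan is to route the whole correspondence through the projections of the scalar C*-algebra, exploiting Lemma~\ref{lem:stonecechscalars}. First I would build the map from clopen subsets to dagger subobjects. For clopen $U \subseteq X$ the indicator $\chi_U$ is bounded and continuous, so $C_0(U) = \{f \in C_0(X) \mid f = \chi_U f\}$ is a closed submodule of the tensor unit, and I claim the inclusion $\iota_U \colon C_0(U) \to C_0(X)$ is a dagger mono. A short computation with $\inprod{-}{-}$ shows its adjoint is multiplication by $\chi_U$ (which lands in $C_0(U)$ precisely because $U$ is clopen), whence $\iota_U^\dag \circ \iota_U = \id$, while $\iota_U \circ \iota_U^\dag$ is multiplication by $\chi_U$. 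So $U \mapsto [\iota_U]$ is a well-defined map into dagger subobjects of $C_0(X)$.

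For the reverse map, given a dagger mono $u \colon U \to C_0(X)$ I would form the endomorphism $p = u \circ u^\dag$ of the tensor unit. Since $u^\dag \circ u = \id$, it is self-adjoint and idempotent, hence a projection in the scalar algebra $\L(C_0(X))$, which Lemma~\ref{lem:stonecechscalars} identifies with $C_b(X)$. Any projection $p \in C_b(X)$ satisfies $p = p^* = p^2$, so it is real-valued with values in $\{0,1\}$; being continuous it must be the indicator $\chi_{U_p}$ of the clopen set $U_p = p^{-1}(\{1\})$. This assigns a clopen subset $U_p$ to each dagger subobject $[u]$.

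Next I would check that the two assignments are mutually inverse and order-preserving. Starting from $U$ one computes $\iota_U \circ \iota_U^\dag = \chi_U$, returning $U$; starting from $[u]$, the relation $u^\dag u = \id$ gives $p\,u = u$, so $u$ corestricts to a unitary $U \cong \chi_{U_p} C_0(X) = C_0(U_p)$ intertwining $u$ with $\iota_{U_p}$, whence $[u] = [\iota_{U_p}]$. For the order, $[u_1] \le [u_2]$ iff $u_1$ factors through $u_2$ iff $p_1 \le p_2$ as projections iff $U_{p_1} \subseteq U_{p_2}$, which matches inclusion of clopen sets; hence the bijection is an isomorphism of partially ordered sets.

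The step I expect to be the main obstacle is the surjectivity of the projection description: that every dagger subobject really is captured by the scalar $u \circ u^\dag$, and that every projection of $C_b(X)$ is realised. The first half rests on the unitary corestriction above, and the second on Lemma~\ref{lem:stonecechscalars} together with the elementary observation that a continuous idempotent is the indicator of a clopen set. A subtler point, if one asks for plain subobjects rather than dagger ones, is whether a general monomorphism into the unit is equivalent to a dagger mono; I would treat the dagger case as primary, since it is exactly there that the self-duality of the unit $C_0(X)$ makes the self-adjoint idempotent $u \circ u^\dag$ available and the argument becomes clean.
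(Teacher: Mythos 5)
Your treatment of the dagger half is correct, and it takes a genuinely different route from the paper's: you pass through the projection lattice of the scalar algebra, using Lemma~\ref{lem:stonecechscalars} to identify the self-adjoint idempotent scalar $u \circ u^\dag$ with a continuous $\{0,1\}$-valued function, hence the indicator of a clopen set, and your unitary corestriction argument correctly shows every isometric mono into $C_0(X)$ is equivalent to some $\iota_U$; the order-isomorphism via $p_1 \le p_2$ also goes through. The paper instead works with complemented closed submodules: the image of a complemented subobject is a closed ideal $\{f \mid f(V)=0\}$ for a closed $V$, and the decomposition $C_0(X)=E\oplus E^\perp$ forces $V$ to be clopen. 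For the dagger case your version is arguably cleaner.

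The genuine gap is the plain-subobject half of the statement, which you explicitly defer. The parenthetical ``(dagger)'' in the lemma, read against the paper's definition of subobject, asserts the correspondence both for arbitrary monomorphisms $u \colon U \rightarrowtail C_0(X)$ in $\cat{Hilb}_{C_0(X)}$ and for isometric ones, so your argument proves only half of the claim. This half is not automatic: for a plain adjointable mono $u$, the operator $u^\dag \circ u$ is positive and injective but need not be invertible, so $u$ need not have closed range and you cannot form the projection $u \circ u^\dag$ that your whole argument hinges on. The paper closes this by showing that every subobject of $C_0(X)$ is complemented: its image is a closed ideal $\{f \mid f(\overline{U})=0\}$, and an Urysohn-plus-approximate-unit argument shows the inclusion of such a submodule is adjointable only when $\overline{U}$ is clopen, at which point the submodule is an orthogonal direct summand and the subobject is dagger. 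Some argument of this kind is needed to obtain the stated poset isomorphism for plain subobjects.
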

\begin{proof}
  We will first establish a bijection between clopen subsets of $X$ and subobjects $E \rightarrowtail C_0(X)$ such that $C_0(X) = E \oplus E^\perp$.

  Given a clopen subset $U \subseteq X$, take $E=\{f \in C_0(X) \mid f(U)=0\}$. This is a well-defined Hilbert $C_0(X)$-module under the inherited inner product $\inprod{f}{g}=f^*g$. Then $E^\perp=\{f \in C_0(X) \mid f(X \setminus U)=0\}$, and indeed $C_0(X) = E \oplus E^\perp$.

  Conversely, the image of a complemented subobject $E \rightarrowtail C_0(X)$ is a closed ideal of $C_0(X)$, and hence is of the form $E=\{f \in C_0(X) \mid f(U)=0\}$ for a closed subset $U \subseteq X$. Because the same holds for $E^\perp$ and $C_0(X)=E \oplus E^\perp$, the closed subset $U$ must in fact be clopen.
  Taking into account that subobjects are defined up to isomorphism, these two constructions are each other's inverse.

  Finally, we prove that any subobject of $C_0(X)$ in $\cat{Hilb}_{C_0(X)}$ is complemented, so that every subobject is a dagger subobject by Lemma~\ref{lem:biproducts}. See also~\cite[Theorem~3.1]{frankpaulsen:injective}.
  If $U \subseteq X$ is arbitrary, $E=\{f \in C_0(X) \mid f(U)=0\}=\{f \mid f(\overline{U})=0\}$ is a well-defined object in $\cat{Hilb}_{C_0(X)}$,
  but the inclusion $i \colon E \hookrightarrow C_0(X)$ is not necessarily a well-defined morphism. Suppose $i$ were adjointable, so that $f(t)^* g(t)=f(t)^* i^\dag(g)(t)$ for all $t \in X$ and $f,g \in C_0(X)$ with $f(U)=0$. If $t \not \in \overline{U}$,  Urysohn's lemma provides a continuous function $f \colon X \to [0,1]$ such that $f(\overline{U})=0$ and $f(t)=1$. Hence $i^\dag(g)(t)=g(t)$ for $t \in X \setminus \overline{U}$. But to make $i^\dag$ well-defined, $i^\dag(g)(t)=0$ for $t \in \overline{U}$, and $i^\dag(g)$ must be continuous. Letting $g$ range over an approximate unit for $C_0(X)$ shows that $\overline{U}$ must be clopen.
\end{proof}

It follows that there is a bijection between the clopen subsets of a locally compact Hausdorff space $X$ and self-adjoint idempotent scalars in $\cat{Hilb}_{C_0(X)}$: a dagger subobject $f \colon E \rightarrowtail C_0(X)$ induces the scalar $s=f \circ f^\dag$, and conversely, the image of a self-adjoint idempotent scalar $s \colon C_0(X) \to C_0(X)$ is a C*-subalgebra $f \colon E \rightarrowtail C_0(X)$.

\begin{lemma}\label{lem:wellpointed}
  The monoidal categories $\cat{Hilb}_{C_0(X)}$ and $\cat{Hilb}_{C_0(X)}\bd$ are \emph{monoidally well-pointed}: if $f,g \colon E_1 \otimes E_2 \to F_1 \otimes F_2$ satisfy $f \circ (x \otimes y) = g \circ (x \otimes y)$ for all morphisms $x \colon C_0(X) \to E_1$ and $y \colon C_0(X) \to E_2$, then $f=g$.
\end{lemma}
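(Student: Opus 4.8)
The plan is to reduce the equality $f=g$ to agreement on a dense subset, exploiting that morphisms out of the tensor unit $C_0(X)$ are plentiful enough to reach every vector of a Hilbert module. The key observation is that each vector $\xi \in E_1$ determines a morphism $x_\xi \colon C_0(X) \to E_1$, namely right multiplication $c \mapsto \xi c$. I would check that $x_\xi$ is right $C_0(X)$-linear and adjointable, with adjoint $\eta \mapsto \inprod{\xi}{\eta}_{E_1}$: indeed $\inprod{x_\xi(c)}{\eta}_{E_1} = c^* \inprod{\xi}{\eta}_{E_1} = \inprod{c}{\inprod{\xi}{\eta}_{E_1}}_{C_0(X)}$. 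Since $x_\xi$ is also bounded by $\|\xi\|$, it is simultaneously a morphism of $\cat{Hilb}_{C_0(X)}$ and of $\cat{Hilb}_{C_0(X)}\bd$. The same construction yields $y_\eta \colon C_0(X) \to E_2$ for each $\eta \in E_2$.

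Next I would unwind the hypothesis. Because $C_0(X)$ is the monoidal unit, an elementary tensor $c \otimes d$ satisfies $(x \otimes y)(c \otimes d) = x(c) \otimes y(d)$, so the assumption $f \circ (x \otimes y) = g \circ (x \otimes y)$ specialises, with $x=x_\xi$ and $y=y_\eta$, to
\[
  f(\xi c \otimes \eta d) = g(\xi c \otimes \eta d)
\]
for all $\xi \in E_1$, $\eta \in E_2$, and $c,d \in C_0(X)$.

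Then I would argue that these vectors are dense. By nondegeneracy, established in the proof of Proposition~\ref{prop:monoidal} (showing $E\,C_0(X)$ dense in $E$), the set $\{\xi c \mid \xi \in E_1,\ c \in C_0(X)\}$ is dense in $E_1$, and likewise $\{\eta d\}$ is dense in $E_2$. Continuity of $\otimes$ then exhibits every simple tensor $\xi' \otimes \eta'$ as a limit of vectors $\xi c \otimes \eta d$, so the closed linear span of $\{\xi c \otimes \eta d\}$ contains the algebraic tensor product $E_1 \otimes_{\C} E_2$, which is dense in $E_1 \otimes E_2$ by construction. As $f$ and $g$ are bounded, linear, and agree on this dense set, they coincide.

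The only genuinely delicate point is the first step: recognising that right multiplication by a fixed vector is an honest (adjointable, bounded) morphism from the tensor unit, which is precisely what lets arbitrary vectors be probed by unit-to-object morphisms. Everything afterward is density together with continuity. I would emphasise that, since each $x_\xi$ lies in both categories, this single argument settles the $\cat{Hilb}_{C_0(X)}$ and $\cat{Hilb}_{C_0(X)}\bd$ cases at once.
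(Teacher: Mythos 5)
Your proof is correct and follows the same route as the paper, whose entire argument is the single observation that each vector $\xi\in E$ yields a morphism $\varphi\mapsto\xi\varphi\colon C_0(X)\to E$ with adjoint $\inprod{\xi}{-}_E$; you simply make explicit the density and continuity bookkeeping that the paper leaves implicit.
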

\begin{proof}
  Any element $x \in E$ gives rise to a morphism $C_0(X) \to E$ given by $\varphi \mapsto x\varphi$ with adjoint $\inprod{x}{-}_E$.
\end{proof}

\section{Hilbert Bundles}\label{sec:hilbertbundles}

Hilbert modules are principally algebraic structures. 
This section discusses a geometric description, in terms of bundles of Hilbert spaces.
While most of this material is well-known~\cite{dixmier:cstaralgebras}, we state it in a way that is useful for our purposes.
We will use the following definition of vector bundle in a Hilbert setting.

\begin{definition}\label{def:hilbertbundle}
  A \emph{Hilbert bundle} is a bundle $p \colon E \to X$ such that:
  \begin{enumerate}[label=(\alph*)]
  \item all fibres $E_t$ for $t \in X$ are Hilbert spaces;
  \item any $t_0 \in X$ has an open neighbourhood $U \subseteq X$, a natural number $n$, and sections $s_1,\ldots,s_n \colon U \to E$ such that:
  \begin{enumerate}[label=(\roman*)]
  \item $\{s_1(t),\ldots,s_n(t)\}$ is an orthonormal basis of $E_{t}$ for each $t \in U$;
  \item the map $(t,\lambda)\mapsto \sum \lambda_i s_i(t)$ is a homeomorphism $U \times \C^n \simeq E_U$.
  \end{enumerate}
  \end{enumerate}
  The \emph{dimension} of the Hilbert bundle is the function that assigns to each $t \in X$ the cardinal number $\dim(E_t)$.
  The Hilbert bundle is \emph{finite} when its dimension function is bounded: $\sup_{t \in X} \dim(E_t) < \infty$.
\end{definition}

Notice that a Hilbert bundle is a vector bundle. 
Notice also that any Hilbert bundle over a compact space $X$ is necessarily finite: because $X$ is covered by the open neighbourhoods of each $t_0 \in X$ given by (b), there is a finite subcover, and the supremum of $\dim(E_t)$ is a maximum ranging over that finite index set and is therefore always finite.

\begin{remark}\label{rem:dimensioncontinuous}
  It follows from Definition~\ref{def:hilbertbundle}(b) for a finite Hilbert bundle, the dimension $t \mapsto \dim(E_t)$ is a continuous function $X \mapsto \mathbb{N}$. 
\end{remark}

Definition~\ref{def:hilbertbundle} is a simplification of a few variations in the literature, that we now compare.
The reader that is only interested in new developments can safely skip this and continue reading at Definition~\ref{def:bundlemap}.
The \emph{$\varepsilon$-tube around a local section $s$} of a bundle $p \colon E \to X$ whose fibres are normed vector spaces is defined as
\[
  T_\varepsilon(s) = \{ x \in E \mid \forall t \in U \colon \| x - s(p(x)) \|_{E_t} < \varepsilon \}.
\]
A \emph{bounded section} $s$ is a section whose norm $\|s\| = \sup_{t \in X} \|s(t)\|$ is bounded.

\begin{definition}\label{def:fieldofhilbertspaces}
  A \emph{field of Banach (Hilbert) spaces} is a bundle $p\colon E \to X$ with:
  \begin{enumerate}
  \item all fibres $E_t$ for $t \in X$ are Banach (Hilbert) spaces;
  \item addition is a continuous function $\{(x,y) \in E^2 \mid p(x)=p(y)\} \to E$;
  \item scalar multiplication is a continuous function $\mathbb{C} \times E \to E$;
  \item the norm is a continuous function $E \to \C$;
  \item each $x_0 \in E$ has a local section $s$ with $s(p(x_0))=x_0$, 
    and $x_0$ has a neighbourhood basis $T_\varepsilon(s) \cap E_U$ for some neighbourhood $U\subseteq X$ of $p(x_0)$. 
  \end{enumerate}
  We say $p$ has \emph{locally finite rank} when:
  \begin{enumerate}[resume]
  \item any $t_0 \in X$ has a neighbourhood $U \subseteq X$ and $n \in \N$ such that $\dim(E_t)=n$ for all $t \in U$. 
  \end{enumerate}
  Finally, a field of Hilbert spaces is \emph{finite} when the dimension of its fibres is bounded.
\end{definition}

\begin{remark}\label{rem:variationsfieldofspaces}
  Definition~\ref{def:fieldofhilbertspaces} occurs in various places in the literature: 
  \begin{itemize}
    \item \cite[Definition~2.1]{dupre:classifyinghilbertbundles}: using the polarization identity we may replace (4) with inner product being a continuous function $\{(x,y) \in E^2 \mid p(x)=p(y)\}\to \C$.
    \item \cite[Definition~1]{dixmierdouady:champs} and~\cite[IV.1.6.11]{blackadar:operatoralgebra} replace (5) with the existence of a set $\Delta \subseteq \prod_{t \in X} E_t$ satisfying:
      \begin{itemize}
        \item $\{s(t) \mid s \in \Delta\}\subseteq E_t$ is dense for all $t \in X$;
        \item for every $s,s' \in \Delta$ the map $x \mapsto \inprod{s(x)}{s'(x)}_{E_t}$ is in $C(X)$;
        \item $\Delta$ is locally uniformly closed: if $s \in \prod_{t \in X} E_t$ and for each $\varepsilon>0$ and each $t \in X$, there is an $s' \in \Delta$ such that $\|s(t')-s'(t')\|<\varepsilon$ on a neighbourhood of $t$, then $s \in \Delta$;
      \end{itemize}
      this is equivalent because we can recover $E$ as $\prod_{t \in X} E_t$ with the topology generated by the basic open sets $T_\varepsilon(s) \cap E_U$ for $\varepsilon>0$, and $U \subseteq X$ open, and $s \in \Delta$; this topology makes $\Delta$ into the set of bounded sections;
    \item \cite[Definition~2.1]{dupre:classifyinghilbertbundles} explicitly takes $p$ to be open, which follows from (5), because it also considers a weaker version of (5);
    \item \cite[Definition~3.4]{takahashi:hilbertmodules} takes $s$ in (5) to be a global section, because it also considers spaces $X$ that are not functionally separated; for locally compact Hausdorff spaces $X$ this is equivalent;
    \item \emph{finite} fields of Hilbert spaces are usually called \emph{uniformly finite-dimensional}, and automatically have locally finite rank.
  \end{itemize}
  None of these variations matter for the material below.
\end{remark}

\begin{lemma}\label{lem:fieldofhilbertspaces}
  A Hilbert bundle is the same thing as a field of Hilbert spaces of locally finite rank.
  A finite Hilbert bundle is the same thing as a finite field of Hilbert spaces.
\end{lemma}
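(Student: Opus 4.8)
The plan is to verify that each structure satisfies the other's defining conditions, noting that the dimension function is intrinsic to the underlying bundle and therefore agrees in both descriptions; this also makes the word ``finite'' match on both sides, since it means boundedness of this common function. I would first treat the easy direction, from a Hilbert bundle to a field of Hilbert spaces of locally finite rank. Fix $t_0 \in X$ and an orthonormal frame $s_1,\dots,s_n$ on a neighbourhood $U$ as in Definition~\ref{def:hilbertbundle}(b), so the frame map $\Phi\colon U \times \C^n \to E_U$, $(t,\lambda)\mapsto \sum_i \lambda_i s_i(t)$, is a homeomorphism. Since the sets $E_U = p^{-1}(U)$ are open and cover $E$, it suffices to check conditions (2)--(4) of Definition~\ref{def:fieldofhilbertspaces} through $\Phi$: addition and scalar multiplication become the standard operations on $\C^n$, and, because the frame is orthonormal, the norm becomes Euclidean, $\|\Phi(t,\lambda)\| = \|\lambda\|$, all manifestly continuous. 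For condition (5), given $x_0 = \sum_i \lambda^0_i s_i(t_0)$ take the section $s(t) = \sum_i \lambda^0_i s_i(t)$; orthonormality gives $\|x - s(p(x))\| = \|\lambda - \lambda^0\|$ under $\Phi$, so the tubes $T_\varepsilon(s) \cap E_U$ are the $\Phi$-images of product basic neighbourhoods of $(t_0,\lambda^0)$ and hence form a neighbourhood basis. Condition (6) is immediate, as the frame witnesses $\dim(E_t) = n$ for all $t \in U$.

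The substance lies in the converse: producing local orthonormal frames from a field of locally finite rank. Fix $t_0$ and, using condition (6), a neighbourhood on which $\dim(E_t)$ is constantly $n$. Choose an orthonormal basis $e_1,\dots,e_n$ of $E_{t_0}$ and, by condition (5), lift each to a local section $\sigma_i$ with $\sigma_i(t_0) = e_i$. The key point is that continuity of the norm, hence of the inner product by polarization (Remark~\ref{rem:variationsfieldofspaces}), makes the Gram matrix $G_{ij}(t) = \inprod{\sigma_i(t)}{\sigma_j(t)}$ continuous with $\det G(t_0) = 1$, so $\det G(t) \neq 0$ on a smaller neighbourhood $U$, where the $\sigma_i(t)$ are linearly independent and therefore, by the dimension count, a basis of $E_t$. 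Applying Gram--Schmidt yields sections $s_1,\dots,s_n$ orthonormal at every point of $U$; since the process only uses inner products, scalar multiplication, subtraction, and division by the continuous and nonvanishing norms of its partial outputs, each $s_i$ is continuous. This gives Definition~\ref{def:hilbertbundle}(b)(i), and for (b)(ii) the frame map $\Phi$ is continuous by (2)--(3), fibrewise bijective since each $\{s_i(t)\}$ is an orthonormal basis, and has continuous inverse $x \mapsto \bigl(p(x),(\inprod{s_i(p(x))}{x})_i\bigr)$ by continuity of $p$ and the inner product. Hence the field is a Hilbert bundle.

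I expect the continuity of the Gram--Schmidt frame and of the inverse frame map to be the point needing the most care, as this is exactly where the topological hypotheses (2)--(5) of a field get converted into the single strong trivialization of a Hilbert bundle. For the second sentence, the direction from finite Hilbert bundles to finite fields is immediate from the first part, since the dimension function is preserved and boundedness is the meaning of ``finite'' on both sides. The reverse direction needs that a finite field already has locally finite rank: lower semicontinuity of $t \mapsto \dim(E_t)$ follows by lifting a basis as above, whereas the matching upper semicontinuity under boundedness is the subtler input, which I would take from the standard classification of uniformly finite-dimensional fields recorded in Remark~\ref{rem:variationsfieldofspaces}. Granting it, the field has locally finite rank, so the converse of the first part makes it a Hilbert bundle, necessarily finite since the dimension is bounded.
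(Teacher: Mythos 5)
Your proposal is correct and follows essentially the same route as the paper: Gram--Schmidt applied to sections lifting an orthonormal basis of $E_{t_0}$ (on the open set where the lifted sections stay linearly independent) for the hard direction, and direct verification of conditions (2)--(6) of Definition~\ref{def:fieldofhilbertspaces} for the converse, with the tube $T_\varepsilon(s)\cap E_U$ identified through the frame map. Your handling of the ``finite'' clause, deferring the local constancy of dimension for a uniformly finite-dimensional field to Remark~\ref{rem:variationsfieldofspaces}, matches what the paper implicitly relies on.
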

\begin{proof}
  First assume that $p \colon E \to X$ is a field of Hilbert spaces of locally finite rank.
  Condition (a) of Definition~\ref{def:hilbertbundle} is precisely condition (1) of Definition~\ref{def:fieldofhilbertspaces}.
  For condition (b), let $t_0 \in X$. Then (6) yields $n \in \N$ with $\dim(E_{t_0})=n$.
  Pick an orthonormal basis $x_1,\ldots,x_n \in E_{t_0}$. Then (5) gives continuous sections $s_1',\ldots,s_n'$ of $p$ over $U_1,\ldots,U_n\subseteq X$. Take $U=U_1 \cap \cdots \cap U_n \cap \{ t \in X \mid \{s_1(t),\ldots,s_n(t)\} \text{ linearly independent}\}$; this is an open subset of $X$ by (6) and~\cite[Proposition~1.6]{dupre:classifyinghilbertbundles}. Now, as in~\cite[Proposition~2.3]{dupre:classifyinghilbertbundles}, applying Gram-Schmidt for each $t \in U$ gives continuous sections $s_1,\ldots,s_n$ of $p$ over $U$ because of (2), (3) and (4). Moreover, these sections $s_i$ satisfy (i), (ii), and (iii) of condition (b).

  Now assume $p \colon E \to X$ is a Hilbert bundle.
  Condition (1) is still precisely condition (a).
  For condition (2), define addition $\coprod_{t_0 \in X} E_{t_0}^2 \to E_{t_0} \subseteq E$ as the cotuple of the additions $E_{t_0}^2 \to E_{t_0}$ over all $t_0 \in X$. Since the forgetful functor $\cat{Top} \to \cat{Set}$ uniquely lifts colimits, the former is continuous because the latter are continuous by (a).
  For condition (3), define scalar multiplication $\C \times E \simeq \C \times \coprod_{t_0 \in X} E_{t_0} \simeq \coprod_{t_0 \in X} \C \times E_{t_0} \to E$ as the cotuple of scalar multiplications $\C \times E_{t_0} \to E_{t_0}$ over all $t_0 \in X$. Again, this is continuous by condition (a).
  Condition (4) is satisfied exactly like (2).
  For condition (5), let $x_0 \in E$.
  Condition (b) gives a neighbourhood $U \subseteq X$ of $t_0=p(x_0)$ and $s_1,\ldots,s_n \colon U \to E$.
  Define $s \colon U \to E_U \subseteq E$  by $s(t)=\sum_i \lambda_i s_i(t)$. Then $s(p(x_0))=x_0$ by (b.ii), and $s$ is continuous on $U$. 
  Let $V\subseteq E$ be a neighbourhood of $x_0$. Find a neighbourhood $U_0 \subseteq X$ of $t_0$ with $p(V) \subseteq U_0$. Write $\varphi$ for the homeomorphism of (b.ii).
  Take $\varepsilon=1$, and $V_0 = \varphi(U_0 \times \C^n)$. Then $x_0 \in V_0 \subseteq V$ by construction, and moreover $V_0$ is contained in 
  \begin{align*}
    T_\varepsilon(s) \cap E_U
    & = \{ x \in E_U \mid \forall t \in U \colon \| x - s(p(x)) \|_{E_t} < 1 \} \\
    & = \varphi( \{ (t,\lambda_1,\ldots,\lambda_n) \in U \times \C^n \mid \| \sum_{i=1}^n \lambda_is_i(t) - s(p(\sum_{i=1}^n \lambda_i s_i(t))) \| < 1 \} )\\
    & = \varphi(U \times \C^n)
  \end{align*}
  because $s(t) \in E_t$ by (b.ii) and hence $p(s(t))=t$ by (b.i).
  Finally, condition (6) follows directly from (b).
\end{proof}

Having defined the notion of Hilbert bundle of use to us, we now define the appropriate notion of morphisms.

\begin{definition}\label{def:bundlemap}
  A \emph{bundle map} from $p \colon E \twoheadrightarrow X$ to $p' \colon E' \twoheadrightarrow X$ is a continuous function $f \colon E \to E'$ satisfying $p' \circ f = p$.
  Write $\cat{FieldHilb}_X\bd$ for the category of fields of Hilbert spaces and fibrewise linear bundle maps,
  $\cat{HilbBundle}_X\bd$ for the full subcategory of Hilbert bundles, 
  and $\cat{FHilbBundle}_X\bd$ for the full subcategory of finite Hilbert bundles.

  A bundle map $f \colon p \to p'$ between fields of Hilbert spaces is \emph{adjointable} when it is adjointable on each fibre, and the map $E'_t \ni y \mapsto f^\dag(y) \in E_t$ is continuous.
  Write $\cat{FieldHilb}_X$, $\cat{HilbBundle}_X$, and $\cat{FHilbBundle}_X$ for the wide dagger subcategories of adjointable maps.
\end{definition}

In the rest of this section we show that it is completely equivalent to work in terms of Hilbert modules, and to work in terms of Hilbert bundles. More precisely, there is a version of the Serre-Swan theorem
~\cite[13.4.5]{weggeolsen:ktheory}
for Hilbert bundles, that we now embark on proving.
We first establish a functor, then prove that it is an equivalence, and finally that they preserve monoidal structure.
If $p \colon E \twoheadrightarrow X$ is a field of Hilbert spaces, we say a function $s \colon X \to E$ \emph{vanishes at infinity} when for each $\varepsilon>0$ there is a compact $U \subseteq X$ such that $\|s(t)\|_{E_t} < \varepsilon$ for $t \in X \setminus U$.

\begin{proposition}\label{prop:sectionsvanishingatinfinity}
  Let $X$ be a locally compact Hausdorff space.
  There is a functor $\Gamma_0 \colon \cat{FieldHilb}_X\bd \to \cat{Hilb}_{C_0(X)}\bd$, defined by
  \begin{align*}
    \Gamma_0(p) & = \{ s \colon X \to E \mid p \circ s = 1_X,\, s \text{ continuous},\, s \text{ vanishes at infinity}\} \text{,}\\
    \Gamma_0(f) & = f \circ (-)\text{.}
  \end{align*}
  It restricts to a functor $\Gamma_0 \colon \cat{FieldHilb}_X \to \cat{Hilb}_{C_0(X)}$ that preserves daggers.
\end{proposition}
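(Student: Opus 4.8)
The plan is to verify, in turn, that $\Gamma_0(p)$ is a Hilbert $C_0(X)$-module, that $\Gamma_0(f)$ is a bounded $C_0(X)$-linear map, that $\Gamma_0$ respects identities and composition, and finally that it preserves adjoints. Since all the relevant structure is defined pointwise over the base, most verifications reduce to the corresponding fact in each fibre $E_t$; the only genuinely analytic input is completeness, which is where the topological axioms of a field of Hilbert spaces are really used.

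First I would equip $\Gamma_0(p)$ with the pointwise module structure $(s\varphi)(t)=s(t)\varphi(t)$ and inner product $\inprod{s}{s'}(t)=\inprod{s(t)}{s'(t)}_{E_t}$, and check these land in the right place. That $s\varphi$ and $s+s'$ are again continuous sections vanishing at infinity follows from continuity of scalar multiplication and of addition (conditions (3) and (2) of Definition~\ref{def:fieldofhilbertspaces}); that $\inprod{s}{s'}\in C_0(X)$ follows from continuity (via polarisation and continuity of the norm, condition (4)) together with vanishing at infinity (by Cauchy--Schwarz, using that a continuous section vanishing at infinity is bounded because $t\mapsto\|s(t)\|$ is continuous). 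The three inner-product axioms of a Hilbert module then hold because they hold in every fibre, the order and norm on $C_0(X)$ being pointwise, so in particular $\inprod{s}{s}=0$ forces $s(t)=0$ for all $t$.

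Completeness is the step I expect to be the main obstacle. Given a Cauchy sequence $(s_n)$ for $\|s\|=\sup_t\|s(t)\|_{E_t}$, each $(s_n(t))$ is Cauchy in the complete space $E_t$, so $s(t):=\lim_n s_n(t)$ defines a section with $s_n\to s$ uniformly; vanishing at infinity of $s$ is immediate from the uniform estimate. The crux is continuity of $s$: fixing $t_0$ and a basic neighbourhood $T_\varepsilon(\sigma)\cap E_U$ of $s(t_0)$ furnished by condition (5) with $\sigma(t_0)=s(t_0)$, I would pick $n$ with $\|s-s_n\|<\varepsilon/3$; then $t\mapsto\|s_n(t)-\sigma(t)\|$ is continuous (difference of continuous sections composed with the continuous norm) and smaller than $\varepsilon/3$ at $t_0$, hence on a neighbourhood $W$ of $t_0$, and the triangle inequality gives $s(t)\in T_\varepsilon(\sigma)$ for $t\in W\cap U$. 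This is the only place where the bundle topology, rather than mere fibrewise data, is essential.

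For a morphism $f$, boundedness (finiteness of $\|f\|=\sup_t\|f_t\|$, as recorded by the $\bd$ decoration) is exactly what is needed: $\Gamma_0(f)(s)=f\circ s$ is a continuous section that vanishes at infinity since $\|f(s(t))\|\le\|f\|\,\|s(t)\|$, and the same estimate shows $\Gamma_0(f)$ is bounded with $\|\Gamma_0(f)\|\le\|f\|$, while fibrewise linearity yields $C_0(X)$-linearity. Functoriality is immediate, as composing sections with $f$ is strictly functorial. Finally, if $f$ is adjointable then its fibrewise adjoint $f^\dag$ is again a bounded bundle map (continuous by the definition of adjointability, with $\|f^\dag_t\|=\|f_t\|$), so $\Gamma_0(f^\dag)$ is defined, and the fibrewise identity $\inprod{f(s(t))}{s'(t)}_{E'_t}=\inprod{s(t)}{f^\dag(s'(t))}_{E_t}$ gives $\inprod{\Gamma_0(f)(s)}{s'}=\inprod{s}{\Gamma_0(f^\dag)(s')}$ in $C_0(X)$; hence $\Gamma_0(f)^\dag=\Gamma_0(f^\dag)$ and $\Gamma_0$ restricts to a dagger functor $\cat{FieldHilb}_X \to \cat{Hilb}_{C_0(X)}$.
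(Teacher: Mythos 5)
Your proposal is correct and follows essentially the same route as the paper's proof: pointwise module and inner-product structure, Cauchy--Schwarz to land in $C_0(X)$, fibrewise Cauchy sequences plus uniform convergence for completeness, the estimate $\|f\circ s\|\le\|f\|\,\|s\|$ for well-definedness on morphisms, and the fibrewise adjoint identity for dagger preservation. The only difference is that you spell out the continuity of the uniform limit via the neighbourhood basis $T_\varepsilon(\sigma)\cap E_U$, a detail the paper leaves implicit.
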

\begin{proof}
  Pointwise multiplication makes $\Gamma_0(p)$ into a right $C_0(X)$-module.
  For $s,s' \in \Gamma_0(p)$ and $t \in X$, the nondegenerate inner product $\inprod{s}{s'}(t) = \inprod{s(t)}{s'(t)}_{E_t}$ takes values in $C_0(X)$ by the Cauchy-Schwarz inequality.
  Finally, $\Gamma_0(p)$ is complete: if $s_n$ is a Cauchy sequence in $\Gamma_0(p)$, then $s_n(t)$ is a Cauchy sequence in $E_t$ for each $t \in X$, and hence converges to some $s(t)$; since the convergence is uniform this defines a continuous function $s \colon X \to E$, that satisfies $p \circ s = 1_X$ and vanishes at infinity by construction.
  Thus $\Gamma_0(p)$ is a well-defined Hilbert $C_0(X)$-module.

  Let $f \colon p \to p'$ be a morphism of fields of Hilbert spaces.
  Define $\Gamma_0(f) = f \circ (-) \colon \Gamma_0(p) \to \Gamma_0(p')$.
  This is clearly $C_0(X)$-linear, bounded, and functorial. It is also well-defined: if $s \in \Gamma_0(p)$, then $\|f \circ s\| \leq \|f\| \|s\|$ vanishes at infinity too.

  A morphism $f \colon p \to p'$ in $\cat{FieldHilb}_X$ is adjointable precisely when there is a bounded bundle map $f^\dag \colon p' \to p$ that provides fibrewise adjoints:
  \[
    \inprod{f(s(t))}{s'(t)}_{E_t} = \inprod{s(t)}{f^\dag(s'(t))}_{E'_t}
  \]
  for all $t \in X$, $s \in \Gamma_0(p)$, and $s' \in \Gamma_0(p')$.
  That is, $f$ is adjointable if and only if $\Gamma(f)$ is. Thus the functor $\Gamma_0$ preserves daggers.
\end{proof}

\begin{theorem}\label{thm:sectionsvanishingatinfinity}
  The functors $\Gamma_0$ from Proposition~\ref{prop:sectionsvanishingatinfinity} are equivalences.
\end{theorem}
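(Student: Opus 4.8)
The plan is to construct an explicit inverse functor $\Psi \colon \cat{Hilb}_{C_0(X)}\bd \to \cat{FieldHilb}_X\bd$ by fibrewise localization, and to produce natural isomorphisms $\Gamma_0 \circ \Psi \cong \id$ and $\Psi \circ \Gamma_0 \cong \id$. On objects, given a Hilbert $C_0(X)$-module $M$, I would take the fibre at $t \in X$ to be $E_t = \Loc_{\ev_t}(M)$, the localization along evaluation at $t$ from Proposition~\ref{prop:localization}: the Hilbert-space completion of $M / N_t$ with $N_t = \{m \mid \inprod{m}{m}_M(t) = 0\}$ and inner product $\inprod{m + N_t}{m' + N_t} = \inprod{m}{m'}_M(t)$. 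Each $m \in M$ yields a section $\hat{m}(t) = m + N_t$, and I would topologize $E = \coprod_t E_t$ by the tubes $T_\varepsilon(\hat{m}) \cap E_U$. On a bounded module map $\phi$ I would set $\Psi(\phi)_t = \Loc_{\ev_t}(\phi)$, which is fibrewise bounded by functoriality of localization and continuous because it sends $\hat{m}$ to $\widehat{\phi(m)}$ and hence tubes into tubes.

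First I would check that $\Psi(M)$ is a field of Hilbert spaces, verifying Definition~\ref{def:fieldofhilbertspaces}(1)--(5) through the Dixmier--Douady description recalled in Remark~\ref{rem:variationsfieldofspaces}, with generating set $\Delta_0 = \{\hat{m} \mid m \in M\}$: density of $\{\hat{m}(t)\}$ in $E_t$ is immediate since $E_t$ is the completion of $M / N_t$, and continuity of $t \mapsto \inprod{\hat{m}(t)}{\hat{m'}(t)} = \inprod{m}{m'}_M(t)$ holds because this value lies in $C_0(X)$. Next, for $\Gamma_0 \circ \Psi \cong \id$, I would show that $m \mapsto \hat{m}$ is $C_0(X)$-linear, inner-product preserving, and isometric, since $\|\hat{m}\| = \sup_t |\inprod{m}{m}_M(t)|^{1/2} = \|m\|_M$; as each $\hat{m}$ vanishes at infinity (because $\inprod{m}{m}_M \in C_0(X)$), its image lies in $\Gamma_0(\Psi(M))$ and is closed by completeness of $M$. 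It remains to prove this image is dense, i.e.\ that every continuous section vanishing at infinity is a uniform limit of sections $\hat{m}$; locally this is the definition of the topology, and I would patch local approximations together with a partition of unity (Urysohn and Tietze, Remark~\ref{rem:tietze}) into a single element of $M$ approximating the section in supremum, hence module, norm.

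For $\Psi \circ \Gamma_0 \cong \id$, given a field $p \colon E \to X$ with $M = \Gamma_0(p)$, I would use the fibrewise map $\Loc_{\ev_t}(M) \to E_t$ sending $s + N_t \mapsto s(t)$: it is isometric by construction and surjective because Definition~\ref{def:fieldofhilbertspaces}(5) presents each $x_0 \in E_{t_0}$ as $s(t_0)$ for a local section that Urysohn truncates to an element of $\Gamma_0(p)$, making $\{s(t) \mid s \in M\}$ dense in $E_t$. These fibrewise unitaries assemble into a homeomorphism $\Psi(\Gamma_0(p)) \to E$ because both topologies are generated by tubes around corresponding sections. Faithfulness of $\Gamma_0$ then follows since sections separate the points of $E$ (again by (5) and Urysohn), and fullness follows because the two natural isomorphisms identify $\Gamma_0$-images with the original objects, so every module map is $\Gamma_0$ of some $\Psi(\phi)$. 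For the dagger version, $\phi$ is adjointable exactly when each $\Loc_{\ev_t}(\phi)$ is and $t \mapsto \Loc_{\ev_t}(\phi)^\dag$ is continuous, which matches the notion of adjointable bundle map in Definition~\ref{def:bundlemap}; since $\Gamma_0$ already preserves daggers by Proposition~\ref{prop:sectionsvanishingatinfinity}, this upgrades the equivalence to $\cat{Hilb}_{C_0(X)} \simeq \cat{FieldHilb}_X$.

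The hard part will be the density-and-gluing claim in the second step: showing that an arbitrary continuous section vanishing at infinity is globally uniformly approximable by sections of the form $\hat{m}$, so that by completeness of the Hilbert module it actually equals some $\hat{m}$. This is where local compactness, partitions of unity, and completeness of $M$ must be combined, and the parallel verification that $\Psi(M)$ satisfies the neighbourhood-basis axiom~(5) of Definition~\ref{def:fieldofhilbertspaces} is the secondary technical point.
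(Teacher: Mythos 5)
Your proposal is correct and follows essentially the same route as the paper: the quasi-inverse $\Psi = \coprod_{t}\Loc_t(-)$ with the tube topology, the sections $\hat{m}$, Urysohn truncation, and the partition-of-unity density argument are exactly the ingredients of the paper's proof of essential surjectivity, fullness, and faithfulness (where the paper outsources the topology and the density step to Dauns--Hofmann and Takahashi). Packaging the argument as an explicit inverse functor with two natural isomorphisms, rather than as fully faithful plus essentially surjective, is only an organizational difference.
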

\begin{proof}
  We first show that the functor $\Gamma_0$ is faithful. 
  Suppose $f \neq g$, say $f(x) \neq g(x)$ and $p(x)=t$.
  There exists a local continuous section $s_U \colon U \to E$ of $p$ over some open neighbourhood $U \subseteq X$ because $p$ is a field of Hilbert spaces. 
  Local compactness of $X$ ensures there is a compact neighbourhood of $t$ within $U$, which in turn contains an open neighbourhood $V \subseteq X$ of $x$.
  Urysohn's lemma provides a continuous function $r \colon X \to [0,1]$ that vanishes on $X \setminus V$ and satisfies $r(t)=1$.
  Now define $s_x \colon X \to E$ by $s_x(t)=0$ for $t \in X \setminus U$ and $s_x(t)=r(t)s_U(t)$ for $t \in U$. Then $s_x \in \Gamma_0(p)$ and $s_x(t)=x$.
  Hence $f \circ s_x(t) \neq g \circ s_x(t)$, and so $\Gamma_0(f) \neq \Gamma_0(g)$.

  Next we show that the functor $\Gamma_0$ is also full. Suppose $f \colon \Gamma_0(p) \to \Gamma_0(p')$ is bounded and $C_0(X)$-linear. For $x \in E$, set $g(x) = f(s_x)(p(x))$. 
  Because $s_x \in \Gamma_0(p)$, now $f(s_x) \in \Gamma_0(p')$, so the value $g(x)=f(s_x)(p(x))$ is an element of $E'$. Thus $g \colon E \to E'$ is a well-defined function, that furthermore satisfies $p' \circ g=p$.
  It is also fibrewise linear because if $p(x)=p(y)$ then $f(s_x+s_y)(p(x))=f(s_{x+y})(p(y))$.
  Moreover $g$ is continuous by the definition of the topology on the field of Hilbert spaces $E$. 
  Hence $g$ is a well-defined morphism of fields of Hilbert spaces.
  Finally, if $s \in \Gamma_0(p)$ and $t \in X$, then $g(s(t)) = f(s_{s(t)})(p(s(t))) = f(s_{s(t)})(t) = f(s)(t)$. So $f(s)=g \circ s$, whence $f=\Gamma_0(g)$, and $\Gamma_0$ is full.

  Finally, we show that $\Gamma_0$ is essentially surjective.
  Let $H$ be a $C_0(X)$-Hilbert module. 
  Set 
  $
    E = \coprod_{t \in X} \Loc_t(H)\text{,}
  $
  and let $p$ be the canonical projection $E \twoheadrightarrow X$.
  Because $X$ is locally compact Hausdorff, it is compactly generated: a subset $U \subseteq X$ is open if and only if $U \cap K$ is open in $K$ for all compact subsets $K \subseteq X$. Hence the topology on $X$ is determined by the topology of its compact subspaces.
  It follows from~\cite[II.1.15]{daunshofmann:sections} and~\cite[Lemma~3.01(iv), Lemma~3.09, and Proposition~3.10]{takahashi:hilbertmodules} that there is a unique weakest topology on $E$ making $p$ into a field of Hilbert spaces. 

  As in Lemma~\ref{lem:wellpointed}, we may regard elements of $H$ as adjointable maps $C_0(X) \to H$.
  For $x \in H$, define $s_x \colon X \to E$ by $s_x(t) = \Loc_t(x)$, so that $p \circ s_x = 1_X$ by construction.
  Moreover, $s_x$ vanishes at infinity, because the inner product in $H$ takes values in $C_0(X)$: if $\varepsilon>0$, there is a compact $U \subseteq X$ such that $\|s_x(t)\|_{\Loc_t(H)} = \|x\|_H(t) < \varepsilon$ for $t \in X \setminus U$.
  Finally, $s_x$ is continuous by construction of the topology on $E$. Thus $\{s_x \mid x \in X\} \subseteq \Gamma_0(p)$.

  To complete the proof that $\Gamma_0$ is essentially surjective, it now suffices to show that $\{s_x \mid x \in X\} \subseteq \Gamma_0(p)$ is dense. 
  Let $s \in \Gamma_0(p)$ and $\varepsilon$.
  Then there exists a compact subset $K \subseteq X$ such that $\|s(t)\|<\varepsilon$ for $t \in X \setminus K$.
  Urysohn's lemma provides a function $X \to [0,1]$ that vanishes at infinity such that $f(t)=1$ for $t \in K$.
  By multiplying with this function it suffices to find $x \in H$ so that the continuous local section $s_x \colon K \to X$ satisfies $\|s_x(t)-s(t)\|<\varepsilon$ for $t \in K$. This can be done by the method of the proof of~\cite[Theorem~3.12]{takahashi:hilbertmodules}.
  Therefore $\|s_x(t) - s(t)\| < \varepsilon$ for all $t \in X$.
  Thus $\Gamma_0(p) \simeq H$, and $\Gamma_0$ is essentially surjective.
\end{proof}

\begin{corollary}\label{cor:hilbertbundlemonoidal}
  The category $\cat{FieldHilb}_X\bd$ is a symmetric monoidal category for any topological space $X$, where the tensor product of $E \to X$ and $F \to X$ is $E \otimes F = \coprod_{t \in X} E_t \otimes F_t$ (with canonical topology provided by~\cite[II.1.15]{daunshofmann:sections} as in the proof of the previous lemma.) 
  The category $\cat{FieldHilb}_X$ is a symmetric monoidal dagger subcategory.
  The functors $\Gamma_0$ are (strong) monoidal.
\end{corollary}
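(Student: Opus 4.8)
The plan is to build the entire symmetric monoidal (dagger) structure fibrewise, so that the coherence data and axioms are inherited verbatim from $\cat{Hilb}$ via Proposition~\ref{prop:monoidal}, and only the topological bookkeeping is genuinely new. On objects the unit is the trivial field $X \times \C$ with all fibres equal to $\C$, and the tensor product is $E \otimes F = \coprod_{t \in X} E_t \otimes F_t$, where $E_t \otimes F_t$ denotes the Hilbert-space tensor product of the fibres. On a pair of fibrewise-linear bundle maps $f \colon E \to E'$ and $g \colon F \to F'$ the tensor $f \otimes g$ acts on each fibre as $f_t \otimes g_t$, and the associator, unitors, and symmetry are, fibre by fibre, exactly those of $\cat{Hilb}$.

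First I would fix the topology on $E \otimes F$. Following the construction invoked in the proof of Theorem~\ref{thm:sectionsvanishingatinfinity} (see Remark~\ref{rem:variationsfieldofspaces} and \cite[II.1.15]{daunshofmann:sections}), it suffices to exhibit a family of sections of $\coprod_t E_t \otimes F_t$ that is pointwise dense in each fibre and on which inner products are continuous; the cited result then produces the weakest topology making these continuous sections of a field of Hilbert spaces. I would take the finite sums $t \mapsto \sum_i s_i(t) \otimes s_i'(t)$ built from local continuous sections $s_i$ of $E$ and $s_i'$ of $F$. Pointwise density holds because the algebraic tensor product is dense in $E_t \otimes F_t$ while the local sections are pointwise dense in each fibre; continuity of $t \mapsto \inprod{s_1(t) \otimes s_1'(t)}{s_2(t) \otimes s_2'(t)}$ follows from the multiplicativity $\inprod{x_1 \otimes y_1}{x_2 \otimes y_2}_{E_t \otimes F_t} = \inprod{x_1}{x_2}_{E_t}\inprod{y_1}{y_2}_{F_t}$ together with continuity of the inner products in the fields $E$ and $F$ (Remark~\ref{rem:variationsfieldofspaces}). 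With this topology each of $f \otimes g$ and the coherence isomorphisms carries such a generating section-tensor to another section-tensor (for instance the symmetry sends $s \otimes s'$ to $s' \otimes s$), and a fibrewise-linear map that is continuous on a generating family of sections is continuous. Since a bundle map is determined by its fibres, the pentagon, triangle, and hexagon equalities hold globally because they hold in each fibre by Proposition~\ref{prop:monoidal}. For $\cat{FieldHilb}_X$ I would additionally note that $(f \otimes g)^\dag = f^\dag \otimes g^\dag$ fibrewise and that the coherence isomorphisms are fibrewise unitary, hence adjointable with unitary adjoint, so the dagger restricts as claimed.

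Finally, for locally compact Hausdorff $X$ I would show that $\Gamma_0$ is strong monoidal. The unit constraint is the canonical identification $C_0(X) = \Gamma_0(X \times \C)$. For the tensor constraint I would define $\Gamma_0(E) \otimes \Gamma_0(F) \to \Gamma_0(E \otimes F)$ on elementary tensors by $s \otimes s' \mapsto (t \mapsto s(t) \otimes s'(t))$; this target section vanishes at infinity because $\|s(t) \otimes s'(t)\| = \|s(t)\|\,\|s'(t)\|$. Multiplicativity of the inner product shows the map is an isometry of Hilbert $C_0(X)$-modules, and its image is dense because the section-tensors generate the topology of $E \otimes F$, so the approximation argument of Theorem~\ref{thm:sectionsvanishingatinfinity} applies; an isometry with dense image is unitary by \cite[Theorem~3.5]{lance:hilbert}. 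Naturality and the monoidal coherence diagrams are then verified on elementary tensors.

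The main obstacle is the topological content concentrated in the second paragraph: checking that the fibrewise Hilbert-space tensor product really assembles into a field of Hilbert spaces under the Dauns--Hofmann topology, and that this is precisely the topology for which the coherence maps are continuous and for which the section-tensors are dense (the latter being what makes the $\Gamma_0$-constraint surjective). The algebraic coherence comes for free, inherited from $\cat{Hilb}$; all the work lies in the interaction between the fibrewise algebra and the generated topology.
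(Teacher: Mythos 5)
Your proposal is correct and follows essentially the same route as the paper: the tensor product is defined fibrewise with the Dauns--Hofmann topology generated by (closures of finite sums of) elementary tensor sections $\sum_i s_i \otimes s_i'$, the coherence data and dagger are inherited fibrewise, and the strong monoidality of $\Gamma_0$ comes from the isometry $s \otimes s' \mapsto (t \mapsto s(t)\otimes s'(t))$ having dense image by construction. The paper's proof is merely terser, delegating the topological bookkeeping to the cited references where you spell it out.
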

\begin{proof}
  The tensor product $E \otimes E'$ becomes a well-defined object by letting $\Delta_{E \otimes F}$ be the closure of the pre-Hilbert $C_0(X)$-module of all finite sums of bounded sections vanishing at infinity $\sum_{i=1}^n s_i \otimes s'_i$ of $s_i \in \Gamma_0(E)$ and $s'_i \in \Gamma_0(E')$; see~\cite[Section~18]{dixmierdouady:champs} or~\cite[Definition~15.3]{bos:groupoids}.
  Via Lemma~\ref{lem:fieldofhilbertspaces}, this restricts to the monoidal product on $\cat{FHilbBundle}_X$ as in the statement.
  Defining tensor products of morphisms is straightforward, as are associators and unitors, and checking the pentagon and triangle equations. 
  The dagger is also clearly well-defined in $\cat{FHilbBundle}_X$, making it a symmetric monoidal dagger category.
  By construction of Proposition~\ref{prop:monoidal}, the functors $\Gamma_0$ are (strong) monoidal. 
\end{proof}

\section{Dual objects}\label{sec:duals}

After having given the equivalent geometric description of Hilbert modules in terms of bundles in the last section, we now return to studying the monoidal structure. This section is devoted to dual objects, that play an important role in any monoidal category. Dual objects generally behave somewhat like `finite' or `finite-dimensional' objects. The precise notion of `finiteness' in this setting turns out to be that in the following definition.

From now on we will restrict ourselves to locally compact Hausdorff spaces $X$ that are paracompact.

\begin{definition}
  A Hilbert $C$-module $E$ is \emph{finitely presented projective} when there is an adjointable map $i \colon E \to C^n$ for some $n \in \mathbb{N}$ with $i^\dag \circ i = \id[E]$. 
\end{definition}

In other words, finitely presented projective Hilbert $C$-modules are orthogonal direct summands of $C^n$. 
Any (algebraically) finitely generated projective Hilbert $C$-module is an example.
When $X$ is compact, a Hilbert $C(X)$-module is finitely presented projective if and only if it is finitely generated as a $C(X)$-module and a projective object in the category of $C(X)$-modules~\cite[Theorem~5.4.2]{weggeolsen:ktheory}.

Finitely presented projective Hilbert modules have pleasant properties, such as the following lemma, that proves that all bounded maps are adjointable in this setting.

\begin{lemma}\label{lem:finitelypresentedmapsadjointable}
  Any bounded $C$-linear map between finitely presented projective Hilbert $C$-modules is adjointable.  
\end{lemma}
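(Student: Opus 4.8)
The plan is to reduce the statement to the case of free modules, and then to the tensor unit itself, where the scalar computation of Lemma~\ref{lem:stonecechscalars} applies. By definition of finite presentation there are adjointable isometries $i \colon E \to C^n$ and $j \colon F \to C^m$ with $i^\dag \circ i = \id[E]$ and $j^\dag \circ j = \id[F]$, exhibiting $E$ and $F$ as orthogonal direct summands of free modules. Given a bounded $C$-linear map $f \colon E \to F$, I would form the bounded $C$-linear map $g = j \circ f \circ i^\dag \colon C^n \to C^m$ and observe that
\[
  j^\dag \circ g \circ i = (j^\dag \circ j) \circ f \circ (i^\dag \circ i) = f\text{.}
\]
Since $i$, $i^\dag$, $j$, and $j^\dag$ are already adjointable, it therefore suffices to prove that the free-module map $g$ is adjointable.

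For the free case I would use that $C^n$ and $C^m$ are dagger biproducts of copies of the tensor unit $C$ (Lemma~\ref{lem:biproducts}). Writing $\iota_k$ and $\pi_k$ for the relevant injections and projections, each component $g_{\ell k} = \pi_\ell \circ g \circ \iota_k \colon C \to C$ is a bounded $C$-linear endomorphism of the unit, and $g = \sum_{\ell,k} \iota_\ell \circ g_{\ell k} \circ \pi_k$. By Lemma~\ref{lem:stonecechscalars} every bounded $C$-linear endomorphism of $C = C_0(X)$ is a scalar, that is, multiplication by a function in $C_b(X)$, and is hence adjointable with adjoint given by multiplication by the complex conjugate. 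Since $\iota_k = \pi_k^\dag$ in a dagger biproduct, and adjointable maps are closed under composition and finite sums, the matrix $g$ is adjointable, with $g^\dag = \sum_{\ell,k} \iota_k \circ g_{\ell k}^\dag \circ \pi_\ell$.

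Finally, $f = j^\dag \circ g \circ i$ is a composite of adjointable maps and is therefore adjointable, completing the argument. The only substantive ingredient is the identification of bounded endomorphisms of the unit with multiplication operators in $C_b(X)$ supplied by the bounded version of Lemma~\ref{lem:stonecechscalars}; everything else is formal bookkeeping with the dagger biproduct structure. Accordingly, I do not expect any analytic difficulty: boundedness enters only to guarantee that $g$ and its components $g_{\ell k}$ are genuine morphisms of $\cat{Hilb}_C\bd$ to which Lemma~\ref{lem:stonecechscalars} applies. The one point needing care is to confirm that both the reduction $f = j^\dag \circ g \circ i$ and the matrix decomposition of $g$ keep us inside the class of adjointable morphisms at every stage.
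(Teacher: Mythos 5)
Your proposal is correct and follows essentially the same route as the paper: reduce to the free case via $g = j \circ f \circ i^\dag$, decompose $g$ as a matrix of bounded endomorphisms of the unit, identify these with multiplication operators in $C_b(X)$ via Lemma~\ref{lem:stonecechscalars} (hence adjointable, with adjoint the conjugate), and recover $f = j^\dag \circ g \circ i$ as a composite of adjointables. The only cosmetic difference is that the paper writes down the candidate $f^\dag = i^\dag \circ g^\dag \circ j$ and verifies the adjunction identity directly, whereas you invoke closure of adjointable maps under composition; these are the same argument.
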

\begin{proof}
  Let $i \colon E \to C^m$ and $j \colon F \to C^n$ satisfy $i^\dag \circ i =\id[E]$ and $j^\dag \circ j = \id[F]$.
  Let $f \colon E \to F$ be a bounded $C$-linear map.
  Then $g=j \circ f \circ i^\dag \colon C^m \to C^n$ is a bounded $C$-linear map, and hence an $m$-by-$n$ matrix of bounded $C$-linear maps $C \to C$. 
  But any bounded linear map $C_0(X) \to C_0(X)$ is adjointable. To see this, first use Lemma~\ref{lem:stonecechscalars} to see that it multiplies with some $k \in C_b(X)$. Now $\inprod{kl}{m}(t) = k(t)^* l(t) m(t) = \inprod{l}{k^*m}(t)$, so $k$ is adjointable.
  Thus, by Lemma~\ref{lem:biproducts}, also $g$ is adjointable.
  But then $f^\dag=i^\dag \circ g^\dag \circ j$ is an adjoint for $f$, because $\inprod{f^\dag(y)}{x}_E = \inprod{y}{j^\dag \circ g \circ i(x)}_F = \inprod{y}{j^\dag \circ j \circ f \circ i^\dag \circ i(x)}_F = \inprod{y}{f(x)}_F$.
\end{proof}

It follows that the full subcategories of $\cat{Hilb}_C$ and $\cat{Hilb}_C\bd$ of finitely presented projective Hilbert $C$-modules coincide. We write $\cat{FHilb}_C$ for this category. If $C$ is unital, we write $1_C$ for its unit.

There is an established notion of dual Hilbert module, that a priori differs from the categorical notion. The following lemma details the established notion.

\begin{lemma}\label{lem:dualmodule}
  If $X$ is a locally compact Hausdorff space, and $E$ is a finitely presented projective Hilbert $C_0(X)$-module, 
  then $E^* = \cat{Hilb}_{C_0(X)}(E,C_0(X))$ is a Hilbert $C_0(X)$-module where $\inprod{f}{g}_{E^*}$ is the element of $C_0(X)$ that $f \circ g^\dag$ multiplies with according to Lemma~\ref{lem:stonecechscalars}.
  If $X$ is compact, $\inprod{f}{g}_{E^*} = f \circ g^\dag(1_{C(X)})$.
\end{lemma}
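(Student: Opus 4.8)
The plan is to equip $E^*=\cat{Hilb}_{C_0(X)}(E,C_0(X))$ with its natural $C_0(X)$-action, to read off the inner product from Lemma~\ref{lem:stonecechscalars}, and then to verify the Hilbert module axioms one by one. First I would define the action by $(f\cdot c)(x)=f(x)c$; since $C_0(X)$ is commutative this is again $C_0(X)$-linear, and it is adjointable with $(f\cdot c)^\dag(a)=f^\dag(a)c^*$, so that $f\cdot c\in E^*$. For $f,g\in E^*$ the composite $f\circ g^\dag\colon C_0(X)\to C_0(X)$ is an endomorphism of the tensor unit, hence a scalar; by Lemma~\ref{lem:stonecechscalars} it is multiplication by a unique function, and I set $\inprod{f}{g}_{E^*}$ to be that function.

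Next I would check the axioms. Conjugate symmetry is cleanest: as the dagger is a contravariant involution, $(f\circ g^\dag)^\dag=g\circ f^\dag$, and the involution on scalars from Lemma~\ref{lem:stonecechscalars} is pointwise conjugation, so the scalar of $g\circ f^\dag$ is the conjugate of the scalar of $f\circ g^\dag$; that is precisely $\inprod{f}{g}_{E^*}^*=\inprod{g}{f}_{E^*}$. Linearity in each slot follows from linearity of composition together with the formula $(g\cdot c)^\dag(a)=g^\dag(a)c^*$ for the adjoint of the action. For positivity, $f\circ f^\dag$ is a positive operator on $C_0(X)$, so its scalar is a nonnegative function; and if that scalar vanishes then $\inprod{f^\dag(c)}{f^\dag(c)}_E=\inprod{c}{f(f^\dag(c))}=0$ for every $c$, forcing $f^\dag=0$ and hence $f=0$. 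The one substantive point is completeness, which I would obtain by observing that the Hilbert module norm on $E^*$ coincides with the operator norm: the C*-identity in $\L(C_0(X))$ gives $\|\inprod{f}{f}_{E^*}\|=\|f\circ f^\dag\|=\|f\|^2$, and the adjointable maps $E\to C_0(X)$ are complete under the operator norm. I expect this identification of norms, rather than any single axiom, to be the crux of the argument.

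Finally, for compact $X$ the algebra $C_0(X)=C(X)$ is unital, so the scalar $\inprod{f}{g}_{E^*}$ satisfies $\inprod{f}{g}_{E^*}=\inprod{f}{g}_{E^*}\cdot 1=(f\circ g^\dag)(1_{C(X)})=f\circ g^\dag(1_{C(X)})$, which is the stated formula. The finitely presented projective hypothesis enters chiefly through the realization of $E$ as an orthogonal summand $i\colon E\to C_0(X)^n$ with $i^\dag i=\id[E]$: this identifies $E^*$ with a complemented submodule of $(C_0(X)^n)^*$, from which completeness and the module structure may be transported. The only genuinely delicate point in the locally compact case is confirming that $\inprod{f}{g}_{E^*}$ is $C_0(X)$-valued, so that $E^*$ is a bona fide object of $\cat{Hilb}_{C_0(X)}$; this is automatic when $X$ is compact, whereas for general $X$ the scalar lies a priori in the algebra $C_b(X)=M(C_0(X))$ of Lemma~\ref{lem:stonecechscalars}, and the non-unital bookkeeping must be handled with care.
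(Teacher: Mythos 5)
Your verification of conjugate symmetry, sesquilinearity, positivity, nondegeneracy and the compact-case formula is sound, and your completeness argument via the identity $\|\inprod{f}{f}_{E^*}\| = \|f\circ f^\dag\| = \|f\|^2$ together with completeness of the adjointable maps in operator norm is a legitimate alternative to the paper's route. However, the one point you explicitly defer---that $\inprod{f}{g}_{E^*}$ lies in $C_0(X)$ rather than merely in $C_b(X) = M(C_0(X))$---is precisely the substantive content of the lemma in the non-compact case, and your proposal does not close it. Nor does your suggested fallback: realising $E^*$ as a complemented submodule of $(C_0(X)^n)^*$ does not help, because $\cat{Hilb}_{C_0(X)}(C_0(X),C_0(X)) = C_b(X)$ by Lemma~\ref{lem:stonecechscalars}, so for two such maps the composite $f\circ g^\dag$ is multiplication by a product of multipliers, which need not vanish at infinity. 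Transporting the module structure from $(C_0(X)^n)^*$ would therefore at best produce a $C_b(X)$-valued form, not a Hilbert $C_0(X)$-module.

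The paper's proof is organised around exactly the device that resolves this: it uses finite presentation (via Lemma~\ref{lem:finitelypresentedmapsadjointable}) to represent every $f \in E^*$ as $f = \inprod{x}{-}_E$ for some $x \in E$, a Riesz-type self-duality. Then $f^\dag(\varphi) = x\varphi$, so $f\circ g^\dag$ is multiplication by $\inprod{x}{y}_E$, which manifestly lies in $C_0(X)$; and the remaining axioms come for free: nondegeneracy reduces to nondegeneracy of $\inprod{-}{-}_E$, and a Cauchy sequence $f_n = \inprod{x_n}{-}_E$ in $E^*$ corresponds to a Cauchy sequence $x_n$ in $E$ whose limit represents the limit of the $f_n$. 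The missing step in your argument is this representation theorem; once it is in place, most of what you wrote becomes either an immediate consequence or redundant.
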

\begin{proof}
  It is clear that $E^*$ is a $C_0(X)$-module with pointwise operations.
  Any $f,g \in E^*$ are adjointable by Lemma~\ref{lem:finitelypresentedmapsadjointable}, and hence of the form $f=\inprod{x}{-}_E$ and $g=\inprod{y}{-}_E$ for $x,y \in E$. Hence $f^\dag(\varphi)=x\varphi$ and $g^\dag(\varphi)=y\varphi$, and $f\circ g^\dag$ is the scalar that multiplies with $\inprod{x}{y}_E \in C_0(X) \subseteq C_b(X)$. Hence the inner product $\inprod{f}{g}_{E^*}=\inprod{x}{y}_E$ is well-defined. 
  It is clearly sesquilinear and positive semidefinite by Lemma~\ref{lem:stonecechscalars}.
  It is also nondegenerate: if $\inprod{f}{f}_{E^*}=0$ for $f=\inprod{x}{-}_E$, then $\inprod{x}{x}=0$, so $x=0$ and hence $f=0$.
  If $f_n$ is a Cauchy sequence in $E^*$, say $f_n=\inprod{x_n}{-}_E$, then $x_n$ is a Cauchy sequence in $E$ which converges to some $x \in E$, so $f_n$ converges to $f = \inprod{x}{-}_E$ in $E^*$.
\end{proof}

We call $E^*$ the \emph{dual} Hilbert $C$-module of $E$.

We now move from the concrete to the abstract, and define a categorical notion of dual object.

\begin{definition}\label{def:dualobjects}
  Objects $E,E^*$ in a monoidal category are called \emph{dual objects} when there are morphisms $\zeta \colon I \to E^* \otimes E$ and $\varepsilon \colon E \otimes E^* \to I$ making the following diagrams commute:
  \begin{equation}\label{eq:dualobjects}
    \begin{aligned}\begin{tikzpicture}[xscale=2]
      \node (1) at (0,1) {$E$};
      \node (2) at (.6,1) {$E \otimes I$};
      \node (3) at (2,1) {$E \otimes (E^* \otimes E)$};
      \node (4) at (2,0) {$(E \otimes E^*) \otimes E$};
      \node (5) at (.6,0) {$I \otimes E$};
      \node (6) at (0,0) {$E$};
      \draw[->] (1) to node[above]{$\rho_E^{-1}$} (2);
      \draw[->] (2) to node[above]{$\id[E] \otimes \zeta$} (3);
      \draw[->] (3) to node[left]{$\alpha_{E,E^*,E}$} (4);
      \draw[->] (4) to node[below]{$\varepsilon \otimes \id[E]$} (5);ps -
      \draw[->] (5) to node[below]{$\lambda_E$} (6);
      \draw[double distance=2pt] (6) to (1);
    \end{tikzpicture}
    \qquad
    \begin{tikzpicture}[xscale=2]
      \node (1) at (0,1) {$E^*$};
      \node (2) at (.6,1) {$I \otimes E^*$};
      \node (3) at (2,1) {$(E^* \otimes E) \otimes E^*$};
      \node (4) at (2,0) {$E^* \otimes (E \otimes E^*)$};
      \node (5) at (.6,0) {$E^* \otimes I$};
      \node (6) at (0,0) {$E^*$};
      \draw[->] (1) to node[above]{$\lambda_{E^*}^{-1}$} (2);
      \draw[->] (2) to node[above]{$\zeta \otimes \id[E^*]$} (3);
      \draw[->] (3) to node[left]{$\alpha_{E^*,E,E^*}^{-1}$} (4);
      \draw[->] (4) to node[below]{$\id[E^*] \otimes \varepsilon$} (5);ps -
      \draw[->] (5) to node[below]{$\rho_{E^*}$} (6);
      \draw[double distance=2pt] (6) to (1);
    \end{tikzpicture}\end{aligned}
  \end{equation}
  In a symmetric monoidal dagger category, dual objects are \emph{dagger dual objects} when $\zeta = \sigma \circ \varepsilon^\dag$, where $\sigma \colon E \otimes E^* \to E^* \otimes E$ is the swap map.
\end{definition}

If an object has a (dagger) dual, then that dual is unique up to unique (unitary) isomorphism. 

A priori, the two notions of dual of a Hilbert module are unrelated. We now show that the categorical notion is equivalent to the concrete notion.
In other words, we now show that dual Hilbert $C$-modules are dual objects in the finitely presented projective case over a paracompact space $X$. 

\begin{theorem}\label{thm:dualobjects}
  Let $X$ be a paracompact locally compact Hausdorff space $X$.
  For a Hilbert $C_0(X)$-module $E$, the following are equivalent:
  \begin{enumerate}
     \item[(a)] $E$ has a dagger dual object in $\cat{Hilb}_{C_0(X)}$;
     \item[(b)] $E \simeq \Gamma_0(p)$ for a finite Hilbert bundle $p$;
     \item[(c)] $E$ is finitely presented projective.
  \end{enumerate} 
\end{theorem}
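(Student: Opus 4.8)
The plan is to prove the cycle of implications (c)$\Rightarrow$(a)$\Rightarrow$(b)$\Rightarrow$(c). Throughout I would use the equivalence $\Gamma_0$ of Theorem~\ref{thm:sectionsvanishingatinfinity} to pass freely between a module $E$ and the field of Hilbert spaces representing it, whose fibres are the pointwise localisations $\Loc_t(E)$ appearing in the essential-surjectivity part of that proof.

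For (c)$\Rightarrow$(a), I would first record that the tensor unit $C_0(X)$ is its own dagger dual, and that finite dagger biproducts of dagger-dualizable objects are again dagger-dualizable (using Lemma~\ref{lem:biproducts} and that the tensor product distributes over biproducts), so that the free module $C_0(X)^n$ is dagger-dualizable. The remaining point is the purely categorical fact that a dagger retract of a dagger-dualizable object is dagger-dualizable: given $i\colon E\to C_0(X)^n$ with $i^\dag\circ i=\id[E]$, I would transport the unit and counit of $C_0(X)^n$ along $i$ and $i^\dag$, verify the snake equations of Definition~\ref{def:dualobjects} using $i^\dag\circ i=\id[E]$, and check that the dagger condition $\zeta=\sigma\circ\varepsilon^\dag$ survives because $i^\dag$ is precisely the adjoint of $i$.

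For (a)$\Rightarrow$(b), the key is that localisation at a point $t\in X$ is the strict conditional expectation given by evaluation at $t$, so by Proposition~\ref{prop:localization} the functor $\Loc_t\colon\cat{Hilb}_{C_0(X)}\to\cat{Hilb}_{\C}$ is a strong monoidal dagger functor. Strong monoidal functors preserve dual objects, and a Hilbert space is dualizable exactly when it is finite-dimensional, so every fibre $\Loc_t(E)$ is finite-dimensional. To see that the dimension is moreover bounded, I would form the categorical dimension $\varepsilon\circ\sigma\circ\zeta$ of $E$, which is a scalar and hence an element of $C_b(X)$ by Lemma~\ref{lem:stonecechscalars}; applying $\Loc_t$ shows this scalar evaluates at $t$ to $\dim_{\C}\Loc_t(E)$. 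Boundedness of the function in $C_b(X)$ then yields $\sup_t\dim_{\C}\Loc_t(E)<\infty$, so the field of Hilbert spaces representing $E$ is finite, hence a finite Hilbert bundle by Lemma~\ref{lem:fieldofhilbertspaces}.

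The implication (b)$\Rightarrow$(c) is where I expect the real work, and where paracompactness enters. Writing $E\simeq\Gamma_0(p)$ for a finite Hilbert bundle $p$, I would use local triviality to cover $X$ by open sets carrying orthonormal frames, pass to a locally finite refinement with a subordinate partition of unity, and assemble the local coframes $\inprod{s^\alpha_j}{-}$ weighted by the partition of unity into an isometric bundle embedding of $p$ into a trivial bundle; applying $\Gamma_0$ then produces $i\colon E\to C_0(X)^N$ with $i^\dag\circ i=\id[E]$. The delicate point is keeping $N$ \emph{finite}: a partition of unity supplies only local finiteness, so I would exploit that the dimension function is locally constant (Remark~\ref{rem:dimensioncontinuous}), decomposing $X$ into its clopen strata of constant rank and using the uniform dimension bound to extract a finite trivialising family, in the spirit of the Serre--Swan theorem~\cite[13.4.5]{weggeolsen:ktheory}. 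Controlling this embedding dimension uniformly is the main obstacle, and is exactly the step that would fail without the finiteness built into condition (b).
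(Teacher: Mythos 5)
Your (c)$\Rightarrow$(a) and (a)$\Rightarrow$(b) are essentially the paper's own arguments: the paper likewise splits the idempotent $i\circ i^\dag$ on the dagger self-dual object $C_0(X)^n$ and takes $(i^\dag\otimes i^\dag)\circ\zeta$ as the unit for $E$, and for (a)$\Rightarrow$(b) it localises to get finite-dimensional fibres and then bounds $\dim(E_t)=\|\varepsilon\circ\sigma\circ\zeta(t)\|\leq\|\varepsilon\|\,\|\zeta\|$, which is your $C_b(X)$ argument in different words.

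The gap is in (b)$\Rightarrow$(c), precisely at ``using the uniform dimension bound to extract a finite trivialising family''. Passing to the clopen strata of constant rank only bounds the number of strata (by $n+1$); it does nothing to make the trivialising cover of a single stratum finite, and over a non-compact paracompact base a finite Hilbert bundle of constant rank need not admit any finite trivialising cover. For instance, over $X=\coprod_{k\geq 1}\mathbb{CP}^k$ (locally compact, paracompact, Hausdorff) the disjoint union of the tautological line bundles has constant rank $1$, yet any trivialising cover restricts on $\mathbb{CP}^k$ to a trivialising cover with more than $k$ members (the cup-product argument with $c_1^k\neq 0$), so no finite family exists. Since your construction allocates a fresh block of coordinates $\inprod{s^\alpha_j}{-}$ to each chart, your $N$ is (number of charts)$\,\times\, n$ and remains infinite, and nothing in the sketch brings it down. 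The paper's proof uses a genuinely different device here: it exploits the uniform bound $n$ to extend each local trivialisation to a fibrewise coisometry $g_j\colon\C^n\times U_j\to p^{-1}(U_j)$ out of the \emph{same} rank-$n$ trivial bundle, and then superposes these with the partition of unity, $(e,t)\mapsto\sum_j g_j(e,t)f_j(t)$, obtaining a single bundle map $\C^n\times X\twoheadrightarrow E$ whose source has rank $n$ no matter how many charts the cover has. (That superposition is itself the delicate step of the paper's argument --- one must still check that the weighted sum of coisometries is coisometric --- but it is the mechanism that keeps $N$ finite, and it is absent from your proposal.) As written, your sketch does not produce an isometry $i\colon E\to C_0(X)^N$ with $N<\infty$, so the implication (b)$\Rightarrow$(c) is not established.
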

Only the implication (b)$\Rightarrow$(c) requires paracompactness; (a)$\Rightarrow$(b) and (c)$\Rightarrow$(a) hold for arbitrary locally compact Hausdorff spaces $X$.
\begin{proof}
  (a)$\Rightarrow$(b): Assume that $E$ has a dagger dual object $E^*$. 
  Then also all its localisations $E_t=\Loc_t(E)$ are dagger dualisable, and so~\cite[Corollary~19]{abramskyheunen:operational} each $E_t$ is a finite-dimensional Hilbert space.
  Now regard $E$ as a field of Hilbert spaces over $X$ as in Theorem~\ref{thm:sectionsvanishingatinfinity}. Then $\zeta \colon C_0(X) \to E^* \otimes E$ and $\varepsilon \colon E \otimes E^* \to C_0(X)$ are bundle maps and hence bounded. But then $\dim(E_t) = \|\varepsilon \circ \sigma \circ \zeta(t)\| \leq \|\varepsilon\| \|\zeta\|$ is bounded, so $E$ is a finite Hilbert bundle.

  (b)$\Rightarrow$(c): 
    Let $p \colon E \twoheadrightarrow X$ be a finite Hilbert bundle.
    Then every $t \in X$ has a neighbourhood $U_t$ and a homeomorphism $g_t \colon \mathbb{C}^{n_t} \times U_t \to p^{-1}(U_t)$ that is fibrewise unitary. This forms a cover $\{U_t\}$ of $X$.
    Because $X$ is paracompact, we may pick a locally finite refinement $U_j$, and a partition of unity $f_j \colon X \to [0,1]$ subordinate to it: $f_j$ vanishes outside $U_j$ and $\sum_j f_j(t)=1$ for all $t \in X$.
    Because $p$ is finite, the numbers $n_t$ are bounded by some $n \in \mathbb{N}$, and the functions $g_t$ extend to continuous maps $\mathbb{C}^n \times U_t \to p^{-1}(U_t)$ that fibrewise satisfy $g_t \circ g_t^\dag = \id$.
    Write $g_j$ for the restrictions $\mathbb{C}^n \times U_j \to p^{-1}(U_j)$; these are still continuous and fibrewise coisometric.
    Now $(e,t)\mapsto \sum_j g_j(e,t)f_j(t)$ defines a continuous bundle map $\mathbb{C}^n \times X \twoheadrightarrow E$ that is (fibrewise) coisometric. Thus $\Gamma_0(E)$ is finitely presented projective by Theorem~\ref{thm:sectionsvanishingatinfinity}.

  (c)$\Rightarrow$(a): Assume that $i \colon E \to C_0(X)^n$ satisfies $i^\dag \circ i = \id[E]$. First, notice that $C_0(X)$ is its own dagger dual object, and therefore so is $C_0(X)^n$. Explicitly,
  \[
    \zeta \colon C_0(X) \to \big(\bigoplus\nolimits_{i=1}^n C_0(X)\big) \otimes \big(\bigoplus\nolimits_{j=1}^n C_0(X)\big) \simeq \bigoplus\nolimits_{i,j=1}^n C_0(X)
  \]
  sends $f \in C_0(X)$ to $(\delta_{i,j}f)_{i,j} \in \bigoplus_{i,j=1}^n C_0(X)$.
  Thus $(\id \otimes (i \circ i^\dag)) \circ \zeta = ((i \circ i^\dag) \otimes \id) \circ \zeta$ because it holds at each $t \in X$ and therefore globally by Theorem~\ref{thm:sectionsvanishingatinfinity}. 
  It follows that the idempotent $(\id \otimes \varepsilon) \circ (\id \otimes (i \circ i^\dag) \circ \id) \otimes (\zeta \otimes \id) \colon C_0(X)^n \to C_0(X)^n$ is split by $i \colon E \to C_0(X)^n$. 
  The unit $(i^\dag \otimes i^\dag) \circ \zeta \colon C_0(X) \to E \otimes E$ now witnesses that $E$ is a dagger dual object of $E$.
\end{proof}

Finally, we show what restricting to dual objects means concretely in terms of bundles, as described in the previous section.

\begin{theorem}\label{thm:takahashi}
  There is a monoidal equivalence of compact (dagger) categories
  \[
    \cat{FHilbBundle}_X
    \stackrel{\Gamma_0}{\longrightarrow}
    \cat{FHilb}_{C_0(X)}
  \]
  for any paracompact locally compact Hausdorff space $X$.
\end{theorem}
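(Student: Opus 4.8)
The plan is to obtain this theorem as a restriction of the monoidal dagger equivalence $\Gamma_0 \colon \cat{FieldHilb}_X \to \cat{Hilb}_{C_0(X)}$ already established in Theorem~\ref{thm:sectionsvanishingatinfinity} and Corollary~\ref{cor:hilbertbundlemonoidal}, cutting it down to the full subcategories of finite Hilbert bundles and of finitely presented projective modules. The essential observation is that Theorem~\ref{thm:dualobjects} identifies exactly these two subcategories under $\Gamma_0$: by the implication (b)$\Rightarrow$(c), $\Gamma_0$ sends a finite Hilbert bundle to a finitely presented projective module, so $\Gamma_0$ does restrict to a functor $\cat{FHilbBundle}_X \to \cat{FHilb}_{C_0(X)}$; and by (c)$\Rightarrow$(b), every finitely presented projective module is isomorphic to $\Gamma_0(p)$ for some finite Hilbert bundle $p$, which yields essential surjectivity of the restriction.

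First I would record that the restricted functor is full and faithful. This is immediate: by Definition~\ref{def:bundlemap} and the discussion following Lemma~\ref{lem:finitelypresentedmapsadjointable} (which makes bounded and adjointable maps coincide on these objects), $\cat{FHilbBundle}_X$ and $\cat{FHilb}_{C_0(X)}$ are full subcategories of $\cat{FieldHilb}_X$ and $\cat{Hilb}_{C_0(X)}$ respectively, and the restriction of a full and faithful functor to full subcategories remains full and faithful. Combined with the essential surjectivity noted above, this already shows that $\Gamma_0$ restricts to an equivalence of categories.

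Next I would verify that the symmetric monoidal dagger structure of Corollary~\ref{cor:hilbertbundlemonoidal} restricts to these subcategories, so that the equivalence is monoidal. The point is that finiteness is closed under the relevant operations: the monoidal unit $C_0(X)$ corresponds to the trivial line bundle, which is finite, and for finite bundles the fibrewise tensor product satisfies $\dim(E_t \otimes F_t) = \dim(E_t)\,\dim(F_t)$, so $E \otimes F$ is again finite whenever $E$ and $F$ are. Via Lemma~\ref{lem:fieldofhilbertspaces} this is precisely the monoidal product already described in Corollary~\ref{cor:hilbertbundlemonoidal}. Since $\Gamma_0$ is strong monoidal and dagger-preserving on the ambient categories, the restriction inherits these properties, giving a strong monoidal dagger equivalence.

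Finally I would observe that both categories are dagger compact, so that the result is genuinely an equivalence of compact (dagger) categories. By Theorem~\ref{thm:dualobjects} (c)$\Rightarrow$(a), every object of $\cat{FHilb}_{C_0(X)}$ has a dagger dual object, so this category is dagger compact; since a strong monoidal equivalence preserves and reflects dual objects, $\cat{FHilbBundle}_X$ is likewise dagger compact. Because $\Gamma_0$ is strong monoidal and preserves daggers, it automatically respects the compact structure, and no separate compatibility check is required. I expect essentially no obstacle at this stage: all the analytic work lives in the earlier results, and the only genuine content of the theorem — the matching of finite Hilbert bundles with finitely presented projective modules — is exactly the force of Theorem~\ref{thm:dualobjects}.
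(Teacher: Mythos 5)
Your proposal is correct and follows essentially the same route as the paper: both arguments hinge on Theorem~\ref{thm:dualobjects} to match finite Hilbert bundles with finitely presented projective modules, inherit the equivalence from Theorem~\ref{thm:sectionsvanishingatinfinity}, and pull the monoidal dagger structure from Corollary~\ref{cor:hilbertbundlemonoidal}. The only cosmetic difference is that the paper identifies $\cat{FHilbBundle}_X$ as the inverse image under $\Gamma_0$ of the compact part of $\cat{Hilb}_{C_0(X)}$ (with duals given fibrewise), whereas you check the object correspondence and closure under tensor directly; the content is the same.
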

\begin{proof}
  By Theorem~\ref{thm:dualobjects}, the monoidal subcategory $\cat{FHilb}_{C_0(X)}$ of $\cat{Hilb}_{C_0(X)}$ is compact.
  Because (strong) monoidal functors preserve dual objects, the inverse image under $\Gamma_0$ in $\cat{FieldHilb}_X\bd$ is also compact by Corollary~\ref{cor:hilbertbundlemonoidal}.
  The dual of $E \to X$ is given by $(E^*)_t=(E_t)^*$ (with topology given by~\cite[II.1.15]{daunshofmann:sections}).
  By Lemma~\ref{lem:fieldofhilbertspaces} the functor $\Gamma_0$ therefore restricts as in the statement, and is an equivalence by Theorem~\ref{thm:sectionsvanishingatinfinity}.
\end{proof}

It follows that the tensor product of Proposition~\ref{prop:monoidal} of finitely presented projective Hilbert modules is again finitely presented projective, and so that $\cat{FHilb}_{C_0(X)}$ is a symmetric monoidal dagger category.
It is also easy to see that $\cat{FHilb}_{C_0(X)}$ has finite dagger biproducts.

\section{Frobenius structures}\label{sec:frobenius}

We now start the study of dagger Frobenius structures in the category $\cat{Hilb}_{C_0(X)}$. Many of the results below hold for arbitrary (non-dagger) Frobenius structures, but we focus on dagger Frobenius structures, and leave open the generalisation to purely algebraic proofs.
We will occassionally use the graphical calculus, in which dagger becomes horizontal reflection, tensor product becomes drawing side by side, and composition becomes vertical stacking. For more details we refer to~\cite{selinger:graphicallanguages}.
Let's start with the definitions.

\begin{definition}
  A \emph{dagger Frobenius structure} in $\cat{Hilb}_C$ is a Hilbert $C$-module $E$ with morphisms $\eta \colon C \to E$ and $\mu \colon E \otimes E \to E$ satisfying:
  \begin{align*}
    \mu \circ (\eta \otimes \id) = &\; \id = \mu \circ (\id \otimes \eta), \\
    \mu \circ (\mu \otimes \id) & = \mu \circ (\id \otimes \mu), \\
  (\mu \otimes \id) \circ (\id \otimes \mu^\dag) & = (\id \otimes \mu) \circ (\mu^\dag \otimes \id).
  \end{align*}
  or graphically:
   \begin{align*}
    \begin{pic}[yscale=-.3,xscale=-.3]
    \node[dot] (t) at (0.3,1) {};
    \node[dot] (b) at (1.3,-.5) {};
    \draw (b.east) to [out=180,in=-90] (t.north);
    \draw (b.north) to +(0,-1);
    \draw (b.west) to [out=0,in=-90] (2.3,1) to +(0,1);
   \end{pic}
   =
   \begin{pic}
    \draw (0,0) to (0,1.1);
   \end{pic}
   =
   \begin{pic}[yscale=-.3,xscale=.3]
    \node[dot] (t) at (0.3,1) {};
    \node[dot] (b) at (1.3,-.5) {};
    \draw (b.west) to [out=180,in=-90] (t.north);
    \draw (b.north) to +(0,-1);
    \draw (b.east) to [out=0,in=-90] (2.3,1) to +(0,1);
   \end{pic}
   \qquad\quad
   \begin{pic}[xscale=.3,yscale=-.3]
    \node[dot] (t) at (0,1) {};
    \node[dot] (b) at (1.3,-.5) {};
    \draw (t.west) to[out=180,in=-90] (-1,2.5);
    \draw (t.east) to[out=0,in=-90] (1,2.5);
    \draw (b.west) to[out=180,in=-90] (t.north);
    \draw (b.north) to (1.3,-1.5);
    \draw (b.east) to[out=0,in=-90] (3,2.5);
   \end{pic}
   =
   \begin{pic}[yscale=-.3,xscale=-.3]
    \node[dot] (t) at (0,1) {};
    \node[dot] (b) at (1.3,-.5) {};
    \draw (t.east) to[out=180,in=-90] (-1,2.5);
    \draw (t.west) to[out=0,in=-90] (1,2.5);
    \draw (b.east) to[out=180,in=-90] (t.north);
    \draw (b.north) to (1.3,-1.5);
    \draw (b.west) to[out=0,in=-90] (3,2.5);
   \end{pic}
   \qquad\quad
    \begin{pic}[yscale=0.5,xscale=.75]
          \node (0) at (0,0) {};
          \node (0a) at (0,1) {};
          \node [dot] (1) at (0.5,2) {};
          \node [dot] (2) at (1.5,1) {};
          \node (3) at (1.5,0) {};
          \node (4) at (2,3) {};
          \node (4a) at (2,2) {};
          \node (5) at (0.5,3) {};
          \draw (0) to (0a.center);
          \draw [out=90, in=180] (0a.center) to (1.west);
          \draw [out=0, in=180] (1.east) to (2.west);
          \draw [out=0, in=270] (2.east) to (4a.center);
          \draw (4a.center) to (4);
          \draw (2.south) to (3);
          \draw (1.north) to (5);
    \end{pic}
    =
    \begin{pic}[yscale=0.5,xscale=-.75]
          \node (0) at (0,0) {}; 
          \node (0a) at (0,1) {};
          \node [dot] (1) at (0.5,2) {};
          \node [dot] (2) at (1.5,1) {};
          \node (3) at (1.5,0) {};
          \node (4) at (2,3) {};
          \node (4a) at (2,2) {};
          \node (5) at (0.5,3) {};
          \draw (0) to (0a.center);
          \draw [out=90, in=180] (0a.center) to (1.east);
          \draw [out=0, in=180] (1.west) to (2.east);
          \draw [out=0, in=270] (2.west) to (4a.center);
          \draw (4a.center) to (4);
          \draw (2.south) to (3);
          \draw (1.north) to (5);
      \end{pic}
  \end{align*}
  A morphism $d \colon E \to E$ is called \emph{central} when $\mu \circ (\id[E] \otimes d) = d \circ \mu = \mu \circ (d \otimes \id[E])$.
  \[
    \begin{pic}[scale=.4]
      \node[dot] (d) at (0,0) {};
      \node[morphism] (m) at (1,-1) {$d$};
      \draw (d.north) to +(0,1);
      \draw (d.east) to[out=0,in=90] (m.north);
      \draw (m.south) to +(0,-.6);
      \draw (d.west) to[out=180,in=90] (-1,-1) to (-1,-2);
    \end{pic}
    =
    \begin{pic}[scale=.4]
      \node[dot] (d) at (0,0) {};
      \node[morphism] (m) at (0,1) {$d$};
      \draw (d.north) to (m.south);
      \draw (m.north) to +(0,.6);
      \draw (d.west) to[out=180,in=90] +(-1,-1.2);
      \draw (d.east) to[out=0,in=90] +(1,-1.2);
    \end{pic}    
    =
    \begin{pic}[yscale=.4,xscale=-.4]
      \node[dot] (d) at (0,0) {};
      \node[morphism] (m) at (1,-1) {$d$};
      \draw (d.north) to +(0,1);
      \draw (d.west) to[out=0,in=90] (m.north);
      \draw (m.south) to +(0,-.6);
      \draw (d.east) to[out=180,in=90] (-1,-1) to (-1,-2);
    \end{pic}
  \]
  The dagger Frobenius structure $(E,\mu,\eta)$ is called:
  \begin{itemize}
    \item \emph{commutative} when $\mu \circ \sigma = \mu$;     
      \[
        \begin{pic}[xscale=.3,yscale=-.3]
          \node[dot] (d) {};
          \draw (d.north) to +(0,-1);
          \draw (d.west) to[out=180,in=-90] +(-1,1) to[out=90,in=-90] +(2.5,1.5);
          \draw (d.east) to[out=0,in=-90] +(1,1) to[out=90,in=-90] +(-2.5,1.5);
        \end{pic}
        =
        \begin{pic}[xscale=.3,yscale=-.3]
          \node[dot] (d) {};
          \draw (d.north) to +(0,-1);
          \draw (d.east) to[out=0,in=-90] +(1,2.5);
          \draw (d.west) to[out=180,in=-90] +(-1,2.5);
        \end{pic}
      \]
    \item \emph{special} when $\mu \circ \mu^\dag = \id[E]$; 
      \[
        \begin{pic}[scale=.5]
            \node (0) at (0,0) {};
            \node [dot] (1) at (0,1) {};
            \node [dot] (2) at (0,2) {};
            \node (3) at (0,3) {};
            \draw (0) to (1.south);
            \draw (2.north) to (3);
            \draw [in=left, out=left, looseness=1.5] (1.west) to (2.west);
            \draw [in=right, out=right, looseness=1.5] (1.east) to (2.east);
        \end{pic}
        =
        \begin{pic}
            \draw (0,0) to (0,1.25);
        \end{pic}
      \]
    \item \emph{specialisable} when $\mu \circ (d \otimes d) \circ \mu^\dag = \id[E]$ for a central isomorphism $d = d^\dag$, called the \emph{specialiser};
      \[
        \begin{pic}[scale=.4]
          \node[dot] (b) at (0,-.25) {};
          \node[morphism] (l) at (-1,1) {$d$};
          \node[morphism] (r) at (1,1) {$d$};
          \node[dot] (t) at (0,2.25) {};
          \draw (t.north) to +(0,.7);
          \draw (b.south) to +(0,-.7);
          \draw (b.west) to[out=180,in=-90] (l.south);
          \draw (l.north) to[out=90,in=180] (t.west);
          \draw (b.east) to[out=0,in=-90] (r.south);
          \draw (r.north) to[out=90,in=0] (t.east);
        \end{pic}
        =
        \begin{pic}
          \draw (0,0) to (0,1.75);
        \end{pic}
      \]
    \item \emph{nondegenerate} when $\eta^\dag \circ \eta$ is invertible.
      \[ 
        \begin{pic}[scale=.5]
          \node[dot] (b) at (0,0) {};
          \node[dot] (t) at (0,1) {};
          \draw (b.north) to (t.south);
        \end{pic}
      \]
   \end{itemize} 
\end{definition}
It follows from the axioms of Frobenius structures that they in fact satisfy the \emph{strong Frobenius law}:
\[
  (\mu \otimes \id) \circ (\id \otimes \mu^\dag) = \mu^\dag \circ \mu = (\id \otimes \mu) \circ (\mu^\dag \otimes \id)
\]
or graphically:
\[
    \begin{pic}[yscale=0.5,xscale=.75]
          \node (0) at (0,0) {};
          \node (0a) at (0,1) {};
          \node [dot] (1) at (0.5,2) {};
          \node [dot] (2) at (1.5,1) {};
          \node (3) at (1.5,0) {};
          \node (4) at (2,3) {};
          \node (4a) at (2,2) {};
          \node (5) at (0.5,3) {};
          \draw (0) to (0a.center);
          \draw [out=90, in=180] (0a.center) to (1.west);
          \draw [out=0, in=180] (1.east) to (2.west);
          \draw [out=0, in=270] (2.east) to (4a.center);
          \draw (4a.center) to (4);
          \draw (2.south) to (3);
          \draw (1.north) to (5);
    \end{pic}
    =
    \begin{pic}[scale=.4]
      \node[dot] (t) at (0,1) {};
      \node[dot] (b) at (0,0) {};
      \draw (b) to (t);
      \draw (t) to[out=180,in=-90] +(-1,1);
      \draw (t) to[out=0,in=-90] +(1,1);
      \draw (b) to[out=180,in=90] +(-1,-1);
      \draw (b) to[out=0,in=90] +(1,-1);
    \end{pic}
    =
    \begin{pic}[yscale=0.5,xscale=-.75]
          \node (0) at (0,0) {}; 
          \node (0a) at (0,1) {};
          \node [dot] (1) at (0.5,2) {};
          \node [dot] (2) at (1.5,1) {};
          \node (3) at (1.5,0) {};
          \node (4) at (2,3) {};
          \node (4a) at (2,2) {};
          \node (5) at (0.5,3) {};
          \draw (0) to (0a.center);
          \draw [out=90, in=180] (0a.center) to (1.east);
          \draw [out=0, in=180] (1.west) to (2.east);
          \draw [out=0, in=270] (2.west) to (4a.center);
          \draw (4a.center) to (4);
          \draw (2.south) to (3);
          \draw (1.north) to (5);
      \end{pic}
\]

Dagger Frobenius structures are their own dagger dual, with unit $\mu^\dag \circ \eta \colon I \to E \otimes E$. 
Hence dagger Frobenius structures in $\cat{Hilb}_{C_0(X)}$ live in $\cat{FHilb}_{C_0(X)}$ for paracompact $X$. 
Observe that specialisable dagger Frobenius structures are symmetric; see~\cite[Proposition~2.7]{coeckeheunenkissinger:cpstar} and notice that our notion of specialisability implies the notion of normalisability used there.

To provide some intuition we now consider the paradigmatic example of a dagger Frobenius structure.

\begin{remark}\label{rem:frobinhilbiscstar}
  For $C=\mathbb{C}$, special dagger Frobenius structures correspond to finite-dimensional C*-algebras~\cite{vicary:quantumalgebras}. 
  Any dagger Frobenius structure $E$ in $\cat{Hilb}_C$ has an involution $i \colon E \to E^*$ given by $(\id \otimes \eta^\dag) \circ (\id \otimes \mu) \circ (\zeta \otimes \id)$~\cite[4.4]{vicary:quantumalgebras}.
  In the graphical calculus, $\mu$ is drawn as $\tinymult$, and $\eta$ as $\,\tinyunit$.
  The involution is thus drawn as follows.
  \begin{equation}\label{eq:involution}
   \begin{pic}[scale=.6]
    \node[dot] (m) at (0,0) {};
    \draw (m.north) to +(0,.3) node[dot]{};
    \draw (m.east) to[out=0,in=90] +(.5,-.5) to +(0,-.5);
    \draw (m.west) to[out=180,in=90] +(-.5,-.5) to[out=-90,in=-90] +(-1,0) to +(0,1.25);
   \end{pic}
  \end{equation}
  One of our first aims is to generalise this to arbitrary $C$. 
\end{remark}

Next we define the appropriate morphisms making dagger Frobenius structures with various properties into categories.

\begin{definition}
  A \emph{$*$-homomorphism} between Frobenius structures in $\cat{Hilb}_C$ is a morphism $f$ that preserves the involution~\eqref{eq:involution} and the multiplication: $\mu \circ (f \otimes f) = f \circ \mu$, and $f_* \circ i = i \circ f$.
  Write $\cat{Frob}_C$ for the category of specialisable dagger Frobenius structures in $\cat{Hilb}_C$ with $*$-homomorphisms, and $\cat{SFrob}_C$ for the full subcategory of special dagger Frobenius structures.
\end{definition}

The following proposition shows that, categorically, studying special Frobenius structures teaches us all about specialisable ones.

\begin{proposition}\label{prop:normal}
  The categories $\cat{Frob}_C$ and $\cat{SFrob}_C$ are monoidally equivalent (via the inclusion of the latter into the former).
\end{proposition}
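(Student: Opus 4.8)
Since $\cat{SFrob}_C$ is by definition a \emph{full} subcategory of $\cat{Frob}_C$, the inclusion functor is automatically fully faithful, and it is strict monoidal because the class of special structures is closed under the tensor product (the swaps in $\mu_{E\otimes F}$ cancel against those in $\mu_{E\otimes F}^\dag$) and contains the monoidal unit. Hence the whole statement reduces to showing that the inclusion is \emph{essentially surjective}: every specialisable dagger Frobenius structure is isomorphic, by an invertible $*$-homomorphism, to a special one on the same object. The plan is to manufacture that special structure by symmetrically rescaling $\mu$ with a central positive root.

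First I would extract a canonical, \emph{positive} specialiser. Given the specialiser $d$ (central, self-adjoint, invertible), centrality lets me slide a factor through the multiplication: from $\mu\circ(\id\otimes d)=d\circ\mu=\mu\circ(d\otimes\id)$ one gets $\mu\circ(d\otimes d)=d^2\circ\mu$, so the specialisability equation becomes $d^2\circ\mu\circ\mu^\dag=\id$. Writing $z:=\mu\circ\mu^\dag\colon E\to E$, this shows $z=d^{-2}$, hence $z$ is central, self-adjoint and strictly positive (being $(d^{-1})^\dag\circ d^{-1}$ with $d^{-1}$ invertible). Because a dagger Frobenius structure lies in $\cat{FHilb}_{C_0(X)}$, the endomorphisms $\cat{Hilb}_{C_0(X)}(E,E)$ form a unital C*-algebra in which the central morphisms constitute a commutative C*-subalgebra; continuous functional calculus therefore produces central, positive, self-adjoint roots $c:=z^{-1/4}$ and $d:=c^2=z^{-1/2}$, which again satisfy the sliding identities.

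Next I would define the candidate special structure on the same object $E$ by $\mu':=\mu\circ(c\otimes c)$ and $\eta':=d^{-1}\circ\eta$, with dagger $\mu'^\dag=(c\otimes c)\circ\mu^\dag$ (using $c=c^\dag$). The unit, associativity and Frobenius laws for $(E,\mu',\eta')$ all follow from those of $(E,\mu,\eta)$ by repeatedly sliding the central factors $c^{\pm1}$ past $\mu$ and $\mu^\dag$; speciality is then immediate, $\mu'\circ\mu'^\dag=\mu\circ(c^2\otimes c^2)\circ\mu^\dag=c^4\circ\mu\circ\mu^\dag=z^{-1}\circ z=\id$. Finally I would check that the central morphism $f:=d=z^{-1/2}$ is an invertible $*$-homomorphism $(E,\mu',\eta')\to(E,\mu,\eta)$: it is multiplicative, since $\mu\circ(f\otimes f)=c^4\circ\mu$ equals $f\circ\mu'=c^2\circ\mu\circ(c\otimes c)=c^4\circ\mu$; it preserves the unit, as $f\circ\eta'=d\circ d^{-1}\circ\eta=\eta$; and it preserves the involution~\eqref{eq:involution} because $f$ is central and self-adjoint. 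Its inverse is $z^{1/2}$, so $(E,\mu,\eta)\cong(E,\mu',\eta')$ in $\cat{Frob}_C$ with the latter special, establishing essential surjectivity.

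The routine heart of the argument, and the step most likely to hide subtleties, is verifying that $(E,\mu',\eta')$ genuinely obeys the Frobenius law and that $f$ respects the involution $i$ of~\eqref{eq:involution}; both reduce to the bookkeeping of commuting the central roots $c^{\pm1}$ through $\mu$ and $\mu^\dag$, which is purely formal once functional calculus is in place. The one genuinely non-formal ingredient is that functional calculus is available at all: this rests on $E$ being dualisable (so its endomorphisms form a C*-algebra) and on the central morphisms forming a C*-subalgebra closed under roots, which is precisely what makes the square roots $c$ and $d$ central and lets the rescaling stay inside the dagger category.
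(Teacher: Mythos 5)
Your proof is correct and follows essentially the same route as the paper: rescale the multiplication and unit by the specialiser (your $\mu'=\mu\circ(c\otimes c)$ equals the paper's $\mu'=d\circ\mu$ by centrality since $c^2=d$, and $\eta'=d^{-1}\circ\eta$ is identical), and observe that $d$ itself is the required invertible $*$-homomorphism. The paper simply works with the given specialiser directly rather than manufacturing a positive one via functional calculus, but that extra detour is harmless.
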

\begin{proof}
  Any specialisable dagger Frobenius structure $(E,\mu,\eta)$ is isomorphic to a special one. Namely, let $d$ be the specialiser and define
  $\mu' = d \circ \mu$ and $\eta' = d^{-1} \circ \eta$.
  It is easy to check that $(E,\mu',\eta')$ is then a special dagger Frobenius structure, and that $d \colon (E,\mu',\eta') \to (E,\mu,\eta)$ and $d^{-1} \colon (E,\mu,\eta) \to (E,\mu',\eta')$ are $*$-homomorphisms that are each other's inverse. See also~\cite[Lemma~1.2]{heunenkissingerselinger:cpproj}.
\end{proof}

The following lemma observes that Frobenius structures behave well under localization, as discussed in Section~\ref{sec:tensor}.

\begin{lemma}\label{lem:localization}
  If $E$ is a special dagger Frobenius structure in $\cat{Hilb}_{C_0(X)}$, then all its localizations at $t \in X$ are dagger Frobenius structures in $\cat{Hilb}$, and hence finite-dimensional C*-algebras.
\end{lemma}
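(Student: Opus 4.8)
The plan is to exhibit each localization as the image of $E$ under a strong monoidal dagger functor into $\cat{Hilb}_{\C} = \cat{Hilb}$, and then appeal to the known description of special dagger Frobenius structures over $\C$.

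First I would recall that localization at a point $t \in X$ arises from a strict conditional expectation. As in the Example following Proposition~\ref{prop:localization}, evaluation $f \colon C_0(X) \to \C$ at $t$ together with the canonical $*$-homomorphism $\C \to M(C_0(X))$ constitutes a conditional expectation, and it is strict because $f$ is multiplicative. Proposition~\ref{prop:localization} then provides a strong monoidal functor $\Loc_t \colon \cat{Hilb}_{C_0(X)} \to \cat{Hilb}$ that preserves daggers.

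Next I would transport the Frobenius structure along $\Loc_t$. Writing $\phi$ for the unitary comparison $\Loc_t(E) \otimes \Loc_t(E) \to \Loc_t(E \otimes E)$ and $\psi$ for the unit comparison, the object $\Loc_t(E)$ inherits the multiplication $\Loc_t(\mu) \circ \phi$ and unit $\Loc_t(\eta) \circ \psi$. Since $\Loc_t$ preserves composition, identities, tensor products, daggers, and---through the coherent unitaries $\phi$ and $\psi$---the associator and unitors, each defining equation of $E$ (associativity, unitality, the Frobenius law, and the special law $\mu \circ \mu^\dag = \id[E]$) maps to the corresponding equation for $\Loc_t(E)$. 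Hence $\Loc_t(E)$ is a special dagger Frobenius structure in $\cat{Hilb}$. Finally, Remark~\ref{rem:frobinhilbiscstar} identifies special dagger Frobenius structures in $\cat{Hilb} = \cat{Hilb}_{\C}$ with finite-dimensional C*-algebras~\cite{vicary:quantumalgebras}, yielding the claim.

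The main obstacle is the transport step: one must keep track of the coherence comparisons $\phi$ and $\psi$ so that the induced maps satisfy the Frobenius axioms on the nose rather than up to some uncancelled isomorphism. Because these comparisons are unitary and obey the standard monoidal-functor coherence, this reduces to routine diagram-chasing (equivalently, to the general fact that strong monoidal dagger functors preserve dagger Frobenius structures), so no genuinely new difficulty arises.
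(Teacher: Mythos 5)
Your proposal is correct and follows the same route as the paper: the paper's proof likewise invokes the strong monoidal dagger functor $\Loc_t$ of Proposition~\ref{prop:localization} (arising from the strict conditional expectation of evaluation at $t$) and observes that such functors preserve dagger Frobenius structures and speciality, then identifies the result with a finite-dimensional C*-algebra. You merely spell out the coherence bookkeeping that the paper leaves implicit.
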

\begin{proof}
  Consider the (strong) monoidal dagger functor $\cat{Hilb}_{C_0(X)} \to \cat{Hilb}$ of Proposition~\ref{prop:localization} for each $t \in X$. Such functors preserve dagger Frobenius structures, as well as speciality and specialisability. 
\end{proof}

The following two examples look at one paradigmatic way to construct Frobenius structures in the category of Hilbert modules, generalising Remark~\ref{rem:frobinhilbiscstar}.

\begin{example}\label{ex:matrixalgebra}
  Any finite-dimensional C*-algebra $A$ is a special dagger Frobenius structure in $\cat{FHilb}$, and gives rise to a special dagger Frobenius structure $C_0(X,A)$ in $\cat{Hilb}_{C_0(X)}$ over a locally compact Hausdorff space $X$. Frobenius structures isomorphic to one of this form are called \emph{trivial}.

  In particular, $\M_n(C_0(X)) \simeq C_0(X,\M_n)$ is a special dagger Frobenius structures in $\cat{Hilb}_{C_0(X)}$. 
  It follows from Lemma~\ref{lem:biproducts} that direct sums of such matrix algebras are special dagger Frobenius structures in $\cat{Hilb}_{C_0(X)}$, too, and up to isomorphism this accounts for all trivial Frobenius structures.
\end{example}

\begin{example}\label{ex:endomorphismalgebra}
  If $X$ is a paracompact locally compact Hausdorff space, and $E$ is a finitely presented projective Hilbert $C_0(X)$-module, then $\L(E) = E^* \otimes E$ is a specialisable dagger Frobenius structure.
\end{example}
\begin{proof}
  This follows from Theorem~\ref{thm:dualobjects} and \cite[Proposition~2.11]{coeckeheunenkissinger:cpstar}; take multiplication $\id[E^*] \otimes \varepsilon \otimes \id[E] \colon E^* \otimes E \otimes E^* \otimes E \to E^* \otimes E$ and unit $\eta \colon C_0(X) \to E^* \otimes E$.
\end{proof}

Notice that trivial Frobenius structures in $\cat{Hilb}_{C_0(X)}$ in general need not be direct summands of $C_0(X)^n$. There are endomorphism algebras that are not direct sums of matrix algebras in $\cat{Hilb}_{C(X)}$. For example, take $X=2$. Then $\mathbb{M}_n(\mathbb{C})$ is a corner algebra of $\mathbb{M}_n(\C^2)$, but it is not isomorphic to a direct summand of the latter. It is nevertheless the endomorphism algebra of the Hilbert $C(X)$-module $\C^n$, but still trivial as a Frobenius structure.

The rest of this section develops nontrivial examples of commutative and central dagger Frobenius structures in $\cat{Hilb}_{C_0(X)}$. We need some topological preliminaries.

\begin{definition}\label{def:covering}
  A \emph{bundle} is a continuous surjection $p \colon Y \twoheadrightarrow X$ between topological spaces.
  Write $Y_U = p^{-1}(U)$ for $U \subseteq X$, and $Y_t=p^{-1}(t)$ for the \emph{fibre} over $t \in X$.
  The bundle is \emph{finite} when there is a natural number $n$ such that all fibres have cardinality at most $n$. 
  A \emph{(local) section} over $U$ is a continuous function $s \colon U \to Y$ satisfying $p \circ s = \id[U]$; a \emph{global section} is a section over $X$.
  A bundle is a \emph{covering} when every $t \in X$ has an open neighbourhood $U \subseteq X$ such that $Y_U$ is a union of disjoint open sets that are each mapped homeomorphically onto $U$ by $p$. 
\end{definition}

\begin{example}\label{ex:doublecouver}
  Write $S^1=\{ z \in \C \mid |z|=1 \}$ for the unit circle. For any natural number $n$, the map $p \colon S^1 \to S^1$ given by $p(z)=z^n$ is a finite covering.
  \[
    \begin{tikzpicture}[scale=.8]
      \draw[double=black,line width=3pt, double distance=1pt, draw=white, text=black] (0,2.9) to (0.15,2.9) to[out=0,in=90,looseness=.8] (2,.8) to (2,.75);
        
      \draw[double=black,line width=3pt, double distance=1pt, draw=white, text=black] (2,.75)
        to[out=-90,in=0,looseness=.7] (0,0)
        to[out=180,in=-90,looseness=.8] (-2,1)
        to[out=90,in=180,looseness=.7] (0,2);

      \draw[double=black,line width=3pt, double distance=1pt, draw=white, text=black] (0,2.9)
        to[out=180,in=90,looseness=.7] (-2,2)
        to[out=-90,in=180,looseness=.7] (0,1) 
        to[out=0,in=-90,looseness=.7] (2,1.5)
        to[out=90,in=0,looseness=.7] (0,2);

      \draw[line width=1pt] (0,-2) ellipse (2 and .7);

      \draw[line width=1pt,->] (0,-.25) to node[right]{$p$} (0,-1);
      \node[left] at (-2,-2) {$X$};
      \node[left] at (-2,1.25) {$Y$};

      \draw[line width=1pt,|->] (3,1) to (3,-1.5);
      \node[above] at (3,1) {$z$};
      \node[below] at (3,-1.5) {$z^2$};
    \end{tikzpicture}
  \]
  The map $z \mapsto z^n$ is also a finite covering on the unit disc $\{ z \in \C \mid |z| \leq 1 \}$.
\end{example}



\begin{lemma}\label{lem:diagonalclopen}
  If $p \colon Y \to X$ is a finite covering between Hausdorff spaces, then the diagonal $\Delta_Y = \{(y,y) \mid y \in Y\}$ is a closed and open subset of the pullback $Y \times_X Y =\{(y,y') \in Y \times Y \mid p(y)=p(y')\}$ of $p$ along itself.
\end{lemma}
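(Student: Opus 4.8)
The plan is to handle closedness and openness separately, since they draw on different hypotheses, and neither half is difficult once the right structure is isolated. For closedness I would use only that $Y$ is Hausdorff. Then the full diagonal $\{(y,y') \in Y \times Y \mid y = y'\}$ is closed in the product $Y \times Y$. Since $\Delta_Y$ is automatically contained in the subspace $Y \times_X Y$ (the condition $y=y'$ forces $p(y)=p(y')$), it coincides with the intersection of this closed diagonal with $Y \times_X Y$, and is therefore closed in the subspace topology on $Y \times_X Y$.

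For openness I would invoke the covering property locally. Fix a point $(y,y) \in \Delta_Y$ and set $t = p(y)$. The covering condition supplies an open neighbourhood $U \subseteq X$ of $t$ such that $Y_U = p^{-1}(U)$ is a disjoint union of open sheets, each mapped homeomorphically onto $U$ by $p$; let $V$ be the (unique) sheet containing $y$. Then $(V \times V) \cap (Y \times_X Y)$ is an open neighbourhood of $(y,y)$ in $Y \times_X Y$, and I claim it is contained in $\Delta_Y$. Indeed, if $(a,b)$ lies in it, then $a,b \in V$ and $p(a)=p(b)$; but $p$ restricted to $V$ is a homeomorphism onto $U$, hence injective, so $a=b$. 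Thus every point of $\Delta_Y$ has a neighbourhood contained in $\Delta_Y$, and $\Delta_Y$ is open.

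There is no genuine obstacle here; the only thing to get right is recognising which hypothesis powers which half. Closedness is pure Hausdorffness, while openness is exactly the injectivity of $p$ on a single sheet of the covering. It is worth remarking that finiteness of the covering plays no role in this particular statement—the argument goes through for any covering—though finiteness is of course the relevant setting for the subsequent characterisation of commutative Frobenius structures.
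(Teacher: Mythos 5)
Your proof is correct and follows essentially the same route as the paper: closedness from Hausdorffness of $Y$ via intersecting the full diagonal with $Y \times_X Y$, and openness by taking a single sheet $V$ over a trivialising neighbourhood and observing that $(V \times V) \cap (Y \times_X Y) \subseteq \Delta_Y$ because $p$ is injective on $V$. Your closing remark that finiteness of the covering is not needed here is accurate; the paper's proof enumerates the finite fibre but likewise only ever uses the one sheet containing the given point.
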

\begin{proof}
  Because $Y$ is Hausdorff, the diagonal $\Delta_Y$ is closed in $Y \times Y$, and hence also in its closed subspace $Y \times_X Y$.
  To see that $\Delta_Y$ is also open, let $y \in Y$, say $p(y)=t$. Then the points $p^{-1}(t)=\{y_1,\ldots,y_n\}$ are distinguished by disjoint open neighbourhoods $V_1,\ldots,V_n\subseteq Y$ that are all mapped homeomorphically onto $U$ by $p$. Say $y=y_i$. Set $V = (V_i \times V_i) \cap (Y \times_X Y)$. 
  Then $(y,y) \in V$, and $V$ is open in $Y \times_X Y$ by definition of the pullback topology.
  If $v,v' \in V_i$ satisfy $p(v)=p(v')$ then $v=v'$ because $p \colon V_i \to U$ is a homeomorphism, so $V$ is contained in $\Delta_Y$.
\end{proof}

After this topological preparation, we can now construct nontrivial Frobenius structures in the category of Hilbert modules. Later, in Section~\ref{sec:commutativity}, it will turn out that any commutative Frobenius structure arises this way.

\begin{example}\label{ex:CYoverCX}
  If $p \colon Y \twoheadrightarrow X$ is a covering between locally compact Hausdorff spaces, then $C_0(Y)$ is a right $C_0(X)$-module with scalar multiplication $C_0(Y) \times C_0(X) \to C_0(Y)$ given by
  \[
    g \cdot f \colon y \mapsto g(y) \; f(p(y)).
  \]
  If $p$ is finite, then $C_0(Y)$ is a Hilbert $C_0(X)$-module under
  \[
    \inprod{f}{g}_{C_0(Y)} \colon t \mapsto \sum_{p(y)=t} \frac{f(y)^* g(y)}{|p^{-1}(t)|}.
  \]
\end{example}
\begin{proof}
  The module axioms are clearly satisfied. The inner product $\inprod{f}{g}$ is well-defined when $p$ has finite fibres; it is continuous because $p$ is a covering, and vanishes at infinity because $f$ and $g$ do so and $p$ is finite. It is clearly sesquilinear and positive definite.
  We need to prove that $C_0(Y)$ is complete in this inner product. Let $\{g_n\}$ be a Cauchy sequence in $C_0(Y)$. Say that the fibres of $p$ have cardinality at most $N$. For $\varepsilon>0$ and large $m,n$:
  \begin{align*}
    | g_n(y) - g_m(y) |^2
    & \leq \sup_{t \in X} \sum_{p(y)=t} \frac{| g_n(y) - g_m(y) |^2}{|p^{-1}(t)|^2}  \\
    & = \| \inprod{g_n-g_m}{g_n-g_m}_{C_0(Y)} \|_{C_0(X)} / N^2
    < \varepsilon 
  \end{align*}
  for all $y \in Y$, so $\{g_n(y)\}$ is a Cauchy sequence in $\C$. 
  Because this convergence is uniform, we obtain a continuous function $g \in C_0(Y)$ satisfying $g(y)=\lim g_n(y)$ pointwise, and hence also $\lim_n g_n = g$ in $C_0(Y)$. 
\end{proof}

\begin{lemma}\label{lem:coveringfrobenius}
  If $p \colon Y \twoheadrightarrow X$ is a finite covering between locally compact Hausdorff spaces, then the Hilbert $C(X)$-module $C(Y)$ of Example~\ref{ex:CYoverCX} is a nondegenerate special dagger Frobenius structure in $\cat{Hilb}_{C(X)}$.
\end{lemma}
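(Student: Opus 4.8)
The plan is to equip $E=C(Y)$ with the multiplication $\mu$ coming from the pointwise product of functions on $Y$ and the unit $\eta$ coming from pullback along $p$, namely $\eta(\varphi)=\varphi\circ p$ (each rescaled by the locally constant, bounded multiplicity $n=|p^{-1}(-)|$, so that the normalisation of the inner product in Example~\ref{ex:CYoverCX} is matched; note $\sqrt{n\circ p}$ is a well-defined positive invertible central scalar because a covering has locally constant fibre cardinality). Both maps are visibly bounded and $C(X)$-linear. To see that they are morphisms of $\cat{Hilb}_{C(X)}$, that is, adjointable, I would first observe that a finite covering is locally trivial, so $C(Y)$ is locally free of rank $n$ and hence finitely presented projective over $C(X)$; then Lemma~\ref{lem:finitelypresentedmapsadjointable} guarantees that every bounded $C(X)$-linear map involving $C(Y)$ is adjointable. (Alternatively one writes the adjoints down by hand.)

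The conceptual heart of the argument is the identification of the tensor square. Since $E$ is finitely presented projective, $E\otimes E$ is the Hilbert module of sections of the fibre product, $E\otimes E\simeq C(Y\times_X Y)$, with a pure tensor $f\otimes g$ corresponding to $(y,y')\mapsto f(y)g(y')$. Under this identification $\mu$ becomes restriction of a function on $Y\times_X Y$ to the diagonal $\Delta_Y$, and the main step is to describe the comultiplication $\mu^\dag\colon C(Y)\to C(Y\times_X Y)$: it sends $h$ to the section equal to a multiple of $h$ on $\Delta_Y$ and vanishing off it. The single point that makes this legitimate is that such an ``extension by zero from the diagonal'' is continuous \emph{precisely because} $\Delta_Y$ is clopen in $Y\times_X Y$, which is exactly Lemma~\ref{lem:diagonalclopen}. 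I expect this to be the main obstacle: everything else is formal once $\mu^\dag$ is known to be a genuine continuous section supported on the diagonal. The adjoint $\eta^\dag\colon C(Y)\to C(X)$ is obtained the same way, as a fibrewise sum $\eta^\dag(h)(t)=\sum_{p(y)=t}h(y)$ up to the normalising factor.

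With $\mu$ and $\mu^\dag$ in hand, associativity and unitality are immediate from the pointwise product (the rescaling is by a central scalar, so it passes through), and the Frobenius law reduces to the observation that both $(\mu\otimes\id)\circ(\id\otimes\mu^\dag)$ and $(\id\otimes\mu)\circ(\mu^\dag\otimes\id)$ send a section on $Y\times_X Y$ to the diagonal-supported section obtained by restricting and re-copying; both agree with $\mu^\dag\circ\mu$, again using only that $\Delta_Y$ is clopen.

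Finally, for speciality and nondegeneracy I would argue fibrewise. Applying the localisation functor $\Loc_t$ of Proposition~\ref{prop:localization} at each $t\in X$ (as in Lemma~\ref{lem:localization}) sends $(E,\mu,\eta)$ to the fibre $\Loc_t\bigl(C(Y)\bigr)\simeq\C^{p^{-1}(t)}$ with the multiplication induced by the pointwise product: this is exactly the standard special commutative dagger Frobenius structure on a finite set, the one that copies an orthonormal basis. Hence $\mu\circ\mu^\dag=\id[E]$ and invertibility of $\eta^\dag\circ\eta$ hold in each fibre, and therefore globally by Theorem~\ref{thm:sectionsvanishingatinfinity}. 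The normalising factor $|p^{-1}(t)|$ in the inner product is precisely what makes the orthonormal basis in each fibre the rescaled indicator functions $\sqrt{n}\,\delta_y$, so that the scalar factors cancel and $\mu\circ\mu^\dag$ is the identity rather than a nonconstant multiple of it.
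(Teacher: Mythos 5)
Your proposal is correct and follows the same architecture as the paper's proof: the same multiplication and unit, the identification $C(Y)\otimes C(Y)\simeq C(Y\times_X Y)$ of the tensor square with the fibre product, and the use of Lemma~\ref{lem:diagonalclopen} to make the diagonal-supported comultiplication continuous --- you have correctly isolated the one genuinely topological step. The differences are tactical but worth recording. On normalisation: the paper keeps $\mu(f\otimes g)=fg$ and $\eta(f)=f\circ p$ unrescaled and places the factor $|p^{-1}(t)|$ in the inner product and in the comultiplication $\delta$; adjointness then forces $\delta(h)$ to take the value $h(y)\,|p^{-1}(p(y))|$ on the diagonal, so $\mu\circ\mu^\dag$ comes out as multiplication by the fibre cardinality rather than the identity, i.e.\ the unrescaled structure is only specialisable. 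Your rescaling by the central scalar $\sqrt{n\circ p}$ (applied reciprocally to $\mu$ and $\eta$, so that unitality survives) is exactly the specialiser of Proposition~\ref{prop:normal} and makes speciality literally true; your closing observation that the scalar factors must cancel is precisely the point the paper's own speciality computation glosses over. On method: you replace two explicit computations by abstract ones --- adjointability via finitely presented projectivity and Lemma~\ref{lem:finitelypresentedmapsadjointable}, and speciality and nondegeneracy via localisation at each $t\in X$ together with faithfulness of $\Gamma_0$. Both are legitimate (evaluation at a point is a strict conditional expectation, so $\Loc_t$ is monoidal and dagger), but the finitely-presented-projective route goes through Theorem~\ref{thm:dualobjects} and hence assumes paracompactness, which the lemma as stated does not; and Lemma~\ref{lem:finitelypresentedmapsadjointable} only gives \emph{existence} of $\mu^\dag$, so you still need the paper's inner-product computation to confirm that your candidate diagonal-supported formula really is the adjoint before the Frobenius law can be checked with it. Your ``write the adjoints down by hand'' fallback covers both caveats.
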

\begin{proof}
  As multiplication $\mu \colon C_0(Y) \otimes C_0(Y) \to C_0(Y)$, take $f \otimes g \mapsto fg$. As unit $\eta \colon C_0(X) \to C_0(Y)$, take $f \mapsto f \circ p$. This clearly defines a monoid.
  Define the counit $\gamma \colon C_0(Y) \to C_0(X)$ by $\gamma(g)(t) = \sum_{p(y)=t} g(y)/|p^{-1}(t)|$. Then indeed $\eta^\dag = \gamma$:
  \[  
    \inprod{\eta(f)}{g}_{C_0(Y)}(t)
    = \sum_{p(y)=t} \frac{\overline{f(p(y))} g(y)}{|p^{-1}(t)|}
    = \overline{f(t)} \sum_{p(y)=t} \frac{g(y)}{|p^{-1}(t)|}
    = \inprod{f}{\gamma(g)}_{C_0(X)}(t).
  \]
  Then $\eta^\dag \circ \eta=\id$.
  The pushout $C_0(Y) \otimes_{C_0(X)} C_0(Y)$ of $- \circ p \colon C_0(X) \to C_0(Y)$ corresponds by Gelfand duality to $C_0(Y \times_X Y)$, where $Y \times_X Y$ is the pullback of $p$ along itself:
  \begin{align*}
    C_0(Y) \otimes_{C_0(X)} C_0(Y) & \to C_0(Y \times_X Y) \\
    f \otimes g & \mapsto \Big( (y_1,y_2) \mapsto f(y_1)g(y_2)\Big)\text.
  \end{align*}
  Define the comultiplication $\delta \colon C_0(Y) \to C_0(Y) \otimes_{C_0(X)} C_0(Y)$ by
  \[
    \delta(h)(y_1,y_2) = \left\{\begin{array}{ll}
      h(y) |p^{-1}(p(y))| & \text{ if }y_1=y_2, \\
      0 & \text{ otherwise;}
    \end{array}\right.
  \]
  this is continuous because the diagonal $\Delta_Y \subseteq Y \times_X Y$ is clopen by Lemma~\ref{lem:diagonalclopen}.
  There are $h^{(1)}_i,h^{(2)}_i \in C_0(Y)$ with $\delta = \sum_i h^{(1)}_i \otimes h^{(2)}_i$.
  Now we can verify that $\mu^\dag = \delta$; labeling $p^{-1}(t) = \{y_1,\ldots,y_n\}$:
  \begin{align*}
    \inprod{f \otimes g}{\delta(h)}_{C_0(Y) \otimes C_0(Y)}(t) 
    & = \sum_i \inprod{f}{h^{(1)}_i}_{C_0(Y)}(t) \cdot \inprod{g}{h^{(2)}_i}_{C_0(Y)}(t) \\
    & = \frac{1}{n^2} \sum_{j,l} \overline{f(y_j)g(y_l)} \sum_i h^{(1)}_i(y_j)h^{(2)}_i(y_l) \\
    & = \frac{1}{n} \sum_j \overline{f(y_j)g(y_j)}h(y_j) \\
    & = \frac{1}{n} \sum_{p(y)=t} \overline{f(y)g(y)}h(y) \\
    & = \inprod{\mu(f \otimes g)}{h}_{C_0(Y)}(t).
  \end{align*}

  Speciality $\mu \circ \mu^\dag = \id[C_0(Y)]$ is established by the following computation:
  \[
    \mu \circ \mu^\dag(h)(y)
    = \big( \sum_i h^{(1)}_i h^{(2)}_i \big)(y)
    = \sum_i h^{(1)}_i(y) h^{(2)}_i(y) 
    = h(y).
  \]
  Next, we verify the Frobenius law:
  \begin{align*}
    (\mu \otimes 1) \circ (1 \otimes \mu^\dag)(f \otimes g)
    & = (\mu \otimes 1)(f \otimes \sum_i g_i^{(1)} \otimes g_i^{(2)}) 
    = \sum_i fg_i^{(1)} \otimes g_i^{(2)}, \\
    (1 \otimes \mu) \circ (\mu^\dag \otimes 1)(f \otimes g)
    & = (1 \otimes \mu) (\sum_i f_i^{(1)} \otimes f_i^{(2)} \otimes g)
    = \sum_i f_i^{(1)} \otimes f_i^{(2)}g.
  \end{align*}
  Under the identification $C_0(Y) \otimes_{C_0(X)} C_0(Y) \simeq C_0(Y \times_X Y)$, the previous two elements of $C_0(Y) \otimes_{C_0(X)} C_0(Y)$ map $(y_1,y_2) \in Y \times_X Y$ to, respectively:
  \begin{align*}
    \sum_i (fg_i^{(1)})(y_1) g_i^{(2)}(y_2)
    & = \sum_i f(y_1)g_i^{(1)}(y_1) g_i^{(2)}(y_2) = \delta_{y_1,y_2} f(y_1)g(y_1), \\
    \sum_i (f_i^{(1)})(y_1) (f_i^{(2)}g)(y_2)
    & = \sum_i f_i^{(1)}(y_1) f_i^{(2)}(y_2)g(y_2) = \delta_{y_1,y_2} f(y_2)g(y_2).
  \end{align*}
  These are clearly equal to each other.
\end{proof}

We will see in Section~\ref{sec:commutativity} below that in fact every commutative special dagger Frobenius structure in $\cat{Hilb}_{C_0(X)}$ is of the form of the previous lemma.
Let us discuss two special cases to build intuition.

\begin{example}
  To connect to the familiar example in $\cat{FHilb}$, take $X=1$ and consider a two-point space $Y$ trivially covering $X$. Then the pullback $Y \times_X Y$ is simply the product $Y \times Y$.
  The Frobenius structure of the previous lemma then is $C_0(Y) = \C^2$.
  It carries the normalised version of its usual inner product.
  The normalisation factor is needed to make the Frobenius structure special.
  It is taken into the inner product, because otherwise the computations in the previous lemma involving the multiplication and comultiplication would become unreadable; the normalisation has to happen somewhere, and the inner product seems like the least objectionable place.
  Thus $C_0(Y)=\C^2$ is a Hilbert module over $C_0(X) = \C$.
\end{example}

\begin{example}
  Applying Lemma~\ref{lem:coveringfrobenius} to the double cover of Example~\ref{ex:doublecouver} with $n=2$, the pullback $Y \times_X Y$ is a subset $\{(a,b) \in S^1 \times S^1 \mid a^2=b^2\}$ of the torus.
  \[\begin{tikzpicture}[scale=1.5,font=\tiny]
  \draw[thick] (-3.5,0) .. controls (-3.5,2) and (-1.5,2.5) .. (0,2.5);
  \draw[thick,xscale=-1] (-3.5,0) .. controls (-3.5,2) and (-1.5,2.5) .. (0,2.5);
  \draw[thick,rotate=180] (-3.5,0) .. controls (-3.5,2) and (-1.5,2.5) .. (0,2.5);
  \draw[thick,yscale=-1] (-3.5,0) .. controls (-3.5,2) and (-1.5,2.5) .. (0,2.5);

  \draw[thick] (-1.8,0) .. controls (-1.5,-0.3) and (-1,-0.5) .. (0,-.5) .. controls (1,-0.5) and (1.5,-0.3) .. (1.8,0);

  \draw[thick] (-1.8,0) .. controls (-1.5,0.3) and (-1,0.5) .. (0,.5) .. controls (1,0.5) and (1.5,0.3) .. (1.8,0);

  \draw[dashed,gray] (-2.65,0) circle (.85); 
  \draw[dashed,gray] (2.65,0) circle (.85); 
  \draw[dashed,gray] (0,-1.5) ellipse (.2 and 1); 
  \draw[dashed,gray] (0,1.5) ellipse (.2 and 1); 

  \draw[dashed,gray] (0,0) ellipse (3.5 and 2); 
  \draw[dashed,gray] (0,0) ellipse (1.8 and 1); 

  \draw[fill=black] (0,-2.5) circle(.02) node[below]{$(-i,-i)$}; 
  \draw[fill=black] (0,-.5) circle(.02) node[above]{$(-i,i)$}; 
  \draw[fill=black] (-3.5,0) circle(.02) node[left]{$(-1,-1)$};
  \draw[fill=black] (-1.8,0) circle(.02) node[right]{$(-1,1)$};
  \draw[fill=black] (0,2.5) circle(.02) node[above]{$(i,i)$}; 
  \draw[fill=black] (0,.5) circle(.02) node[below]{$(i,-i)$}; 
  \draw[fill=black] (3.5,0) circle(.02) node[right]{$(1,-1)$};
  \draw[fill=black] (1.8,0) circle(.02) node[left]{$(1,1)$};

  \draw[thick,blue] (-3.5,0) to[out=-90,in=180,looseness=.7] (0,-2.5);
  \draw[thick,blue] (1.8,0) to[out=90,in=0] (0,2.5);
  \draw[thick,blue,dashed] (0,2.5) to[out=180,in=90,looseness=.7] (-3.5,0); 
  \draw[thick,blue,dashed] (0,-2.5) to[out=0,in=-90] (1.8,0);

  \draw[thick,red] (0,.5) to[out=180,in=90] (-1.8,0) to[out=-90,in=0] (0,-.5);
  \draw[thick,dashed,red] (0,-.5) to[out=0,in=-90] (3.5,0) to[out=90,in=0] (0,.5); 
  \end{tikzpicture}\]
  It clearly consists of two homeomorphic connected components, one of which is the diagonal $\{(a,a) \in S^1 \times S^1 \mid a \in S^1\}$, as in Lemma~\ref{lem:diagonalclopen}, and the other one is $\{(a,-a) \in S^1 \times S^1 \mid a \in S^1\}$. This enables the definition of the comultiplication $\mu^\dag$ as a map of $C(X)$-modules. 
  However, as the double cover $p$ is not trivial, it has no global sections $e_i$. Therefore there cannot be a description of the comultiplication $\mu^\dag$ in terms of $e_i \mapsto e_i \otimes e_i$ as in the case $X=1$; this is only the case over local neighbourhoods of points $t \in X$.
\end{example}

\begin{remark}
  The previous example shows that not every special dagger Frobenius structure in $\cat{Hilb}_{C_0(X)}$ is of the form $\bigoplus \mathrm{End}(E_i)$ for projective Hilbert $C_0(X)$-modules $E_i$. If that were the case, since the rank of the previous example can uniquely be written as a sum of squares as $2=1+1$, then it would have to be a direct sum of two Hilbert $C_0(X)$-modules of rank 1. But then it would have nontrivial idempotent central global sections, which it does not.
\end{remark}


We end this section with nontrivial examples noncommutative special dagger Frobenius structures in $\cat{Hilb}_C$.
In fact, we will consider examples that are noncommutative in an extreme sense, namely that of being central, defined as follows.

\begin{definition}\label{def:central}
  A dagger Frobenius structure $(E,\mu)$ in $\cat{Hilb}_C$ is \emph{central} when $Z(E)=\{x \in E \mid \forall y \in E \colon \mu(x \otimes y)=\mu(y \otimes x)\}=1_E \cdot C$ and it is faithful as a right $C$-module: $f \in C$ vanishes when $1_E f = 0$ (or equivalently, when $xf=0$ for all $x \in E$).
\end{definition}

\begin{example}
  Write $\mathbb{D} = \{ z \in \C \mid |z| \leq 1\}$ for the unit disc, 
  $S^1 = \{ z \in \mathbb{C} \mid |z| = 1\}$ for the unit circle,
  and $X=S^2 = \{ t \in \mathbb{R}^3 \mid \|t\|=1 \}$ for the 2-sphere. 
  Let $n\geq 2$ be a natural number, and consider
  \[
    E = \{ x \in C(\mathbb{D},\M_n) \mid x(z)=\diag(\overline{z},1,\ldots,1) \,x(1)\diag(z,1,\ldots,1) \text{ if }|z|=1 \}.
  \]
  Then $E$ is a $C(X)$-module via the homeomorphism $X \simeq \mathbb{D}/S^1$;
  more precisely, if $q \colon \mathbb{D} \to X$ is the quotient map, then multiplication $E \times C(X) \to E$ is given by $(x \cdot f)(z) = x(t) \cdot f(q(t))$.
  Moreover, $E$ is a Hilbert $C(X)$-module under $\inprod{x}{y}(t) = \tr(x(t)^* y(t))$.
  Finally, pointwise multiplication makes $E$ a nontrivial central special dagger Frobenius structure in $\cat{Hilb}_{C(X)}$.
\end{example}
\begin{proof}
  See~\cite[Theorem~5.8]{antonevichkrupnik:trivial} for the fact that $E$ is the Hilbert module of sections of a nontrivial finite C*-bundle. Use Theorem~\ref{thm:takahashi:algebras} below to see that it is a nontrivial special dagger Frobenius structure.

  To see that $E$ is central, notice that
  \begin{align*}
    Z(E)
    & = \{ y \in E \mid \forall x \in E \; \forall z \in \mathbb{D} \colon x(z) y(z) = y(z) x(z) \} \\
    & = E \cap C(\mathbb{D},Z(\M_n))  \\
    & = E \cap C(\mathbb{D}) = C(X) \cdot 1_E
  \end{align*}
  because if $y \in Z(E)$ does not take values in $Z(\M_n)$ at some $z \in \mathbb{D}$, there are two cases: if $|z|<1$ or $z=1$, then $x$ does not commute with some $y \in E$ at $z$; and if $|z|=1$, then it also does not take values in $Z(\M_n)$ at $z=1$.
\end{proof}

\section{C*-bundles}\label{sec:cstarbundles}

We know from Section~\ref{sec:hilbertbundles} that Hilbert modules are equivalent to Hilbert bundles. We are interested in Frobenus structures in the category of Hilbert modules, as defined in Section~\ref{sec:frobenius}. In this section, we apply the bundle perspective to dagger Frobenius structures. They form C*-algebras themselves, as the following lemma shows.

\begin{lemma}\label{lem:frobeniuscstar}
  Special dagger Frobenius structures in $\cat{Hilb}_{C_0(X)}$ are C*-algebras.
\end{lemma}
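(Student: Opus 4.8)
The plan is to equip the underlying Hilbert module $E$ with the multiplication $x\cdot y=\mu(x\otimes y)$ and the involution $x\mapsto x^{*}$ obtained from the map $i\colon E\to E^{*}$ of Remark~\ref{rem:frobinhilbiscstar}, post-composed with the canonical self-duality isomorphism $E^{*}\simeq E$ that every dagger Frobenius structure carries. Since $\mu$ and $i$ are morphisms in $\cat{Hilb}_{C_0(X)}$, the products $x\cdot y$ and the involutes $x^{*}$ again lie in $E$. Associativity is the Frobenius associativity axiom, the unit laws hold for $\eta$, and the remaining $*$-algebra identities $x^{**}=x$, $(xy)^{*}=y^{*}x^{*}$, and conjugate-linearity of ${*}$ I would verify either graphically as in~\cite{vicary:quantumalgebras} or, more cheaply, fibrewise: by Lemma~\ref{lem:localization} every localisation $E_t=\Loc_t(E)$ is a finite-dimensional C*-algebra, the involution restricts on each fibre to the C*-involution of $E_t$ (localisation being a dagger monoidal functor preserving Frobenius structures), and elements of $E$ are detected fibrewise by Theorem~\ref{thm:sectionsvanishingatinfinity}, so an identity holding in every $E_t$ holds in $E$.

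The real content is the norm, which is genuinely different from the Hilbert-module norm $\|\cdot\|_E$ (fibrewise the former is the operator norm and the latter the $L^{2}$-norm of the trace inner product). Since $X$ is paracompact, Theorem~\ref{thm:dualobjects} gives $E\simeq\Gamma_0(p)$ for a finite Hilbert bundle $p\colon E\twoheadrightarrow X$ whose fibres $E_t$ are finite-dimensional C*-algebras of dimension bounded by some $N$, and for which $\inprod{x}{y}_E(t)=\inprod{x(t)}{y(t)}_{E_t}$, so that $\|x\|_E=\sup_{t}\|x(t)\|_{E_t}$. I define the candidate C*-norm by
\[
  \|x\|=\sup_{t\in X}\|x(t)\|_{\mathrm{op}},
\]
the supremum of the genuine C*-norms of the fibres. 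Submultiplicativity and the C*-identity then hold because they hold in each fibre and survive the supremum, using $\sup_t\|x(t)^{*}x(t)\|_{\mathrm{op}}=\sup_t\|x(t)\|_{\mathrm{op}}^{2}=\big(\sup_t\|x(t)\|_{\mathrm{op}}\big)^{2}$; and $\|x\|=0$ forces $x(t)=0$ for all $t$, hence $x=0$. Note that continuity of $t\mapsto\|x(t)\|_{\mathrm{op}}$ is not required, only finiteness of the supremum.

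Finiteness of that supremum and completeness both follow from a single observation: because $\dim E_t\le N$ there are only finitely many isomorphism classes of fibre C*-algebra, each carrying its canonical special inner product (Lemma~\ref{lem:localization}), so there is a uniform constant $c_N$ with $c_N^{-1}\|a\|_{E_t}\le\|a\|_{\mathrm{op}}\le c_N\|a\|_{E_t}$ on every fibre. Hence $\|\cdot\|$ and $\|\cdot\|_E$ are equivalent norms on $E$, giving $\|x\|\le c_N\|x\|_E<\infty$, and, since $E$ is complete in $\|\cdot\|_E$, it is complete in $\|\cdot\|$. Thus $(E,\cdot,{*},\|\cdot\|)$ is a C*-algebra. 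I expect the main obstacle to be exactly this uniform norm-equivalence: one must argue that the special normalisation of the Frobenius inner product on each fibre does not degrade as $t$ varies, which is precisely what the bound $\dim E_t\le N$, together with there being finitely many special structures of bounded dimension, secures. The purely algebraic verification that ${*}$ is an anti-homomorphism is routine by comparison.
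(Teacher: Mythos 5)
Your proof is correct, but on the one step that carries real content it takes a genuinely different route from the paper's. The paper works throughout with the Hilbert-module norm $\|x\|_E=\|\inprod{x}{x}\|_{C_0(X)}^{1/2}$: it gets submultiplicativity from speciality (since $\mu^\dag\circ\mu$ is a projection, $\|xy\|^2=\|\inprod{\mu^\dag\mu(x\otimes y)}{x\otimes y}\|\leq\|x\|^2\|y\|^2$ --- a purely categorical step you could have reused in place of your fibrewise argument), and then asserts the C*-identity for $\|\cdot\|_E$ itself by checking it fibrewise. You instead introduce a second norm, the supremum of the fibrewise operator norms, verify the C*-identity there, and transfer completeness via a uniform equivalence with $\|\cdot\|_E$ obtained from the bounded fibre dimension together with the classification of special dagger Frobenius structures in $\cat{FHilb}$. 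Your extra care is warranted: fibrewise the module norm is the $L^2$-norm of the canonical trace, $\inprod{a}{b}=\sum_k n_k\tr(a_k^*b_k)$ on $\bigoplus_k\M_{n_k}$, which does not satisfy the C*-identity once some $n_k>1$ (for $x=1\in\M_n$ one has $\|x^*x\|=n$ while $\|x\|^2=n^2$), so the defensible conclusion is exactly the one you reach --- $E$ admits an equivalent norm making it a C*-algebra --- rather than that $\|\cdot\|_E$ is itself a C*-norm. The only place you should be more explicit is the uniform constant $c_N$: its existence rests on the fact that specialisable dagger Frobenius structures are symmetric, so the fibre functional is tracial and speciality pins it down to the canonical trace; that is what legitimises the ``finitely many isomorphism classes, each with its canonical special inner product'' step and hence the bound $\|a\|_{E_t}\leq\sqrt{N}\,\|a\|_{\mathrm{op}}$.
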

\begin{proof}
  First of all, $E$ is clearly a Banach space, as an object in $\cat{Hilb}_{C_0(X)}$. 
  It is also an algebra with multiplication $\mu \colon E \otimes E \to E$.
  In fact, it becomes a Banach algebra because $\mu^\dag \mu$ is a projection by speciality~\cite[Lemma~9]{abramskyheunen:operational}:
  \begin{align*}
    \|xy\|^2
    & = \| \inprod{\mu(x \otimes y)}{\mu(x \otimes y)}_{E\otimes E} \|_{C_0(X)} \\
    & = \| \inprod{\mu^\dag \mu(x \otimes y)}{x \otimes y}_{E\otimes E} \|_{C_0(X)} \\
    & \leq \| \inprod{x \otimes y}{x \otimes y}_{E\otimes E} \|_{C_0(X)} \\
    & = \| \inprod{x}{x}_E \inprod{y}{y}_E \|_{C_0(X)} \\
    & \leq \| \inprod{x}{x}_E \|_{C_0(X)} \| \inprod{y}{y}_E \|_{C_0(X)} \\
    & = \|x\|^2 \|y\|^2.
  \end{align*}
  Finally, this satisfies the C*-identity because it does so locally at each $t \in X$ by Lemma~\ref{lem:localization}:
  \[
    \| x^* x \|_E
    = \sup_{t \in X} \| x^* x \|_{\Loc_t(E)}
    = \sup_{t \in X} \| x \|_{\Loc_t(E)}^2
    = \| x \|_E^2.
  \]
  The outer equalities use Theorem~\ref{thm:takahashi}.
\end{proof}

The C*-algebras induced by dagger Frobenius structures have more internal structure: they are in fact a bundle of C*-algebras, as made precise in the following definition.

\begin{definition}\label{def:cstarbundle}
  A \emph{finite (commutative) C*-bundle} is a bundle $p \colon E \twoheadrightarrow X$ where:
  \begin{enumerate}[label=(\arabic*)]
  \item all fibres $E_t$ for $t \in X$ are finite-dimensional (commutative) C*-algebras;
  \item any $t_0 \in X$ has an open neighbourhood $U \subseteq X$, a finite-dimensional C*-algebra $A$, and a homeomorphism $\varphi \colon U \times A \to E_U$, such that the map $\varphi(t,-) \colon A \to E_t$ is a $*$-isomorphism for each $t \in U$;
  \item the dimension of the fibres is bounded.
  \end{enumerate}
\end{definition}

If $X$ is compact, then condition (3) is superfluous.

The next lemma shows that we may view being a finite C*-bundle as structure laid on top of being a finite Hilbert bundle.

\begin{lemma}\label{lem:cstarbundleishilbertbundle}
  Any finite C*-bundle is a finite Hilbert bundle.
\end{lemma}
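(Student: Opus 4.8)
The plan is to make each fibre into a Hilbert space using its canonical Hilbert--Schmidt inner product, and then to transport a fixed orthonormal basis through the local trivializations supplied by Definition~\ref{def:cstarbundle}(2). The whole argument hinges on one algebraic fact, which I would isolate as the key step: the Hilbert--Schmidt inner product on a finite-dimensional C*-algebra is \emph{intrinsic}, meaning it is preserved by every $*$-isomorphism.

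To establish the key step, recall that every finite-dimensional C*-algebra $B$ decomposes canonically as $\bigoplus_k \M_{n_k}(\C)$, with the simple summands corresponding to its minimal central projections, and that this carries a canonical trace $\tau_B = \sum_k \tr_{n_k}$ summing the standard (unnormalised) matrix traces. Given a $*$-isomorphism $\theta \colon B \to B'$, it sends minimal central projections to minimal central projections, hence restricts to $*$-isomorphisms between the simple summands, which must therefore have matching dimensions; and a $*$-isomorphism $\M_n \to \M_n$ is conjugation by a unitary (Skolem--Noether), so it preserves $\tr_n$. Thus $\tau_{B'} \circ \theta = \tau_B$, and since $\theta(a^*)=(\theta a)^*$ we get $\inprod{\theta a}{\theta b}_{B'} = \tau_{B'}(\theta(a^*b)) = \tau_B(a^*b) = \inprod{a}{b}_B$ for the inner products $\inprod{a}{b} = \tau(a^*b)$. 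This makes $E_t \mapsto (E_t, \inprod{-}{-})$ a well-defined assignment, independent of any trivialization.

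With this in hand I would check the two clauses of Definition~\ref{def:hilbertbundle}. Clause (a) is immediate: each $E_t$ is a finite-dimensional C*-algebra, hence a finite-dimensional Hilbert space under its Hilbert--Schmidt inner product, and the subspace topology inherited from $E$ agrees with the norm topology since all Hausdorff vector topologies on a finite-dimensional space coincide. For clause (b), fix $t_0 \in X$ and take the homeomorphism $\varphi \colon U \times A \to E_U$ with each $\varphi(t,-) \colon A \to E_t$ a $*$-isomorphism. Choosing an orthonormal basis $a_1,\ldots,a_n$ of $A$ for its Hilbert--Schmidt inner product, with $n = \dim_{\C} A$, and setting $s_i(t) = \varphi(t,a_i)$, the key step makes each $\varphi(t,-)$ an isometry, so $\{s_1(t),\ldots,s_n(t)\}$ is an orthonormal basis of $E_t$, giving (b.i); moreover $(t,\lambda) \mapsto \sum_i \lambda_i s_i(t) = \varphi(t, \sum_i \lambda_i a_i)$ is the composite of the linear isomorphism $\C^n \simeq A$ with the homeomorphism $\varphi$, hence a homeomorphism $U \times \C^n \simeq E_U$, giving (b.ii). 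Finally, the Hilbert-space dimension of $E_t$ equals its linear dimension as a C*-algebra, so condition (3) of Definition~\ref{def:cstarbundle} yields exactly the boundedness required for the Hilbert bundle to be finite.

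The main obstacle is the intrinsicality asserted in the key step; the rest is bookkeeping through $\varphi$. The delicate point is that a trace weighted differently across the simple summands would fail to be preserved, so one must use the canonical unnormalised trace and genuinely invoke that the automorphisms of each matrix factor are inner.
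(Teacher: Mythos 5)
Your proposal is correct and follows essentially the same route as the paper: equip each fibre with the Hilbert--Schmidt inner product $\sum_k \tr(a_k^* b_k)$ coming from its canonical decomposition into matrix algebras, and transport an orthonormal basis of $A$ (the paper uses the standard matrix units) through the trivialization $\varphi(t,-)$. Your explicit justification that $*$-isomorphisms preserve this inner product, and your verification of clause (b.ii) and of finiteness, are details the paper leaves implicit, but the argument is the same.
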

\begin{proof}
  Let $p \colon E \to X$ be a finite C*-bundle. 
  The fibre over $t_0 \in X$ is a finite-dimensional C*-algebra, and hence canonically of the form $\M_{n_1}\oplus\cdots\oplus\M_{n_k}$ up to isomorphism.
   It is a finite-dimensional Hilbert space under the inner product
  \[
    \inprod{(a_1,\ldots,a_k)}{(b_1,\ldots,b_k)} = \tr(a_1^*b_1) + \cdots + \tr(a_k^*b_k).
  \]
  Condition (2) also gives an open neighbourhood $U$ of $t_0$, a finite-dimensional C*-algebra $A=\M_{n_1}\oplus\cdots\oplus\M_{n_k}$, and a homeomorphism $\varphi \colon U \times A \to E_U$. Take $n=\dim(A)$, and let the standard matrix units constitute an orthonormal basis $e_1,\ldots,e_n$ of $A$. Define  continuous sections $s_i \colon U \to E$ by $s_i(t) = \varphi(t,e_i)$. Now $\{s_i(t)\}$ forms an orthonormal basis of $E_t$ for all $t \in U$ by (2). 
\end{proof}

\begin{example}\label{ex:pantsbundle}
  If $X$ is a paracompact locally compact Hausdorff space, and $E$ a finitely presented projective Hilbert $C_0(X)$-module, then $\L(E) = E^* \otimes E \simeq \cat{Hilb}_{C_0(X)}(E,E)$ is a finite C*-bundle.
\end{example}
\begin{proof}
  Notice that $\cat{Hilb}_{C_0(X)}$ is a C*-category~\cite[Example~1.4]{ghezlimaroberts:wstarcategories}, and a monoidal category by Proposition~\ref{prop:monoidal}. Thus it is a tensor C*-category, and hence a 2-C*-category (with a single object).
  The result follows from~\cite[Proposition~2.7]{zito:cstarcategories}.
\end{proof}

As in Section~\ref{sec:hilbertbundles}, let us spend some time on connecting to terminology in the literature. The reader only interested in new developments may safely skip may safely skip the next lemma.

Just as Definition~\ref{def:hilbertbundle} was a simplification of Definition~\ref{def:fieldofhilbertspaces}, the previous definition is a simplification of the notion of \emph{field of C*-algebras} in the literature~\cite{fell:bundles,felldoran:bundles,dupre:classifyinghilbertbundles,tomiyama:representation,tomiyamatakesaki:bundles,blackadar:operatoralgebra}: a field $p \colon E \twoheadrightarrow X$ of Banach spaces where each fibre is a C*-algebra, where multiplication gives a continuous function $\{(x,y) \in E^2 \mid p(x)=p(y)\} \to E$, and where involution gives a continuous function $E \to E$. A field of C*-algebras is \emph{uniformly finite-dimensional} when each fibre is finite-dimensional, and the supremum of the dimensions of the fibres is finite.

\begin{lemma}
  A finite C*-bundle is the same thing as a uniformly finite-dimensional field of C*-algebras.
\end{lemma}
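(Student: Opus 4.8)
The plan is to prove the two implications separately, mirroring the structure of Lemma~\ref{lem:fieldofhilbertspaces} and building on Lemma~\ref{lem:cstarbundleishilbertbundle}.

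For the easy direction, let $p \colon E \to X$ be a finite C*-bundle. By Lemma~\ref{lem:cstarbundleishilbertbundle} it is a finite Hilbert bundle, hence by Lemma~\ref{lem:fieldofhilbertspaces} a finite field of Hilbert spaces, and in particular a field of Banach spaces whose fibres are uniformly bounded in dimension. It remains only to check that multiplication $\{(x,y)\in E^2 \mid p(x)=p(y)\} \to E$ and involution $E \to E$ are continuous. This is a local question, so I would work over a trivialising neighbourhood $U$, where the homeomorphism $\varphi \colon U \times A \to E_U$ of Definition~\ref{def:cstarbundle}(2) conjugates the fibrewise operations into the continuous, fibrewise-constant operations $(t,a,b)\mapsto(t,ab)$ and $(t,a)\mapsto(t,a^*)$ on $U \times A$. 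This exhibits $E$ as a uniformly finite-dimensional field of C*-algebras.

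For the converse, let $p \colon E \to X$ be a uniformly finite-dimensional field of C*-algebras. Conditions (1) and (3) of Definition~\ref{def:cstarbundle} are immediate, so everything reduces to producing the local trivialisation (2). Fix $t_0 \in X$, write $A = E_{t_0} \simeq \M_{n_1}\oplus\cdots\oplus\M_{n_k}$, and choose a complete system of matrix units $\{e_\alpha\}$ for $A$; these form a linear basis obeying the $*$-algebra relations $e_\alpha e_\beta = \sum_\gamma c^\gamma_{\alpha\beta} e_\gamma$ and $e_\alpha^* = e_{\alpha'}$. Using axiom (5) of Definition~\ref{def:fieldofhilbertspaces}, extend each $e_\alpha$ to a continuous local section $s_\alpha$ with $s_\alpha(t_0)=e_\alpha$. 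Since the matrix-unit relations are polynomial identities in the continuous operations of multiplication and involution, they hold exactly at $t_0$ and hence approximately on a smaller neighbourhood $U$. I would then correct $\{s_\alpha(t)\}$ to an exact system of matrix units $\{\tilde s_\alpha(t)\}$ in each fibre $E_t$ by the standard finite-dimensional stability procedure: push the near-projections to genuine self-adjoint projections by functional calculus, orthogonalise them, and replace the near-partial-isometries by the partial isometries of their polar decompositions. Each correction is given by a continuous fibrewise formula, so each $t \mapsto \tilde s_\alpha(t)$ remains a continuous section, and the $\tilde s_\alpha(t)$ span a sub-$*$-algebra $B(t) \simeq A$ of $E_t$. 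Setting $\varphi(t,\sum_\alpha \lambda_\alpha e_\alpha)=\sum_\alpha \lambda_\alpha \tilde s_\alpha(t)$ then yields the desired homeomorphism $U \times A \to E_U$ that is a fibrewise $*$-isomorphism, \emph{provided} $B(t)=E_t$.

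The main obstacle is exactly this last proviso: showing that the matrix-unit subalgebra exhausts the fibre, equivalently that the fibre dimension is locally constant near $t_0$. Lower semicontinuity of $t \mapsto \dim E_t$ is automatic (linear independence of the sections $\tilde s_\alpha$ is an open condition, so $\dim E_t \ge \dim A$ near $t_0$), but ruling out an upward jump is genuinely the content of local triviality and does not follow from the Banach-field axioms alone; it is precisely the C*-analogue of the Gram--Schmidt step in Lemma~\ref{lem:fieldofhilbertspaces}, and must be extracted from the multiplicative structure. Two routes are available: carry the perturbation argument through carefully enough that the projection $\sum_{r,i}\tilde s^{(r)}_{ii}(t)$ is seen to be the unit of $E_t$ on a neighbourhood (forcing $\dim E_t = \dim A$, whence $B(t)=E_t$ and, via Lemma~\ref{lem:fieldofhilbertspaces} and Remark~\ref{rem:dimensioncontinuous}, local constancy of dimension); or invoke the structure theory of uniformly finite-dimensional continuous fields of C*-algebras in the cited literature, where such fields are shown to be locally trivial. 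Either way, the continuity-across-fibres of the functional-calculus and polar-decomposition corrections, in a setting where the ambient finite-dimensional C*-algebra itself varies with $t$, is where the technical care is concentrated.
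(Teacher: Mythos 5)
Your route is the same as the paper's: the forward direction goes through Lemma~\ref{lem:cstarbundleishilbertbundle} and Lemma~\ref{lem:fieldofhilbertspaces} together with continuity of the fibrewise operations, and the converse starts from the matrix units of $E_{t_0}$, extends them to local sections via axiom (5) of Definition~\ref{def:fieldofhilbertspaces}, and restricts to the open set where they are linearly independent. Where you differ is in what you admit still needs doing: the paper defines $\varphi$ by linear extension and asserts that $\varphi(t,-)$ is a $*$-isomorphism ``by construction'', whereas you correctly point out that this hides two separate claims --- that the sections can be corrected so as to satisfy the matrix-unit relations exactly on a neighbourhood, and that the resulting subalgebra exhausts the fibre. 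Your perturbation argument for the first claim is the standard one and is fine (and more honest than the paper's one-line dismissal).

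The second claim is a genuine gap, and neither of the two routes you offer closes it. A fibrewise unit section does not force exhaustion: over $X=[0,1]$, the field whose sections are $\{(f,g) \in C([0,1])^2 \mid f(0)=g(0)\}$, with fibre $\C$ at $0$ and $\C^2$ elsewhere, satisfies every clause of the definition of a uniformly finite-dimensional field of C*-algebras as stated (the C*-norm of a section is continuous because $\max(|f(t)|,|g(t)|) \to |f(0)|$), the unit section $(1,1)$ is the unit of every fibre, and yet the subalgebra generated by the matrix units of $E_0$ is the one-dimensional diagonal in every nearby fibre. The same example shows that uniformly finite-dimensional fields need not be locally trivial, so your second route --- citing local triviality from the literature --- is not available either; that result holds for homogeneous (constant fibre) fields, not subhomogeneous ones. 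The dimension of the fibres is only lower semicontinuous, and some hypothesis such as condition (6) of Definition~\ref{def:fieldofhilbertspaces} is needed to exclude an upward jump. You have diagnosed precisely the point at which the paper's own proof is silent, but as written neither your argument nor the paper's establishes surjectivity of $\varphi(t,-)$ for $t \neq t_0$.
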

\begin{proof}
  By Lemma~\ref{lem:cstarbundleishilbertbundle}, any finite C*-bundle is a finite Hilbert bundle, and hence a finite field of Banach spaces of locally finite rank by Lemma~\ref{lem:fieldofhilbertspaces}. Similarly, multiplication and involution are continuous functions by the same argument as in the proof of Lemma~\ref{lem:fieldofhilbertspaces}. 

  The converse is similar to Lemma~\ref{lem:fieldofhilbertspaces} for the most part.
  Let $p \colon E \twoheadrightarrow X$ be a uniformly finite-dimensional field of C*-algebras.
  Let $t_0 \in X$. Take $A=E_{t_0}$, say of the form $\M_{n_1}\oplus\cdots\oplus\M_{n_k}$, and let $x_1,\ldots,x_n$ be the orthonormal basis of $A$ constituted by standard matrix units.
  Condition (5) gives sections $s_i \colon U \to X$ with $s_i(t_0)=x_i$.
  Take $U=U_1 \cap \cdots \cap U_n \cap \{ t \in X \mid \{s_i(t)\} \text{ linearly independent}\}$; this is an open subset of $X$.
  Define $\varphi \colon U \times A \to E_U$ by linearly extending $(t,s_i)\mapsto s_i(t)$. This is a homeomorphism, and $\varphi(t,-)$ is a $*$-isomorphism by construction.
\end{proof}

Next, we turn to the appropriate notion of morphism between finite C*-bundles.

\begin{definition}
  A \emph{morphism} of finite C*-bundles is a bundle map that is fibrewise a $*$-homomorphism. 
  Write $\cat{FCstarBundle}_X$ for the category of finite C*-bundles with their morphisms.
\end{definition}

We are now ready for the main result of this sectoin: to characterise the (commutative) dagger specialisable Frobenius structures in $\cat{Hilb}_{C_0(X)}$ as finite (commutative) C*-bundles over $X$.

\begin{theorem}\label{thm:takahashi:algebras}
  There is an equivalence of monoidal dagger categories
  \[\begin{tikzpicture}
    \node (M) at (4,0) {$\cat{Frob}_{C_0(X)}$};
    \node (F) at (0,0) {$\cat{FCstarBundle}_X$};
    \draw[->] (F.east) to node[above] {$\Gamma_0$} (M.west);
  \end{tikzpicture}\]
  for any paracompact locally compact Hausdorff space $X$.
\end{theorem}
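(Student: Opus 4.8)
The plan is to promote the monoidal dagger equivalence $\Gamma_0 \colon \cat{FHilbBundle}_X \to \cat{FHilb}_{C_0(X)}$ of Theorem~\ref{thm:takahashi} to the stated equivalence by matching the extra C*-algebra structure on both sides \emph{fibrewise}. First I would treat objects. Given a finite C*-bundle $p \colon E \twoheadrightarrow X$, Lemma~\ref{lem:cstarbundleishilbertbundle} makes it a finite Hilbert bundle, so $\Gamma_0(p)$ is an object of $\cat{FHilb}_{C_0(X)}$. The fibrewise multiplications $E_t \otimes E_t \to E_t$ and units $\C \to E_t$ are continuous bundle maps, so through the monoidal isomorphism $\Gamma_0(E) \otimes \Gamma_0(E) \simeq \Gamma_0(E \otimes E)$ of Corollary~\ref{cor:hilbertbundlemonoidal} they induce $\mu$ and $\eta$ on $\Gamma_0(p)$. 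Since each fibre is a finite-dimensional C*-algebra, hence a specialisable dagger Frobenius structure in $\cat{Hilb}$ (cf.\ Remark~\ref{rem:frobinhilbiscstar} and Proposition~\ref{prop:normal}), the associativity, unit, Frobenius, and specialisability equations hold in every localisation $\Loc_t$; by Theorem~\ref{thm:takahashi} an equation of morphisms in $\cat{FHilb}_{C_0(X)}$ holds globally as soon as it holds in each fibre, so $\Gamma_0(p)$ is a specialisable dagger Frobenius structure.

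On morphisms I would show $\Gamma_0$ is fully faithful. It is already fully faithful on the underlying bundle maps by Theorem~\ref{thm:takahashi}, so it suffices to see that a bundle map $f$ is fibrewise a $*$-homomorphism precisely when $\Gamma_0(f)$ preserves the multiplication $\mu$ and the involution~\eqref{eq:involution}. Both $\mu \circ (\Gamma_0(f) \otimes \Gamma_0(f)) = \Gamma_0(f) \circ \mu$ and the compatibility with~\eqref{eq:involution} are equations of morphisms in $\cat{FHilb}_{C_0(X)}$, so they again hold globally exactly when they hold in every localisation, that is, exactly when $f$ is a fibrewise $*$-homomorphism. This yields fullness and faithfulness at once.

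The substantial part is essential surjectivity. A specialisable dagger Frobenius structure $(E,\mu,\eta)$ is its own dagger dual and so lives in $\cat{FHilb}_{C_0(X)}$; by Theorem~\ref{thm:takahashi} we may write $E \simeq \Gamma_0(p)$ for a finite Hilbert bundle $p \colon \tilde E \twoheadrightarrow X$. Transporting $\mu$ and $\eta$ across this equivalence and using that $\Gamma_0$ is (strong) monoidal gives continuous fibrewise multiplications $\tilde E_t \otimes \tilde E_t \to \tilde E_t$ and units, and by Lemma~\ref{lem:localization} together with Proposition~\ref{prop:normal} each fibre $\tilde E_t = \Loc_t(E)$ is a finite-dimensional C*-algebra. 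Conditions (1) and (3) of Definition~\ref{def:cstarbundle} are then immediate from finiteness of the Hilbert bundle. The main obstacle is the local triviality, condition (2). I would fix $t_0 \in X$, choose a system of matrix units in $\tilde E_{t_0}$, and lift each to a continuous local section using density of sections in the field of Hilbert spaces (as in Theorem~\ref{thm:sectionsvanishingatinfinity}). These sections satisfy the matrix-unit relations exactly at $t_0$ and approximately nearby, and the technical heart is a perturbation argument showing that approximate matrix units in a continuous field of C*-algebras can be straightened to exact ones continuously on a neighbourhood $U$; this produces a fibrewise $*$-isomorphism $\varphi \colon U \times \tilde E_{t_0} \to \tilde E_U$ and hence the required local C*-trivialisation.

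Finally I would check that the restricted functor is monoidal and dagger-preserving. The dagger is inherited directly from Theorem~\ref{thm:takahashi}. For the monoidal structure, the fibrewise tensor product of finite C*-bundles corresponds under $\Gamma_0$ to the tensor product of the associated Frobenius structures: the tensor product of finite-dimensional C*-algebras is again one, the coherence isomorphisms are those of Corollary~\ref{cor:hilbertbundlemonoidal}, and equality of the two multiplications can once more be verified in each fibre and then extended globally by Theorem~\ref{thm:takahashi}. Combining these four steps gives the asserted equivalence of monoidal dagger categories.
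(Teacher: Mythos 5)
Your proposal is correct in outline, but the way you establish local triviality of the C*-bundle attached to a Frobenius structure --- the one genuinely hard step --- is a different route from the paper's. The paper never touches matrix units: writing $\Delta$ for the quasi-inverse of $\Gamma_0$, it embeds $E$ into its endomorphism algebra via the right regular representation $R \colon E \to E^* \otimes E$, observes that $\Delta(E^* \otimes E)$ is already a finite C*-bundle by Example~\ref{ex:pantsbundle} (which rests on Zito's result on 2-C*-categories), and then concludes that $\Delta(E)$ is a finite C*-bundle because $\Delta(R)$ exhibits it fibrewise as a subalgebra closed under the involution $\Delta(i)$, the compatibility $i \circ R = R_* \circ i$ being an equation built from tensor, composition and dagger and hence checkable fibrewise. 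You instead lift a system of matrix units from the fibre at $t_0$ to continuous local sections and invoke the classical perturbation lemma that approximate matrix units in a C*-algebra can be straightened to exact ones by continuous functional calculus; together with local constancy of the fibre dimension (Remark~\ref{rem:dimensioncontinuous}) this produces the trivialisation directly. Both arguments outsource their technical heart --- the paper to Zito, you to the Fell--Glimm-style perturbation lemma --- so yours is no less rigorous; it is more elementary and self-contained in spirit, while the paper's is shorter and stays inside the categorical calculus. Two points you should still make explicit: (i) the straightened matrix units generate a copy of $E_{t_0}$ in each nearby fibre, and you need the dimension count from Remark~\ref{rem:dimensioncontinuous} to see that this copy is all of $E_t$; and (ii) in the bundle-to-Frobenius direction, specialisability is the existence of a global central isomorphism $d$, not merely a fibrewise equation, so you should note that the canonical fibrewise specialisers are locally constant by local triviality and therefore assemble into a morphism in $\cat{Hilb}_{C_0(X)}$.
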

\begin{proof}
  By Proposition~\ref{prop:normal}, we may use $\cat{SFrob}_{C_0(X)}$ instead of $\cat{Frob}_{C_0(X)}$.
  Write $\Delta$ for the adjoint of $\Gamma_0$ of Theorem~\ref{thm:takahashi}.
  Let $(E,\mu,\eta)$ be a special dagger Frobenius structure in $\cat{Hilb}_{C_0(X)}$. 
  Equivalently, the embedding $R \colon E \to \L(E)$
  \[\begin{pic}[scale=.5]
    \node[dot] (d) {};
    \draw (d.north) to +(0,.5) node[right] {$E$};
    \draw (d.east) to[out=0,in=90] +(.5,-.5) to +(0,-.5) node[right] {$E$};
    \draw (d.west) to[out=180,in=90] +(-.5,-.5) to[out=-90,in=-90] +(-1,0) to +(0,1.25) node[left] {$E^*$};
  \end{pic}\]
  and the involution $i \colon E \to E^*$ of equation~\eqref{eq:involution} satisfy $i \circ R = R_* \circ i$~\cite[Corollary~9.7]{heunenkarvonen:monads}. 
  By Example~\ref{ex:pantsbundle}, $\Delta(E^* \otimes E)$ is a finite C*-bundle over $X$.
  Now, because both $i$ and $R$ are defined purely in terms of tensor products,  composition, and dagger, the above equations also hold fibrewise by Theorem~\ref{thm:takahashi}. 
  Hence $\Delta(E)$ is a finite Hilbert bundle, which embeds into $\Delta(E^* \otimes E)$ with $\Delta(R)$, and is closed under the involution $\Delta(i)$. 
  We conclude that $\Delta(E)$ is in fact a finite C*-bundle.
  The same reasoning establishes the converse: if $p$ is a finite C*-bundle, then $\Gamma_0(p)$ is a special(isable) dagger Frobenius structure in $\cat{Hilb}_{C(X)}$.
  Compare~\cite[Definition~21.7]{dixmierdouady:champs}.
  See also~\cite{yamagami:duality}.
\end{proof}

The rest of this section derives from the previous theorem some corollaries of interest to categorical quantum mechanics. We start with the phase group.

Recall that the \emph{phase group} of a dagger Frobenius structure $E$ consists of all morphisms $\phi \colon C_0(X) \to E$ satisfying
$(\phi^\dag \otimes \id) \circ \mu^\dag \circ \phi = \eta = (\id \otimes \phi^\dag) \circ \mu^\dag \circ \phi$~\cite{heunenvicary:cqm}. 
A \emph{group bundle} is a bundle $E \twoheadrightarrow X$ whose every fibre is a group, and such that each point $t_0 \in X$ has a group $G$ and a neighbourhood on which fibres are isomorphic to $G$.
Recall that the unitary group of a unital C*-algebra $E_t$ is $\{u \in E_t \mid uu^*=u^*u=1\}$.

\begin{corollary}\label{cor:phasegroup}
  The phase group of a dagger Frobenius structure $E$ in $\cat{FHilb}_{C_0(X)}$ is a group bundle $U(E) \twoheadrightarrow X$ whose fibres are the unitary groups of fibres of $E$. 
\end{corollary}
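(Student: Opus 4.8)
The plan is to leverage Theorem~\ref{thm:takahashi:algebras}, which already identifies dagger Frobenius structures $E$ in $\cat{FHilb}_{C_0(X)}$ with finite C*-bundles $p \colon E \twoheadrightarrow X$ via the functors $\Gamma_0$ and $\Delta$. The phase group is defined by the equations $(\phi^\dag \otimes \id) \circ \mu^\dag \circ \phi = \eta = (\id \otimes \phi^\dag) \circ \mu^\dag \circ \phi$ for morphisms $\phi \colon C_0(X) \to E$; since these are built purely from $\mu^\dag$, $\eta$, composition, tensor, and dagger, the central tool will again be that such categorical equations hold globally if and only if they hold fibrewise, by Theorem~\ref{thm:takahashi}. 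So first I would transport the defining phase equations down to each fibre $E_t$, which is a finite-dimensional C*-algebra by Lemma~\ref{lem:localization} (equivalently by Lemma~\ref{lem:frobeniuscstar} and Theorem~\ref{thm:takahashi:algebras}).

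Next I would identify the fibrewise phase condition with unitarity. In $\cat{FHilb}$, the phase elements of a dagger Frobenius structure corresponding to a finite-dimensional C*-algebra $A$ are exactly the unitaries of $A$; this is the standard characterisation of the phase group of a Frobenius algebra in $\cat{FHilb}$ (see~\cite{heunenvicary:cqm}), where the two phase equations precisely encode $u^*u = 1$ and $uu^* = 1$. Thus for each $t$, a morphism $\phi$ localises at $t$ to a unitary $u_t = \Loc_t(\phi) \in E_t$, and conversely each fibrewise unitary arises this way. This gives a fibrewise bijection between phase elements and $\coprod_{t \in X} U(E_t)$, and the multiplication of the phase group (given categorically by $\mu \circ (\phi \otimes \psi)$-type composition) restricts fibrewise to the product in $U(E_t)$.

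It then remains to check that $U(E) = \coprod_{t \in X} U(E_t)$ is a group bundle in the sense defined before the statement. The fibres are groups by construction, so the content is the local triviality: each $t_0 \in X$ has a neighbourhood on which the fibres are isomorphic to a fixed group $G$. For this I would invoke condition (2) of Definition~\ref{def:cstarbundle}: there is a neighbourhood $U$, a finite-dimensional C*-algebra $A$, and a homeomorphism $\varphi \colon U \times A \to E_U$ with each $\varphi(t,-) \colon A \to E_t$ a $*$-isomorphism. Since $*$-isomorphisms preserve unitaries, $\varphi$ restricts to a fibrewise-group-isomorphism homeomorphism $U \times U(A) \to U(E)_U$, so $G = U(A)$ witnesses local triviality. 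The subspace topology on $U(E) \subseteq E$ makes the group operations continuous because multiplication and involution are continuous on the C*-bundle $E$, as recorded in Section~\ref{sec:cstarbundles}.

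The main obstacle I anticipate is making the fibrewise-to-global transfer fully rigorous for the \emph{group} structure rather than merely for the underlying set: I must confirm that the categorically defined phase multiplication agrees, after applying $\Gamma_0$ and $\Delta$, with pointwise multiplication of unitaries, and that continuity of a phase element as a section follows from the bundle topology. Both points reduce to the monoidal dagger equivalence of Theorem~\ref{thm:takahashi:algebras} together with the continuity of the C*-bundle operations, so I expect no genuine difficulty beyond careful bookkeeping; the substantive input is entirely the fibrewise identification of phases with unitaries and the local triviality furnished by Definition~\ref{def:cstarbundle}(2).
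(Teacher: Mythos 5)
Your proposal is correct and follows essentially the same route as the paper, whose entire proof is the one-line remark that the general case follows from the case $X=1$ (where phases of a finite-dimensional C*-algebra are its unitaries, per~\cite{heunenvicary:cqm}) via the bundle equivalence. You simply make explicit the details the paper leaves implicit: the fibrewise transfer of the phase equations through Theorem~\ref{thm:takahashi:algebras} and the local triviality of $U(E)$ via Definition~\ref{def:cstarbundle}(2).
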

\begin{proof}
  The general case follows easily from the case $X=1$, which is a simple computation~\cite{heunenvicary:cqm}. 
\end{proof}

For example, for the trivial Frobenius structure $C_0(X)$ in $\cat{FHilb}_{C_0(X)}$, the phase group is the trivial bundle $U(1) \times X \twoheadrightarrow X$. 

We end this section by considering a more permissive notion of morphism between Frobenius structures, namely completely positive maps.

\begin{definition}
  A \emph{completely positive map} between finite C*-bundles over $X$ is a bundle map that is completely positive on each fibre. Write $\cat{FCstarBundle}_X^\cp$ for the category of finite C*-bundles and completely positive maps.
\end{definition}

In general, there is a construction that takes a monoidal dagger category $\cat{C}$ to a new one $\CPs[\cat{C}]$, see~\cite{coeckeheunenkissinger:cpstar}. Objects in $\CPs[\cat{C}]$ are special dagger Frobenius structures in $\cat{C}$. Morphisms $(E,\tinymult[whitedot]) \to (F,\tinymult)$ in $\CPs[\cat{C}]$ are morphisms $f \colon E \to F$ in $\cat{C}$ with
\begin{equation}\label{eq:cpstarcondition}
    \begin{pic}[yscale=.66]
      \node[morphism] (f) at (0,0) {$f$};
      \node[whitedot] (l) at (-.5,-1) {};
      \node[dot] (r) at (.5,1) {};
      \draw (l.east) to[out=0,in=-90] (f.south);
      \draw (f.north) to[out=90,in=180] (r.west);
      \draw (r.north) to (.5,2) node[above] {$F$};
      \draw (l.south) to (-.5,-2) node[below] {$E$};
      \draw (r.east) to[out=0,in=90,looseness=.4] (1,-2) node[below] {$F$};
      \draw (l.west) to[out=180,in=-90,looseness=.4] (-1,2) node[above] {$E$};
    \end{pic}
    \qquad = \qquad
    \begin{pic}
      \node[morphism,hflip] (t) at (0,.55) {$g$};
      \node[morphism] (b) at (0,-.55) {$g$};
      \draw (b.north) to node[right] {$G$} (t.south);
      \draw ([xshift=-1mm]t.north west) to +(0,.5) node[above] {$E$};
      \draw ([xshift=1mm]t.north east) to +(0,.5) node[above] {$F$};
      \draw ([xshift=-1mm]b.south west) to +(0,-.5) node[below] {$E$};
      \draw ([xshift=1mm]b.south east) to +(0,-.5) node[below] {$F$};
    \end{pic}
\end{equation}
for some object $G$ and some morphism $g \colon E \otimes F \to G$ in $\cat{C}$.

\begin{theorem}\label{thm:cpstar}
  There is an equivalence of compact dagger categories
  \[\begin{tikzpicture}
    \node (M) at (4,0) {$\CPs(\cat{Hilb}_{C_0(X)})$};
    \node (F) at (0,0) {$\cat{FCstarBundle}_X^\cp$};
    \draw[->] (F.east) to node[above] {$\Gamma_0$} (M.west);
  \end{tikzpicture}\]
  for any paracompact locally compact Hausdorff space $X$.
\end{theorem}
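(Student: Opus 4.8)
The plan is to reduce to the object-level correspondence already established in Theorem~\ref{thm:takahashi:algebras}, and then to verify that $\Gamma_0$ matches the two notions of morphism: the abstract CP* condition~\eqref{eq:cpstarcondition} on the one side and fibrewise complete positivity on the other. Write $\Delta$ for an inverse of the equivalence $\Gamma_0 \colon \cat{FHilbBundle}_X \to \cat{FHilb}_{C_0(X)}$ of Theorem~\ref{thm:takahashi}. On objects there is nothing new: by Theorem~\ref{thm:takahashi:algebras} (together with Proposition~\ref{prop:normal}) the special dagger Frobenius structures in $\cat{Hilb}_{C_0(X)}$, which are exactly the objects of $\CPs(\cat{Hilb}_{C_0(X)})$, correspond bijectively under $\Delta$ to finite C*-bundles, the objects of $\cat{FCstarBundle}_X^\cp$. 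It remains to show that, under the existing bijection $f \mapsto \Delta(f)$ on morphisms of $\cat{FHilb}_{C_0(X)}$, a morphism $f \colon E \to F$ satisfies~\eqref{eq:cpstarcondition} if and only if $\Delta(f)$ is completely positive on each fibre.

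For the forward direction I would localize. The condition~\eqref{eq:cpstarcondition} is phrased purely in terms of $\otimes$, $\circ$ and $\dag$, so applying the strong monoidal dagger functor $\Loc_t \colon \cat{Hilb}_{C_0(X)} \to \cat{Hilb}$ of Proposition~\ref{prop:localization} at a point $t \in X$ shows that $\Loc_t(f)$ satisfies~\eqref{eq:cpstarcondition} in $\cat{Hilb}$ with witness $\Loc_t(g)$. By the defining theorem of the CP*-construction for finite-dimensional C*-algebras~\cite{coeckeheunenkissinger:cpstar}, the morphisms of $\CPs(\cat{FHilb})$ between special dagger Frobenius structures are exactly the completely positive maps; hence each $\Loc_t(f) = \Delta(f)_t$ is completely positive, that is, $\Delta(f)$ is a morphism of $\cat{FCstarBundle}_X^\cp$.

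The converse is the technical heart. Suppose $\Delta(f)$ is fibrewise completely positive. Fibrewise, the same result supplies for each $t$ a witness realising~\eqref{eq:cpstarcondition}, namely a Kraus decomposition. The obstacle is to assemble these fibrewise data into a single global ancilla $G$ in $\cat{Hilb}_{C_0(X)}$ and a single global morphism $g \colon E \otimes F \to G$. I would do this by a partition-of-unity argument, exactly as paracompactness was used in the proof of (b)$\Rightarrow$(c) in Theorem~\ref{thm:dualobjects}: choose a locally finite cover $\{U_j\}$ trivialising $E$ and $F$, on which continuous local witnesses $g_j \colon (E \otimes F)|_{U_j} \to G_j$ exist, pick a subordinate partition of unity $\{\rho_j\}$, and set $G = \bigoplus_j G_j$ and $g = \bigoplus_j \sqrt{\rho_j}\, g_j$. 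Here $G$ need only be an object of the ambient category $\cat{Hilb}_{C_0(X)}$, not itself finite, so the direct sum of Hilbert modules is legitimate; local finiteness keeps each fibre $G_t$ finite-dimensional and makes $g$ well-defined. Because the summands $G_j$ are mutually orthogonal, contracting $g$ with $g^\dag$ through the Frobenius structure produces $\sum_j \rho_j$ times the common local form, which is $\sum_j \rho_j$ times the side of~\eqref{eq:cpstarcondition} built from $f$; as $\sum_j \rho_j = 1$, the global equation~\eqref{eq:cpstarcondition} follows. I expect the care needed to check continuity and convergence of this direct sum, and that the assembled $g$ genuinely satisfies~\eqref{eq:cpstarcondition} globally rather than only fibrewise, using that both sides are built from $\otimes$, $\circ$ and $\dag$ and hence determined fibrewise by Theorem~\ref{thm:takahashi}, to be the main difficulty.

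These two directions make $\Gamma_0$ full and faithful between $\cat{FCstarBundle}_X^\cp$ and $\CPs(\cat{Hilb}_{C_0(X)})$; combined with the object correspondence it is an equivalence. That it is an equivalence of compact dagger categories follows as before: $\cat{FCstarBundle}_X^\cp$ inherits the compact dagger structure from Theorem~\ref{thm:takahashi:algebras}, the CP*-construction preserves the dagger compact structure of $\cat{Hilb}_{C_0(X)}$, and $\Gamma_0$ respects it because it is (strong) monoidal by Corollary~\ref{cor:hilbertbundlemonoidal}.
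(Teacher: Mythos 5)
Your proposal is correct and follows the same overall strategy as the paper: reduce objects to Theorem~\ref{thm:takahashi:algebras}, transfer the condition~\eqref{eq:cpstarcondition} across the monoidal equivalence, and invoke the finite-dimensional CP*-characterisation of~\cite{coeckeheunenkissinger:cpstar} fibrewise. The difference is in the converse direction, which you rightly identify as the technical heart: the paper's proof only argues explicitly that a morphism satisfying~\eqref{eq:cpstarcondition} is completely positive in each fibre, and leaves the assembly of a global witness $(G,g)$ from fibrewise complete positivity implicit, whereas you supply an explicit partition-of-unity gluing. Your gluing does work, with one caveat: you should not appeal to ``a Kraus decomposition'' varying continuously over $U_j$, since Kraus decompositions of a continuous family of completely positive maps need not vary continuously (eigenvalue crossings). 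What does vary continuously is the square root: over a trivialising $U_j$ the left-hand side of~\eqref{eq:cpstarcondition} is a positive element of the C*-algebra $C_b(U_j,\L(A\otimes B))$, hence has a continuous square root, and that square root is your local witness $g_j$. Once this is observed, the partition of unity becomes unnecessary: the left-hand side of~\eqref{eq:cpstarcondition} is a self-adjoint element of the C*-algebra $\L(E\otimes F)$ that is positive in each fibre, hence positive, hence of the form $g^\dag\circ g$ for a single global $g$ with $G=E\otimes F$. This shortcut buys a cleaner argument and avoids the convergence checks for the infinite direct sum $\bigoplus_j G_j$; your version buys nothing extra but is still valid once the local witnesses are taken to be square roots rather than Kraus maps. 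The closing remarks on compact dagger structure match the paper.
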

\begin{proof}
  The correspondence on objects is already clear from Theorem~\ref{thm:takahashi:algebras}.
  By definition, morphisms in $\CPs(\cat{Hilb}_{C_0(X)})$ are morphisms in $\cat{Hilb}_{C_0(X)}$ that satisfy~\eqref{eq:cpstarcondition}. Because the equivalence is monoidal, these correspond to morphisms between finite C*-bundles that satisfy the same condition. By Theorem~\ref{thm:takahashi:algebras} the condition also holds in each fibre. Hence~\cite{coeckeheunenkissinger:cpstar} these morphisms are completely positive maps in each fibre.
\end{proof}

\section{Commutativity}\label{sec:commutativity}

In this section we will completely characterise the commutative special dagger Frobenius structures in the category of Hilbert modules.
By Theorem~\ref{thm:takahashi:algebras}, they correspond to commutative finite C*-bundles. In this section we phrase that in terms of Gelfand duality, generalizing~\cite{pavlovtroitskii:branchedcoverings}. 
We first reduce to nondegenerate Frobenius structures. 

\begin{lemma}\label{lem:nonzerodimension}
  Let $X$ be a locally compact Hausdorff space.
  Any (specializable) dagger Frobenius structure in $\cat{Hilb}_{C_0(X)}$ is determined by
  a nondegenerate (specializable) one in $\cat{Hilb}_{C_0(U)}$ for a clopen subset $U \subseteq X$.
\end{lemma}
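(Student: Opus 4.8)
The plan is to identify the clopen set $U$ as the locus where the localized fibres are nonzero, show that the structure lives entirely over $U$, and that there it becomes nondegenerate.

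First I would reduce to the special case: by Proposition~\ref{prop:normal} any specialisable dagger Frobenius structure is $*$-isomorphic to a special one with the same underlying module, the specialiser being a central isomorphism that restricts along any clopen decomposition; so it suffices to treat a special $(E,\mu,\eta)$ and carry the specialiser along verbatim. A special dagger Frobenius structure is its own dagger dual (with unit $\mu^\dag\circ\eta$), so by Theorem~\ref{thm:dualobjects} (the implication (a)$\Rightarrow$(b), valid for arbitrary locally compact Hausdorff $X$) the module $E$ is a finite Hilbert bundle with fibres $\Loc_t(E)$, each a finite-dimensional C*-algebra by Lemma~\ref{lem:localization}.

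Next I would analyse the positive scalar $\phi=\eta^\dag\circ\eta$, which by Lemma~\ref{lem:stonecechscalars} is multiplication by a function $\phi\in C_b(X)$ with $\phi\geq 0$. Since localization is a dagger monoidal functor (Proposition~\ref{prop:localization}), localizing at $t$ sends $\eta$ to the unit $1_{\Loc_t(E)}$, so $\phi(t)=\inprod{1}{1}_{\Loc_t(E)}$ is strictly positive exactly when $\Loc_t(E)\neq 0$. Set $U=\{t\in X\mid \Loc_t(E)\neq 0\}=\{t\mid\dim\Loc_t(E)>0\}$. By Remark~\ref{rem:dimensioncontinuous} the dimension $t\mapsto\dim\Loc_t(E)$ is a continuous $\N$-valued function, hence locally constant, so $U$ is clopen. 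Moreover $\phi$ is itself locally constant on $U$ (on a trivializing neighbourhood it equals $\tr(1_A)$ for the fixed fibre algebra $A$), and as the fibre dimensions are bounded it takes only finitely many positive values on $U$, hence is bounded below there.

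I would then show the structure already lives over $U$. Because the inner product is computed fibrewise (Theorem~\ref{thm:sectionsvanishingatinfinity}), $\inprod{x}{x}_E(t)=\|x\|^2_{\Loc_t(E)}$ vanishes on $X\setminus U$, so every $\inprod{x}{y}_E$ lies in the ideal $C_0(U)\subseteq C_0(X)$ and $x\cdot f=0$ whenever $f$ is supported off $U$. Thus the $C_0(X)$-action factors through the restriction $C_0(X)\twoheadrightarrow C_0(U)$, making $E$ a Hilbert $C_0(U)$-module on which $\mu$ and $\eta$ (which likewise factor through $C_0(U)$) define a dagger Frobenius structure; it is nondegenerate because $\eta^\dag\circ\eta=\phi|_U$ is bounded below, hence invertible as a multiplier of $C_0(U)$. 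Finally, as $U$ is clopen, $\cat{Hilb}_{C_0(X)}$ splits as a product indexed by $U$ and $X\setminus U$, and the original structure is recovered from the nondegenerate one over $U$ by extension with the zero structure over $X\setminus U$; this is the sense in which it is \emph{determined by} the restriction. The main obstacle is establishing that $U$ is clopen and that $\phi|_U$ is bounded below, both of which rest on the dimension function being locally constant with finitely many values, i.e.\ on pushing the finite-Hilbert-bundle structure through the localization functor; once this discreteness is in hand, the factorization of the module and its operations through $C_0(U)$ is routine.
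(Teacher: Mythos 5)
Your proof is correct and follows essentially the same route as the paper's: identify $U$ as the support of the continuous $\mathbb{N}$-valued fibre-dimension function of the associated finite bundle (hence clopen), and deduce nondegeneracy from the fact that the scalar $\eta^\dag\circ\eta$ is integer-valued and strictly positive on $U$. The only cosmetic difference is that the paper invokes Theorem~\ref{thm:takahashi:algebras} directly (remarking that paracompactness is not needed there), whereas you route through Theorem~\ref{thm:dualobjects}(a)$\Rightarrow$(b) and Lemma~\ref{lem:localization}; you also spell out the factorization of the module structure through $C_0(U)$, which the paper leaves implicit.
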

\begin{proof}
  Let $E \in \cat{Frob}_{C_0(X)}$. By Theorem~\ref{thm:takahashi:algebras} it corresponds to a finite C*-bundle. (Note that this does not need paracompactness.) So $t \mapsto \dim(E_t)$ is a continuous function $X \to \mathbb{N}$, and $U=\{t \in X \mid \dim(E_t)>0\}$ is clopen.
  We need to show that the restricted finite C*-bundle over $U$ is nondegenerate. Note that $\dim(E_t)$ is the value of the scalar $\eta^\dag \circ \mu \circ \mu^\dag \circ \eta \in C_b(X)$ at $t$. In particular, it takes values in $\mathbb{N}$, and if $t \in U$, then it is invertible. 
\end{proof}

Next, we show that any nondegenerate specialisable dagger Frobenius structure in $\cat{Hilb}_{C_0(X)}$ is induced by a finite bundle $p \colon Y \twoheadrightarrow X$. 

\begin{proposition}\label{prop:finitefibres}
  Let $X$ be a paracompact locally compact Hausdorff space.
  Any commutative nondegenerate specialisable dagger Frobenius structure in $\cat{Hilb}_{C_0(X)}$ is isomorphic as a $*$-algebra to $C_0(Y)$ for some locally compact Hausdorff space $Y$ through a finite bundle $p \colon Y \twoheadrightarrow X$.
\end{proposition}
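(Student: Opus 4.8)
The plan is to combine the C*-algebraic description of the Frobenius structure from Theorem~\ref{thm:takahashi:algebras} with Gelfand duality: I will take $Y$ to be the Gelfand spectrum of $E$ and read off the bundle map $p$ from the unit $\eta$. First I would reduce to the special case. By Proposition~\ref{prop:normal} the given specialisable structure is $*$-isomorphic to a special one, so I may assume $E$ is a commutative \emph{special} dagger Frobenius structure. By Lemma~\ref{lem:frobeniuscstar} it is then a C*-algebra, and commutativity of $\mu$ makes it a commutative C*-algebra. Gelfand duality thus supplies a locally compact Hausdorff space $Y=\Spec(E)$ together with a $*$-isomorphism $E \simeq C_0(Y)$.

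Next I would extract the bundle map from the unit. The left unit law $\mu\circ(\eta\otimes\id)=\lambda_E$ shows that the $C_0(X)$-module action on $E$ is multiplication by $\eta(f)$, whence $\eta(fg)=f\cdot\eta(g)=\eta(f)\eta(g)$, so that $\eta\colon C_0(X)\to E$ is a $*$-homomorphism making $E$ a $C_0(X)$-algebra. It is nondegenerate in the sense that $\eta(C_0(X))E$ is dense in $E$, which is exactly the nondegeneracy of the module action established in the proof of Proposition~\ref{prop:monoidal}. Under $E\simeq C_0(Y)$, Gelfand duality turns $\eta$ into a continuous map $p\colon Y\to X$, and the fact that $\eta$ lands in $C_0(Y)$ rather than merely $C_b(Y)$ forces $p$ to be proper. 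Surjectivity follows from nondegeneracy of the Frobenius structure: as noted in Lemma~\ref{lem:nonzerodimension}, the scalar $\eta^\dag\circ\mu\circ\mu^\dag\circ\eta$ equals $t\mapsto\dim(E_t)$, which is invertible, so every localisation is nonzero and hence $p^{-1}(t)\neq\emptyset$.

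For finiteness I would identify the fibres of $p$ with the spectra of the fibres of $E$. A character $\chi\in Y=\Spec(E)$ satisfies $p(\chi)=t$ precisely when $\chi\circ\eta=\ev_t$, that is, when $\chi$ factors through the localisation $\Loc_t(E)=E/\overline{\eta(\{f\mid f(t)=0\})E}$; hence $p^{-1}(t)\simeq\Spec(\Loc_t(E))$. By Lemma~\ref{lem:localization} each $\Loc_t(E)$ is a finite-dimensional commutative C*-algebra, isomorphic to $\C^{\dim(E_t)}$, whose spectrum is a finite discrete set of cardinality $\dim(E_t)$. Since dagger Frobenius structures in $\cat{Hilb}_{C_0(X)}$ lie in $\cat{FHilb}_{C_0(X)}$ (Theorem~\ref{thm:dualobjects}, where paracompactness enters), the corresponding finite Hilbert bundle has bounded dimension $\sup_t\dim(E_t)=n<\infty$, so all fibres of $p$ have cardinality at most $n$, and $p$ is a finite bundle in the sense of Definition~\ref{def:covering}. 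Finally the isomorphism $E\simeq C_0(Y)$ is $C_0(X)$-linear by construction, since $p$ was defined from $\eta$: the module action $f\cdot s = (f\circ p)\,s$ is precisely that of Example~\ref{ex:CYoverCX}.

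The main obstacle I anticipate is the bookkeeping around the unit $\eta$: confirming that it is genuinely a nondegenerate $*$-homomorphism, that the resulting $p$ is proper, and above all the clean identification $p^{-1}(t)\simeq\Spec(\Loc_t(E))$, which is where the $C_0(X)$-algebra fibre must be matched with the paper's localisation functor. Once that correspondence is in place, everything else is a routine application of Gelfand duality together with the already-established bundle description.
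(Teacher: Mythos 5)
Your proposal is correct and follows essentially the same route as the paper: reduce to the special case via Proposition~\ref{prop:normal}, invoke Lemma~\ref{lem:frobeniuscstar} and Gelfand duality to get $E\simeq C_0(Y)$, use the unit law to see $\eta$ is a nondegenerate $*$-homomorphism hence of the form $-\circ p$, and bound the fibres using finite presentability/finite-dimensionality of the localisations. The only cosmetic differences are that you obtain surjectivity of $p$ from non-vanishing of $\dim(E_t)$ rather than from injectivity of $\eta$, and you phrase the fibre bound via $p^{-1}(t)\simeq\Spec(\Loc_t(E))$ rather than the paper's Tietze linear-independence count; both variants are sound.
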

\begin{proof}
  By Proposition~\ref{prop:normal} 
   we may assume that the given dagger Frobenius structure $E$ is special.
  It then follows from Lemma~\ref{lem:frobeniuscstar} that $E$ is of the form $C_0(Y)$ for some locally compact Hausdorff space $Y$. 
  Applying Lemmas~\ref{lem:wellpointed} and~\ref{lem:stonecechscalars} to the unit law $\mu \circ (\eta \otimes \eta) = \eta \circ \lambda$ shows that the map $\eta \colon C_0(X) \to C_0(Y)$ is multiplicative. Being a morphism in $\cat{Hilb}_{C_0(X)}$ it is also additive. It preserves the involution by definition of dual objects.
  Hence $\eta$ is a $*$-homomorphism, which is nondegenerate as in Proposition~\ref{prop:monoidal}.
  By Gelfand duality, therefore $\eta$ is of the form $- \circ p \colon C_0(X) \to C_0(Y)$ for a continuous map $p \colon Y \to X$.
  Because $\eta \colon C_0(X) \to C_0(Y)$ is injective by nondegeneracy, $p$ is surjective.

  The complex vector space $C_0(p^{-1}(t))$ contains at least as many linearly independent elements as distinct elements $y_i$ of
  $p^{-1}(t)$, 
  namely the continuous extension of $y_j \mapsto \delta_{ij}$ by Tietze's extension theorem.
  But $C_0(Y)$ is finitely presented projective as a $C_0(X)$-module by Theorem~\ref{thm:dualobjects}, so
  there is a natural number $n$ and some $E \in \cat{FHilb}_{C_0(X)}$ such that for each $t \in X$ we have $C_0(p^{-1}(t)) \oplus E_t \simeq \mathbb{C}^n$ by localising as in Proposition~\ref{prop:localization}. 
  Thus $\dim(C_0(p^{-1}(t))) \leq n$, and hence $p^{-1}(t)$ has cardinality at most $n$, for each $t \in X$.
\end{proof}

Our next goal is to show that the finite bundle $p$ is of the form of Lemma~\ref{lem:coveringfrobenius}. We will do this in several steps.
To show that $p$ must in fact be a finite covering, we first prove $p$ is an open map.



\begin{lemma}\label{lem:open}
  Let $X$ be a paracompact locally compact Hausdorff space.
  Nondegenerate commutative specialisable dagger Frobenius structures in $\cat{Hilb}_{C_0(X)}$ are of the form $C_0(Y)$ for a finite bundle $p \colon Y \twoheadrightarrow X$ that is open.
\end{lemma}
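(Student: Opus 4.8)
The plan is to deduce openness of $p$ from the fibrewise description of the inner product together with a bump-function argument. By Proposition~\ref{prop:finitefibres} we already know $E \cong C_0(Y)$ for a finite bundle $p \colon Y \twoheadrightarrow X$, and by Theorem~\ref{thm:takahashi:algebras} the algebra $E$ is the section algebra of a finite commutative C*-bundle whose fibre over $t$ is the commutative C*-algebra $C_0(p^{-1}(t)) \cong \C^{|p^{-1}(t)|}$, with the fibre map $E \to \Loc_t(E)$ given by restriction $g \mapsto g|_{p^{-1}(t)}$. Regarding $E$ as $\Gamma_0$ of its underlying finite Hilbert bundle as in Theorem~\ref{thm:sectionsvanishingatinfinity}, the Hilbert-module inner product is computed fibrewise, $\inprod{g}{g}_E(t) = \inprod{g|_{p^{-1}(t)}}{g|_{p^{-1}(t)}}_{\Loc_t(E)}$, and this fibre inner product is positive definite because $\Loc_t(E)$ is a genuine finite-dimensional Hilbert space.

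To prove openness, let $V \subseteq Y$ be open and let $t_0 \in p(V)$, say $t_0 = p(y_0)$ with $y_0 \in V$. Since $Y$ is locally compact Hausdorff, Urysohn's lemma (applied to $\{y_0\} \subseteq V$) furnishes a compactly supported $g \in C_0(Y)$ with $g(y_0) = 1$ and $\supp g \subseteq V$. Consider $f := \inprod{g}{g}_E \in C_0(X)$. At $t_0$ the restriction $g|_{p^{-1}(t_0)}$ is a nonzero vector of $\Loc_{t_0}(E)$, as its $y_0$-coordinate equals $1$, so positive definiteness gives $f(t_0) > 0$. As $f$ is continuous, there is an open neighbourhood $N \ni t_0$ on which $f > 0$.

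Now for each $t \in N$ we have $\inprod{g|_{p^{-1}(t)}}{g|_{p^{-1}(t)}}_{\Loc_t(E)} = f(t) > 0$, so $g|_{p^{-1}(t)} \neq 0$; that is, there exists $y \in p^{-1}(t)$ with $g(y) \neq 0$. Then $y \in \supp g \subseteq V$ and $p(y) = t$, whence $t \in p(V)$. Therefore $N \subseteq p(V)$, exhibiting a neighbourhood of $t_0$ contained in $p(V)$. Since $t_0 \in p(V)$ was arbitrary, $p(V)$ is open, and hence $p$ is an open map.

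I expect the only delicate point to be the transfer of data between the two pictures in the first paragraph: identifying the C*-algebraic fibre of $C_0(Y)$ at $t$ with the Hilbert-space localization $\Loc_t(E)$ so that the fibre map is honest restriction to $p^{-1}(t)$, and concluding that the Hilbert-module inner product evaluated at $t$ is the positive-definite fibre inner product applied to $g|_{p^{-1}(t)}$. Everything after that is a soft continuity argument, and the finiteness of the fibres is used only implicitly, to guarantee that we are working with a genuine finite bundle whose fibre algebras are $\C^{|p^{-1}(t)|}$.
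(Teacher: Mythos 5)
Your proof is correct and takes essentially the same approach as the paper: both push a Urysohn bump function supported in $V$ down to a continuous function on $X$ (strictly positive at $t_0$ by fibrewise positivity/nondegeneracy, vanishing off $p(V)$) and conclude openness from continuity. The only differences are cosmetic: you use $\inprod{g}{g}_E$, i.e.\ $\eta^\dag$ applied to $|g|^2$, where the paper applies $\eta^\dag$ to the bump directly, and you argue directly on a neighbourhood of $t_0$ rather than by contradiction at a limit point of $X \setminus p(V)$.
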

\begin{proof}
  By Theorem~\ref{thm:takahashi:algebras} a specialisable dagger Frobenius structure $E$ in $\cat{Hilb}_{C_0(X)}$ corresponds to a finite C*-bundle, whose fibres have uniformly bounded dimension.
  We need to show that $p$ is open; suppose for a contradiction that it is not.
  Let $V \subseteq Y$ be an open set such that $p(V) \subseteq X$ is not open.
  Fix a limit point $t_0 \in p(V)$ of $X\setminus p(V)$, and pick $s_0 \in V$ with $p(s_0)=t_0$. Urysohn's lemma now provides a continuous function $y \colon Y \to [0,1]$ with $y(s_0)=1$ that vanishes outside a compact subset of $V$ and hence vanishes at infinity.
  Now $\eta^\dag(y)(t)=0$ if and only if $\sum_{p(s)=t} y(s)=0$ for all $t \in X$, so $\eta^\dag(y)$ vanishes on $X \setminus p(V)$.
  But $\eta^\dag(y)(t_0)>0$ by Lemma~\ref{lem:nonzerodimension}, contradicting continuity of $\eta^\dag$.
  See also~\cite[Theorem~5.6]{pavlovtroitskii:branchedcoverings}, \cite[2.2.3]{blanchardkirchberg:glimmhalving} and~\cite[Theorem~4.3]{ivankov:quantization}.
\end{proof}

Next, we show that $p \colon Y \twoheadrightarrow X$ must also be a closed map. When $Y$ is compact and $X$ is Hausdorff this is automatic because continuous images of compact spaces are compact and compact subsets of Hausdorff spaces are closed; we show that it also holds when $Y$ is only locally compact.

\begin{lemma}\label{lem:closed}
  Finite bundles $p \colon Y \twoheadrightarrow X$ of locally compact Hausdorff spaces are closed.
\end{lemma}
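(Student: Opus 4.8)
The plan is to show that the complement $X \setminus p(C)$ of the image of a closed set $C \subseteq Y$ is open, by producing around each point $t_0 \notin p(C)$ a neighbourhood that misses $p(C)$ entirely. Since $p$ is a finite bundle, the fibre $p^{-1}(t_0) = \{y_1,\dots,y_k\}$ is finite, and because $t_0 \notin p(C)$ none of the $y_i$ lie in the closed set $C$. Using that $Y$ is locally compact Hausdorff, I would choose pairwise disjoint open neighbourhoods $W_i \ni y_i$ whose closures $\overline{W_i}$ are compact and still disjoint from $C$, and set $W = \bigcup_i W_i$. Then $W$ is an open set containing the whole fibre $p^{-1}(t_0)$, with $\overline{W} = \bigcup_i \overline{W_i}$ compact and $\overline{W} \cap C = \varnothing$.

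The heart of the argument is then a tube lemma: I would find an open $U \ni t_0$ with $p^{-1}(U) \subseteq W$. Granting this, $p^{-1}(U) \cap C \subseteq W \cap C = \varnothing$, so $U \cap p(C) = \varnothing$ and $t_0$ is interior to the complement, as required. A first step towards $U$ uses the compact frontier $K = \overline{W} \setminus W$: its image $p(K)$ is compact, hence closed in the Hausdorff space $X$, and $t_0 \notin p(K)$ because $p^{-1}(t_0) \subseteq W$, so $X \setminus p(K)$ is an open neighbourhood of $t_0$ that controls all points of $C$ lying inside $\overline{W}$.

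The main obstacle is that the frontier $K$ only governs preimage points inside $\overline{W}$: a priori there could be points of $C$ lying entirely outside $\overline{W}$ whose $p$-images still accumulate at $t_0$, so the elementary neighbourhood bookkeeping does not by itself confine $p^{-1}(U)$ to $W$. Ruling out such ``escaping'' points is exactly where the finiteness of the bundle and local compactness must be combined, and I expect this to be the crux of the proof. Concretely, I would recast the goal as properness: over a compact neighbourhood $N$ of $t_0$ the preimage $p^{-1}(N)$ should have compact closure, using that the bounded finite fibres and local triviality of $p$ forbid new sheets from appearing near $t_0$. Then from any net $c_\alpha \in C$ with $p(c_\alpha) \to t_0$ one extracts a subnet converging to some $c \in C$, and continuity gives $p(c) = t_0$, so $c \in p^{-1}(t_0) \subseteq W$, contradicting $C \cap W = \varnothing$. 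This compactness/confinement step, rather than the routine construction of $W$, is the part that genuinely requires both hypotheses and deserves the most care.
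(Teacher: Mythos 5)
Your first half is fine, and you have correctly put your finger on the crux: constructing the disjoint compact neighbourhoods $W_i$ of the fibre over $t_0$ and controlling the frontier $\overline{W}\setminus W$ is routine, and the whole difficulty is to rule out points of $C$ lying entirely outside $\overline{W}$ whose images nevertheless accumulate at $t_0$. The problem is that your proposed resolution of that step does not follow from the hypotheses. You justify properness of $p$ over a compact neighbourhood by appealing to ``local triviality of $p$'', but a \emph{finite bundle} in the sense of Definition~\ref{def:covering} is only a continuous surjection with uniformly bounded finite fibres; local triviality is what distinguishes a \emph{covering} from a bundle, and this lemma is stated for bundles. Moreover the confinement claim is genuinely false at this level of generality: take $X=[0,1]$, $Y=(0,1]\sqcup\{*\}$ with $p$ the inclusion on $(0,1]$ and $p(*)=0$. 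This is a continuous surjection of locally compact Hausdorff spaces with all fibres singletons, yet $C=\{1/n \mid n\geq 1\}$ is closed in $Y$ while $p(C)$ is not closed in $X$; equivalently, $p^{-1}([0,1/2])$ has non-compact closure, so the properness you want simply fails. So the gap you flagged cannot be closed from the stated hypotheses, and the statement as literally written is false without some extra input (openness of $p$ alone does not suffice either; one really needs the local triviality present in the intended application, where $p$ comes from a finite C*-bundle via Theorem~\ref{thm:takahashi:algebras}).

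For comparison, the paper argues the contrapositive way, with nets: given $t_\alpha\to t$ in $p(V)$ it lifts to $s_\alpha\in p^{-1}(t_\alpha)\cap V$, surrounds the finite set $p^{-1}(t)\cap V$ by compact neighbourhoods $V_i\subseteq V$, and asserts that $s_\alpha$ is eventually in $\bigcup_i V_i$ ``because this finite union is compact''. That assertion is exactly the same confinement step you isolated, and it is equally unjustified (compactness of a set does not force a net to enter it; in the counterexample above the relevant union is even empty). So your diagnosis of where the real work lies is more candid than the paper's own write-up, but neither argument supplies the missing ingredient; a correct proof must import the local product structure available in the situation where the lemma is actually used.
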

\begin{proof}
  Suppose $V\subseteq Y$ is closed. We want to show that $U=p(V) \subseteq X$ is closed. Let $t_\alpha$ be a net in $U$ that converges to $t \in X$. Pick $s_\alpha$ in $p^{-1}(t_\alpha) \cap V$. Say $p^{-1}(t) \cap V = \{s_1,\ldots,s_n\}$. Pick compact neighbourhoods $V_i \subseteq V$ of $s_i$ (possible because $Y$ is locally compact). Then $s_\alpha$ is eventually in $\bigcup_i V_i$ (because this finite union is compact). So a subnet of $s_\alpha$ converges to one of the $s_i \in V$. But then, by continuity of $p$, a subnet of $t_\alpha$ converges to $p(s_i) \in U$. But then $t=p(s_i)$ is in $U$ (because $X$ is Hausdorff).    
\end{proof}



Finally, we can show that the $p$ must be a finite covering.

\begin{proposition}\label{prop:covering}
  Let $X$ be a paracompact locally compact Hausdorff space. 
  Any nondegenerate commutative specialisable dagger Frobenius structure in $\cat{Hilb}_{C_0(X)}$ is of the form $C_0(Y)$ for a finite covering $p \colon Y \twoheadrightarrow X$.
\end{proposition}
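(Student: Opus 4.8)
The plan is to combine the structural results already assembled---namely that $E \simeq C_0(Y)$ for a finite bundle $p \colon Y \twoheadrightarrow X$ (Proposition~\ref{prop:finitefibres}) which is open (Lemma~\ref{lem:open}) and closed (Lemma~\ref{lem:closed})---with one further input that the abstract Frobenius structure supplies but a bare topological bundle does not: the fibre cardinality is \emph{locally constant}. By Proposition~\ref{prop:normal} I may assume $E$ is special. By Theorem~\ref{thm:takahashi:algebras} it is then a finite C*-bundle, so by Remark~\ref{rem:dimensioncontinuous} the fibre dimension $t \mapsto \dim(\Loc_t(E))$ is a continuous $\mathbb{N}$-valued function, hence locally constant. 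Since $E = C_0(Y)$ is commutative, the fibre $\Loc_t(E) \simeq C_0(p^{-1}(t)) \simeq \mathbb{C}^{|p^{-1}(t)|}$ as in the proof of Proposition~\ref{prop:finitefibres}, so the cardinality $t \mapsto |p^{-1}(t)|$ is itself locally constant. This is precisely the ingredient that forbids branch points, as anticipated in the introduction.

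Granting this, I would argue locally. Fix $t_0 \in X$, write $p^{-1}(t_0) = \{y_1,\ldots,y_n\}$, and use that $Y$ is Hausdorff to choose pairwise disjoint open sets $W_i \ni y_i$. Because $p$ is closed, the set $p(Y \setminus \bigcup_i W_i)$ is closed and misses $t_0$; intersecting its complement with each open set $p(W_i)$ (open since $p$ is open, and containing $t_0$) and with a neighbourhood on which $|p^{-1}(-)| = n$ yields an open neighbourhood $U \ni t_0$ satisfying $p^{-1}(U) \subseteq \bigcup_i W_i$ and $U \subseteq p(W_i)$ for every $i$.

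It then remains to check that $p$ restricts to a homeomorphism $W_i \cap p^{-1}(U) \to U$ for each $i$. Surjectivity is immediate from $U \subseteq p(W_i)$. For injectivity I would count: every fibre over $U$ has exactly $n$ points, lies inside the $n$ disjoint sets $W_i$, and meets each $W_i$ by surjectivity, so it meets each $W_i$ in exactly one point. Each restriction is thus a continuous open bijection, hence a homeomorphism, and the decomposition $p^{-1}(U) = \bigsqcup_i \bigl(W_i \cap p^{-1}(U)\bigr)$ exhibits $p$ as a covering near $t_0$. Since $t_0$ was arbitrary, $p$ is a finite covering.

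The main obstacle is not this final topological bookkeeping, which is routine once openness, closedness, and local constancy of the fibre count are in hand, but rather securing that local constancy: openness and closedness alone are consistent with branched covers such as $z \mapsto z^2$ on the disc, whose fibre size drops at the origin, and it is only the dual-object (finite C*-bundle) structure, via continuity of the dimension function, that rules this out.
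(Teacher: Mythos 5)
Your proof is correct and follows essentially the same route as the paper's: continuity of the fibre dimension (from the finite C*-bundle structure) gives local constancy of $|p^{-1}(t)|$, and then openness, closedness, Hausdorff separation of the fibre, and a counting argument produce the local trivialisation. The only difference is that you spell out explicitly the tube-style construction of the neighbourhood $U$ with $p^{-1}(U)\subseteq\bigcup_i W_i$, which the paper delegates to an external reference.
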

\begin{proof}
  We simplify~\cite[Theorem~4.4]{pavlovtroitskii:branchedcoverings}.
  By Theorem~\ref{thm:dualobjects}, $C_0(Y) \oplus E \simeq C_0(X)^n$ for some $n \in \mathbb{N}$ and $E \in \cat{FHilb}_{C_0(X)}$. 
  Hence $k_t = |\dim(C_0(Y)_t)|=|p^{-1}(t)| \leq n$ for all $t \in X$. 
  Because $t \mapsto k_t$ is a continuous function $X \to \mathbb{N}$ by Remark~\ref{rem:dimensioncontinuous}, 
  the subsets $X_k = \{ t \in X \mid k_t = k \}\subseteq X$ are closed and open for $k=1,\ldots,n$.
  That is, $X=X_1 \sqcup \cdots \sqcup X_n$ is a finite disjoint union of clopen subsets, on each of which the fibres of $p$ have the same cardinality.

  Now for $t \in X$, by Lemma~\ref{lem:closed} and~\cite[Lemma~2.2]{pavlovtroitskii:branchedcoverings}, 
  we can choose a neighbourhood $U \subseteq X$ over which $p^{-1}(U)$ is a disjoint union of open subsets $V_1,\ldots,V_k \subseteq Y$ that each contain a preimage of $t$. 
  By replacing $U$ by $\bigcap p(V_i)$, and intersecting $V_i$ with $\bigcap p^{-1}(p(V_j))$, we may assume that each $p \colon V_i \to U$ is surjective. But then, because all fibres have the same size, it cannot happen that one of the $V_i$ has two points of a fibre, as then another $V_j$ must have none (because there are only finitely many points in the fibre), whence $p \colon V_j \to U$ would not be surjective. So each $p \colon V_i \to U$ is a closed and open bijection, and hence a homeomorphism.
\end{proof}

This completely characterises commutative specialisable dagger Frobenius structures in $\cat{Hilb}_{C_0(X)}$ for paracompact connected $X$.
Write $\cat{cFrob}_{C_0(X)}$ for the full subcategory of nondegenerate commutative objects in $\cat{Frob}_{C_0(X)}$, and write $\cat{Covering}_X$ for the category of finite coverings and bundle maps.
The category $\cat{Covering}_X$ is symmetric monoidal under Cartesian product.

\begin{theorem}\label{thm:commutativecase}
  For any paracompact locally compact Hausdorff space $X$ there is an equivalence $\cat{cFrob}_{C_0(X)} \simeq \cat{Covering}_X$ of symmetric monoidal dagger categories.
\end{theorem}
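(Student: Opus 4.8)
The plan is to realise the equivalence through the concrete assignment $(p\colon Y\twoheadrightarrow X)\mapsto C_0(Y)$ of Lemma~\ref{lem:coveringfrobenius}, and to read off the morphisms, the monoidal product, and the dagger from Gelfand duality. Essential surjectivity is in all but name already proved: reducing specialisable structures to special ones by Proposition~\ref{prop:normal}, Proposition~\ref{prop:covering} shows that every object of $\cat{cFrob}_{C_0(X)}$ is isomorphic to $C_0(Y)$ for a finite covering $p\colon Y\twoheadrightarrow X$, while Lemma~\ref{lem:coveringfrobenius} shows that each finite covering yields such an object. So the objects already match, and the remaining task is to match the morphisms and the symmetric monoidal dagger structure.

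For the morphisms I would argue fibrewise through Gelfand duality. A morphism in $\cat{cFrob}_{C_0(X)}$ is a $C_0(X)$-linear $*$-homomorphism $f\colon C_0(Y)\to C_0(Y')$; since both modules are finitely presented projective by Theorem~\ref{thm:dualobjects}, every bounded $C_0(X)$-linear map is automatically adjointable by Lemma~\ref{lem:finitelypresentedmapsadjointable}, so there is no extra adjointability condition to verify. As an algebra $*$-homomorphism over $C_0(X)$, such an $f$ is, by Gelfand duality, of the form $g\mapsto g\circ\phi$ for a unique continuous $\phi\colon Y'\to Y$ with $p\circ\phi=p'$, that is, a bundle map of coverings; conversely each bundle map induces such a $*$-homomorphism by pullback. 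I would then check that this assignment is a bijection on hom-sets and functorial, with continuity, $C_0(X)$-linearity and vanishing at infinity transferring without difficulty because $p$ and $p'$ are finite coverings. Gelfand duality reverses direction, so what this produces directly is an equivalence with $\cat{Covering}_X^{\mathrm{op}}$; the passage to $\cat{Covering}_X$ as stated must be made using the self-dual (dagger compact) structure that both categories carry, under which each $C_0(Y)$ is its own dagger dual with cup $\mu^\dagger\circ\eta$ and the swap symmetry is matched by the flip of $Y\times_X Y$.

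The monoidal comparison is the pleasant part: the Cartesian product in $\cat{Covering}_X$ is the fibre product $Y\times_X Y'$, and the identification $C_0(Y\times_X Y')\simeq C_0(Y)\otimes_{C_0(X)}C_0(Y')$ already used in the proof of Lemma~\ref{lem:coveringfrobenius} exhibits this as the tensor product of the corresponding Frobenius structures; the tensor unit $C_0(X)$ corresponds to the trivial covering $\id[X]$, and the symmetry of $\otimes$ to the flip of the fibre product. Coherence and the compatibility $(f\otimes g)^\dagger=f^\dagger\otimes g^\dagger$ are inherited from Proposition~\ref{prop:monoidal} and Theorem~\ref{thm:takahashi:algebras}, while the dagger comparison amounts to noting that the counit $\eta^\dagger$ and comultiplication $\mu^\dagger$ — the fibrewise averaging and the diagonal map made explicit via Lemma~\ref{lem:diagonalclopen} in Example~\ref{ex:CYoverCX} — are intrinsic and hence transported by $\Gamma_0$, which preserves daggers by Theorem~\ref{thm:takahashi}. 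The main obstacle is therefore neither the existence of objects nor the monoidal bookkeeping but the morphism correspondence together with its dagger compatibility: one must show that $*$-homomorphisms match bundle maps \emph{exactly} and reconcile the contravariance of Gelfand duality with the stated direction through the dagger compact structure, so that the resulting functor is genuinely a \emph{dagger} equivalence and not merely a symmetric monoidal one.
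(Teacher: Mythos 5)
Your proposal follows essentially the same route as the paper, whose proof is just two sentences: combine Proposition~\ref{prop:covering} (via Lemma~\ref{lem:open}) with Lemma~\ref{lem:coveringfrobenius} for the object correspondence, and use Gelfand duality $C_0(Y)\otimes_{C_0(X)}C_0(Y')\simeq C_0(Y\times_X Y')$ for monoidality. Your additional attention to the morphism correspondence and to reconciling the contravariance of Gelfand duality with the stated direction via the dagger is a genuine refinement of detail rather than a different approach, and it correctly identifies the point the paper's terse proof leaves implicit.
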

\begin{proof}
  Combine Lemma~\ref{lem:open} and Lemma~\ref{lem:coveringfrobenius} to establish the equivalence. Monoidality follows because the tensor product is the coproduct of commutative C*-algebras, and so $C_0(X) \otimes C_0(Y) \simeq C_0(X) + C_0(Y) \simeq C_0(X \times Y)$ by duality.
\end{proof}

Alternatively, we could include degenerate objects in $\cat{cFrob}_{C_0(X)}$ and objects $p$ in $\cat{Covering}_X$ to be non-surjective.

\section{Transitivity}\label{sec:transitivity}

In this section we reduce the study of special dagger Frobenius structures to the study of central ones and commutative ones, by proving a transitivity theorem that adapts~\cite[Theorem II.3.8]{demeyeringraham:separable} to the setting of dagger Frobenius structures. We start with combining Frobenius structures $E$ over $Z$ and $Z$ over $C$ into a Frobenius structure $E$ over $C$.

\begin{lemma}\label{lem:transitivity}
  Let $C$ and $Z$ be commutative C*-algebras with paracompact spectrum.
  If $E$ is a nondegenerate (specialisable) dagger Frobenius structure in $\cat{Hilb}_Z$, and $Z$ is a nondegenerate (specialisable) dagger Frobenius structure in $\cat{Hilb}_C$, 
  then $E$ is a nondegenerate (specialisable) dagger Frobenius structure in $\cat{Hilb}_C$.
\end{lemma}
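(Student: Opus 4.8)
The plan is to pass to the fibrewise picture, where transitivity collapses to the elementary observation that a finite direct sum of finite-dimensional C*-algebras is again a finite-dimensional C*-algebra. By Proposition~\ref{prop:normal} I would first reduce to the special case, replacing $E$ and $Z$ by isomorphic special structures through their specialisers; since these specialisers are central isomorphisms, it suffices to build a special structure on $E$ over $C$ and then recover the specialisable statement by tracking them. Write $X=\Spec C$. As $Z$ is a commutative nondegenerate special dagger Frobenius structure in $\cat{Hilb}_C$, Proposition~\ref{prop:covering} presents it as $Z\simeq C_0(W)$ for a finite covering $p\colon W\twoheadrightarrow X$, with unit $\eta_Z=(-\circ p)$ and counit $\eta_Z^\dag$ the fibrewise averaging map of Lemma~\ref{lem:coveringfrobenius}. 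As $E$ is a nondegenerate special dagger Frobenius structure in $\cat{Hilb}_Z=\cat{Hilb}_{C_0(W)}$, Theorem~\ref{thm:takahashi:algebras} presents it as $\Gamma_0$ of a finite C*-bundle $q\colon F\twoheadrightarrow W$ all of whose fibres are nonzero.

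I would then equip $E$ with its candidate $C$-structure by restriction of scalars along $\eta_Z\colon C\to Z$, giving the composite inner product $\inprod{x}{y}_C=\eta_Z^\dag\inprod{x}{y}_E$, the multiplication $\mu=\mu_E\circ\pi$ where $\pi\colon E\otimes_C E\twoheadrightarrow E\otimes_Z E$ is the canonical quotient, and the unit $\eta=\eta_E\circ\eta_Z\colon C\to Z\to E$. As a $C$-module this $E$ is the pushforward of $F$ along the covering, whose fibre over $t\in X$ is $\bigoplus_{w\in p^{-1}(t)}F_w$; because $p$ is a covering with bounded fibre cardinality and $\dim F$ is bounded, this pushforward is again a finite Hilbert bundle, so $E$ is finitely presented projective over $C$. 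Consequently $\mu$ and $\eta$, being bounded, are automatically adjointable by Lemma~\ref{lem:finitelypresentedmapsadjointable}, and the data live in $\cat{Hilb}_C$.

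The crux is the verification of the Frobenius axioms, which I would carry out fibrewise, using that an equation of morphisms in $\cat{Hilb}_{C_0(X)}$ holds as soon as it holds in every localisation $\Loc_t$ (these being monoidal dagger functors by Lemma~\ref{lem:localization}). The key identification is that $\Loc_t(E)$, as a Hilbert space, is the orthogonal direct sum $\bigoplus_{w\in p^{-1}(t)}F_w$: unfolding $\inprod{x}{y}_{\Loc_t(E)}=\ev_t\,\eta_Z^\dag\inprod{x}{y}_E=\tfrac1{k_t}\sum_{w\in p^{-1}(t)}\inprod{\Loc_w x}{\Loc_w y}_{F_w}$ via the averaging formula of Lemma~\ref{lem:coveringfrobenius} shows exactly this, up to the positive scaling that rescales each summand. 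Since $\mu_E$ is $Z$-bilinear it does not mix distinct sheets $w$, so $\mu$ localises to the product of the finite-dimensional C*-algebras $F_w$ and $\eta$ to their tuple of units; hence $\Loc_t(E)$ is the finite-dimensional C*-algebra $\bigoplus_w F_w$, a special dagger Frobenius structure in $\cat{Hilb}$ by Remark~\ref{rem:frobinhilbiscstar}. As these equations then hold globally, $(E,\mu,\eta)$ is a special dagger Frobenius structure in $\cat{Hilb}_C$, and it is nondegenerate because every fibre $\bigoplus_w F_w$ is nonzero. I expect the main obstacle to be precisely this fibrewise identification: correctly threading the averaging counit $\eta_Z^\dag$ through the $C$-localisation so that the two inner products and the two tensor products $\otimes_C$ and $\otimes_Z$ are reconciled, and in particular so that the $C$-adjoint $\mu^\dag$ really localises to the Frobenius comultiplication of $\bigoplus_w F_w$; once this is set up, the remaining finite-dimensional computation is routine, and the bare (non-specialisable) reading follows by the same argument, since the localisation functors preserve and jointly reflect all of the equational Frobenius axioms.
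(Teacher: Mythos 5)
Your proposal is correct and follows essentially the same route as the paper: both pass through Theorem~\ref{thm:takahashi:algebras} to present $E$ as a finite C*-bundle over $\Spec(Z)$ and through the commutative characterisation (Theorem~\ref{thm:commutativecase}/Proposition~\ref{prop:covering}) to present $Z$ as a finite covering of $X$, and both rest on the observation that the composite bundle has fibres $\bigoplus_{w \in p^{-1}(t)} F_w$, finite direct sums of finite-dimensional C*-algebras. The only difference is one of emphasis: the paper stays entirely in the bundle category and just checks local triviality of the composite, letting the equivalence supply the Frobenius structure for free, whereas you reconstruct the structure maps explicitly on the module side and verify the axioms by localisation --- more work, but the same argument.
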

\begin{proof}
  By Theorem~\ref{thm:takahashi:algebras}, there is a finite C*-bundle $p \colon E \twoheadrightarrow \Spec(Z)$, and a commutative finite C*-bundle $Z \twoheadrightarrow X=\Spec(C)$. By Theorem~\ref{thm:commutativecase}, the latter corresponds to a branched covering $q \colon \Spec(Z) \to X$. We will show that $r=q \circ p$ is a finite C*-bundle $E \twoheadrightarrow X$.
  First of all, the fibre of $r$ over $t \in X$ is $r^{-1}(t) = \bigoplus_{u \in q^{-1}(t)} p^{-1}(u)$, a finite direct sum of finite-dimensional C*-algebras, and hence a finite-dimensional C*-algebra.
  Now let $t_0 \in X$. Say $q^{-1}(t_0) = \{u_1,\ldots,u_n\} \in \Spec(Z)$.
  Pick open neighbourhoods $U_i \subseteq \Spec(Z)$ of $u_i$, finite-dimensional C*-algebras $A_i$, and homeomorphisms $\varphi_i \colon U_i \times A_i \to p^{-1}(U_i)$, such that $\varphi_i(u,-)\colon A_i \to p^{-1}(U_i)$ is a $*$-isomorphism for each $u \in U_i$.
  Because $q$ is a branched covering, we may assume the $U_i$ disjoint.
  Set $V=\bigcap_{i=1}^n q(V)$; this is an open neighbourhood of $t_0$ in $X$ because $q$ is open.
  Set $A=\bigoplus_{i=1}^n A_i$.
  Define $\varphi \colon V \times A \to r^{-1}(t_0) = \bigoplus_{i=1}^n p^{-1}(U_i)$ by
  \[
    \varphi(t,a) = \big( \varphi_1(u_1,a_1), \ldots, \varphi_n(u_n,a_n) \big)
  \]
  where $a=(a_1,\ldots,a_n)$, and $t=q(u_i)$ for $u_i \in U_i$.
  Then, for each $t \in V$, say $t=q(u_i)$ with $u_i \in U_i$, the function $$\varphi(t,-) = \bigoplus_{i=1}^n \varphi_i(u_i,(-)_i) \colon A = \bigoplus_{i=1}^n A_i \to \bigoplus_{i=1}^n p^{-1}(U_i) = r^{-1}(t)$$ is a $*$-isomorphism. 
  It is clear that $r$ is nondegenerate when $p$ and $q$ are, and that $r$ is specialisable when $p$ and $q$ are.
\end{proof}

The rest of this section considers the converse: if $E$ is a Frobenius structure over $C$, does it decompose into Frobenius structures $E$ over $Z$ and $Z$ over $C$? 
We start with the first step: $E$ over $Z$. Our proof below will use the following algebraic lemma.

\begin{lemma}\label{lem:hattori}
  If $(E,\mu,\eta)$ is a specialisable dagger Frobenius structure in $\cat{Hilb}_C$, then $E = Z(E) \oplus [E,E]$ is a dagger biproduct of Hilbert modules, where $[E,E]$ is the $C$-linear span of $\{xy-yx \mid x,y \in E\}$
\end{lemma}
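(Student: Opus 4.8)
The plan is to pass to the bundle picture of Theorem~\ref{thm:takahashi:algebras} and decompose fibrewise. First I would use Proposition~\ref{prop:normal} to replace the given specialisable structure by an isomorphic special one $(E,\mu,\eta)$. Since the specialiser $d$ is a central isomorphism, one checks that the condition $\mu(x\otimes y)=\mu(y\otimes x)$ defining $Z(E)$ is unchanged, and that $d$ maps commutators to commutators: from centrality, $d(xy)=d(x)y=xd(y)$, so $d(xy-yx)=d(x)y-d(y)x=xd(y)-d(y)x=[x,d(y)]$, and applying the same to the central isomorphism $d^{-1}$ gives $d([E,E])=[E,E]$. Hence neither $Z(E)$ nor $[E,E]$ is affected by passing to the special structure. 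Now $E$ is a C*-algebra by Lemma~\ref{lem:frobeniuscstar}, and by Theorem~\ref{thm:takahashi:algebras} it is $\Gamma_0(p)$ for a finite C*-bundle $p\colon \mathcal{E}\twoheadrightarrow X$ with finite-dimensional C*-algebra fibres $\mathcal{E}_t$. Its counit $\eta^\dag\colon E\to C$ is a faithful trace $\phi$ (specialisable structures are symmetric), and the module inner product is $\inprod{a}{b}=\phi(a^*b)$.

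Second, I would establish orthogonality $Z(E)\perp[E,E]$ directly. The center $Z(E)$ is a $*$-subalgebra, so for $z\in Z(E)$ and a commutator $xy-yx$ one computes $\inprod{z}{xy-yx}=\phi(z^*xy)-\phi(z^*yx)=\phi(z^*xy)-\phi(xz^*y)=0$, using that $\phi$ is a trace and that $z^*$ is central. Thus $[E,E]$ is contained in the orthogonal complement of $Z(E)$.

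Third, for spanning and for continuity of the projections I would work in $\cat{FHilbBundle}_X$. A local trivialization $\varphi\colon U\times A\to\mathcal{E}_U$ that is fibrewise a $*$-isomorphism carries $Z(A)$ and $[A,A]$ to $Z(\mathcal{E}_t)$ and $[\mathcal{E}_t,\mathcal{E}_t]$; since the isomorphism type of the fibre algebra is locally constant, these assemble into locally trivial finite Hilbert sub-bundles $Z(\mathcal{E})$ and $[\mathcal{E},\mathcal{E}]$. The dimension count ($\dim Z(\mathcal{E}_t)$ equals the number of blocks and $\dim[\mathcal{E}_t,\mathcal{E}_t]=\dim\mathcal{E}_t$ minus the number of blocks), together with the orthogonality above, shows $\mathcal{E}_t=Z(\mathcal{E}_t)\oplus[\mathcal{E}_t,\mathcal{E}_t]$ is an orthogonal direct sum; the fibrewise orthogonal projection is continuous because in each chart it is conjugate to the fixed projection of $A$. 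Hence $\mathcal{E}=Z(\mathcal{E})\oplus[\mathcal{E},\mathcal{E}]$ is a dagger biproduct in $\cat{FHilbBundle}_X$, and applying the dagger equivalence $\Gamma_0$ of Theorem~\ref{thm:sectionsvanishingatinfinity}, which preserves dagger biproducts, yields $E\cong\Gamma_0(Z(\mathcal{E}))\oplus\Gamma_0([\mathcal{E},\mathcal{E}])$.

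Finally I would identify the summands. The identity $\Gamma_0(Z(\mathcal{E}))=Z(E)$ follows from monoidal well-pointedness (Lemma~\ref{lem:wellpointed}) and the fact, from the proof of Theorem~\ref{thm:sectionsvanishingatinfinity}, that global sections exhaust every fibre: $x\in Z(E)$ iff $x(t)\in Z(\mathcal{E}_t)$ for all $t$. The main obstacle is the companion identity $\Gamma_0([\mathcal{E},\mathcal{E}])=[E,E]$, since $[E,E]$ is only the algebraic $C$-linear span of commutators, not its closure. The inclusion $[E,E]\subseteq\Gamma_0([\mathcal{E},\mathcal{E}])$ is immediate from the second step. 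For the reverse inclusion I would, in each trivializing chart, write $[A,A]$ as the span of finitely many commutators $[a_\ell,b_\ell]$, promote these to local sections $\sigma_\ell(t)=\varphi(t,a_\ell)$ and $\tau_\ell(t)=\varphi(t,b_\ell)$ cut off by Urysohn bump functions $\rho$, and note that $[\rho\sigma_\ell,\rho\tau_\ell]=\rho^2[\sigma_\ell,\tau_\ell]$ is a genuine global commutator section; by paracompactness a locally finite cover with a subordinate partition of unity produces finitely many such commutator sections spanning every fibre of $[\mathcal{E},\mathcal{E}]$. As $\Gamma_0([\mathcal{E},\mathcal{E}])$ is finitely presented projective (Theorem~\ref{thm:dualobjects}), any finite family of sections spanning every fibre generates it as a $C$-module, so these commutators already generate it, giving $[E,E]=\Gamma_0([\mathcal{E},\mathcal{E}])$. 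This finite-generation step, together with the continuity bookkeeping in the charts, is where the real work lies.
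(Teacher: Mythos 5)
Your route is genuinely different from the paper's: the paper gets the algebraic decomposition $E=Z(E)\oplus[E,E]$ in one stroke from strong separability and Hattori's theorem (the separability idempotent $\mu^\dag\circ\eta$ gives, for each $a\in E$, an \emph{explicit finite} expression of $a$ minus its central part as a sum of commutators), and then only has to check orthogonality, which it does exactly as you do, via the H*-/trace property. Your reduction to the special case, your orthogonality computation, and the fibrewise identification $\mathcal{E}_t=Z(\mathcal{E}_t)\oplus[\mathcal{E}_t,\mathcal{E}_t]$ are all fine.

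The gap is in the last step, where you must show that the \emph{algebraic} span $[E,E]$ exhausts $\Gamma_0([\mathcal{E},\mathcal{E}])$. Two claims there fail for non-compact $X$. First, paracompactness gives a locally finite, not finite, cover; patching with a partition of unity over infinitely many charts expresses a section of $[\mathcal{E},\mathcal{E}]$ only as an \emph{infinite} sum of commutators, i.e.\ as an element of the closed span. Second, the rescue you invoke --- ``any finite family of sections spanning every fibre generates a finitely presented projective module as a $C$-module'' --- is false when $C=C_0(X)$ is non-unital: take $X=\mathbb{N}$, $F=c_0$ viewed as a Hilbert module over itself (certainly finitely presented projective), and $v=(1/n)_n$; then $v(n)\neq 0$ spans every fibre, but $c_0\cdot v$ misses, say, $(1/\log(n+2))_n$, since the required coefficients $n/\log(n+2)$ are unbounded. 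The underlying issue is that fibrewise surjectivity of $C_0(X)^m\to F$ gives invertibility of $\phi\phi^\dag$ on each fibre but not a uniform lower bound, so $\phi$ need not be surjective; your argument goes through verbatim only when $X$ is compact. To close the gap over a general (paracompact, locally compact) base you need the mechanism the paper borrows from Hattori: the separability idempotent produces, for every $a$, a \emph{finite} commutator expression for $a-\omega(a)$ uniformly in $a$, with no appeal to fibrewise spanning or to finite generation of the module.
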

\begin{proof}
  Adapting~\cite{aguiar:stronglyseparable} to monoidal categories, together with the fact that specialisable Frobenius structure are symmetric, shows that $(E,\mu,\eta)$ is strongly separable~\cite{demeyeringraham:separable}. 
  By~\cite[Theorem~1]{hattori:stronglyseparable}, there is a direct sum $E \simeq Z(E) \oplus [E,E]$ of $C$-modules.
  It now suffices to prove that this direct sum is orthogonal, as it then follows that both summands are Hilbert modules~\cite[Section~15.3]{weggeolsen:ktheory}.
  But if $z \in Z(E)$ and $x,y \in E$, then
  \[
    \inprod{z}{xy-yx} = \inprod{z}{xy} - \inprod{z}{yx} = \inprod{z y^*}{x} - \inprod{y^*z}{x} = 0,
  \]
  where the second equation uses that dagger Frobenius structures are H*-algebras; see~\cite[Lemma~5]{abramskyheunen:hstar}, which does not depend on commutativity. 
\end{proof}

It follows that the projection $p_1 \colon E \to Z(E)$ is cyclic: $p_1(xy)=p_1(yx)$.
It also follows that if $E$ is a specialisable dagger Frobenius structures, its centre $Z(E)$ is a well-defined Hilbert module.
We leave open the question whether special(isable) dagger Frobenius structures in arbitrary monoidal dagger categories correspond to monoid-comonoid pairs $E$ with $E \simeq Z(E) \oplus F$ a dagger biproduct, where $Z(E)$ is defined by an equaliser. 

Let us consider what the centre and commutator looks like in the paradigmatic example.

\begin{example}\label{ex:hattori}
  Consider the special dagger Frobenius structure $E=\M_n$ in $\cat{Hilb}$.
  Then $Z(E) = \C$, and $[E,E]=\{y \in \M_n \mid \tr(y)=0\}$ (see~\cite{albertmuckenhoupt:tracezero}) and indeed
  \[\begin{tikzpicture}
    \node (Z) at (-2,0) {$Z(E)$};
    \node (E) at (0,0) {$E$};
    \node (EE) at (2,0) {$[E,E]$};
    \draw[->] ([yshift=1mm]Z.east) to node[above]{$i_1$} ([yshift=1mm]E.west);
    \draw[->] ([yshift=-1mm]E.east) to node[below]{$p_2$} ([yshift=-1mm]EE.west);
    \draw[<-] ([yshift=-1mm]Z.east) to node[below]{$p_1$} ([yshift=-1mm]E.west);
    \draw[<-] ([yshift=1mm]E.east) to node[above]{$i_2$} ([yshift=1mm]EE.west);
  \end{tikzpicture}\]
  forms a dagger biproduct, where $i_1(1)=\tfrac{1}{\sqrt{n}}$, $i_2(y)=y$, $p_1(x)=\tfrac{1}{\sqrt{n}}\tr(x)$, and $p_2(x) = x-\tfrac{1}{n}x$:
  \begin{align*}
    \inprod{i_1(1)}{x} &= \inprod{\tfrac{1}{\sqrt{n}}}{x} = \tfrac{1}{\sqrt{n}}\tr(x) = \inprod{1}{p_1(x)}\text{,} \\
    \inprod{i_2(y)}{x} &= \inprod{y}{x} = \inprod{y}{x} - \inprod{y}{\tfrac{1}{n}\tr(x)} = \inprod{y}{p_2(x)}\text{,} \\
    p_1 \circ i_1 (1) &= p_1(\tfrac{1}{\sqrt{n}}) = \tfrac{1}{n} \tr(1) = 1\text{,} \\
    p_2 \circ i_2 (y) &= p_2(y) = y - \tfrac{1}{n}\tr(y) = y\text{,} \\
    i_1 \circ p_1 + i_2 \circ p_2 (x) &= i_1(\tfrac{1}{\sqrt{n}}\tr(x)) + i_2(x-\tfrac{1}{n}\tr(x)) = \tfrac{1}{n}\tr(x) + x - \tfrac{1}{n}\tr(x) = x\text{.}
  \end{align*}
\end{example}

Any special dagger Frobenius structure $E$ in $\cat{Hilb}_{C_0(X)}$ is a C*-algebra according to Lemma~\ref{lem:frobeniuscstar}. Therefore so is $Z(E)$, and it makes sense to talk about the monoidal category $\cat{Hilb}_{Z(E)}$.

The following two lemmas finish the proof of step one: if $E$ if Frobenius over $C$, then so is $E$ over $Z$. 

\begin{lemma}\label{lem:frobinCisfrobinZ}
  If $E$ is a special dagger Frobenius structure in $\cat{Hilb}_{C_0(X)}$, then it is also an object in $\cat{Hilb}_{Z(E)}$.
\end{lemma}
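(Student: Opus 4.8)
The plan is to equip $E$ with a $Z(E)$-valued inner product built from the cyclic projection of Lemma~\ref{lem:hattori}, and then to verify the Hilbert-module axioms, deferring positivity and completeness to a fibrewise argument. By Lemma~\ref{lem:frobeniuscstar} the structure $E$ is a C*-algebra, and by the remark following Lemma~\ref{lem:hattori} its centre $Z=Z(E)$ is a well-defined commutative sub-C*-algebra. Moreover $E$ carries a right $Z$-action by multiplication, which is unambiguous since elements of $Z$ are central. The only thing left to produce is a $Z$-valued inner product making $E$ a Hilbert $Z$-module.

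I would define $\inprod{x}{y}_Z := p_1(x^*y)$, where $p_1 \colon E \to Z$ is the cyclic projection of Lemma~\ref{lem:hattori}. The algebraic axioms follow from two facts: $[E,E]$ is closed under the involution (since $(xy-yx)^*=y^*x^*-x^*y^*$) and is a $Z$-submodule (since $z$ central gives $(xy-yx)z = x(yz)-(yz)x$). These make $p_1$ both $*$-preserving and $Z$-linear, from which conjugate-symmetry $\inprod{x}{y}_Z^* = p_1(x^*y)^* = p_1(y^*x) = \inprod{y}{x}_Z$ and right $Z$-linearity $\inprod{x}{yz}_Z = p_1(x^*y)z = \inprod{x}{y}_Z z$ are immediate.

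For positivity, definiteness, and completeness I would pass to the finite C*-bundle picture of Theorem~\ref{thm:takahashi:algebras}. Since $p_1$ is a morphism in $\cat{Hilb}_{C_0(X)}$ it localises fibrewise, and the centre $Z$ corresponds fibrewise to the centres $Z(E_t)$; on each fibre $E_t \simeq \bigoplus_k \M_{n_k}$ the orthogonal projection onto the centre along $[E_t,E_t]$ is, independently of the Frobenius normalisation (the block weights cancel), the blockwise normalised trace $a \mapsto (\tfrac{1}{n_k}\tr(a_k)\,1_{n_k})_k$, which is a positive, faithful conditional expectation. Hence $\inprod{x}{x}_Z = p_1(x^*x) \ge 0$, with equality only when $x=0$. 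For completeness I would compare the Hilbert-module norm $\|x\|_Z^2 = \|p_1(x^*x)\|$ with the C*-norm of $E$: fibrewise the inequalities $\|a_k\|_{\mathrm{op}}^2 \le \|a_k\|_{\mathrm{HS}}^2 \le n_k\|a_k\|_{\mathrm{op}}^2$ together with the uniform bound $n_k \le N$ on the fibre dimensions give $\tfrac1N\|x\|_E^2 \le \|x\|_Z^2 \le \|x\|_E^2$, so the two norms are uniformly equivalent and completeness of $E$ as a C*-algebra transfers to $\cat{Hilb}_Z$.

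The main obstacle is the positivity and faithfulness of $p_1$: these are not visible from the abstract decomposition $E = Z(E)\oplus[E,E]$ alone and really require identifying $p_1$ fibrewise with the centre-valued trace. The one point to check carefully there is that the orthogonal projection onto $Z(E_t)$ coincides with the algebraic projection along $[E_t,E_t]$, which holds because these summands are orthogonal, exactly the computation $\inprod{z}{xy-yx}=0$ recorded in the proof of Lemma~\ref{lem:hattori}.
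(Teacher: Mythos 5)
Your proof is correct and follows the same skeleton as the paper's: the inner product $\inprod{x}{y}_{Z(E)} = p_1(x^*y)$ built from the dagger biproduct of Lemma~\ref{lem:hattori}, with positivity, definiteness and completeness settled by passing to fibres. Two points of divergence are worth noting. First, the paper gets $*$-preservation, $Z(E)$-linearity and (complete) positivity of $p_1$ in one stroke from Tomiyama's theorem, since $p_1$ is a norm-one projection onto a C*-subalgebra and hence a conditional expectation; you instead verify the algebraic identities by hand (observing that $[E,E]$ is $*$-closed and a $Z(E)$-submodule) and extract positivity and faithfulness from the fibrewise identification of $p_1$ with the blockwise normalised trace. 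Both routes work; Tomiyama's is shorter, yours is more self-contained, though you should be explicit that $\Loc_t(Z(E)) = Z(E_t)$ (this is essentially Lemma~\ref{lem:centerfrobenius}; for faithfulness alone it suffices that $1_{E_t} \in \Loc_t(Z(E))$, which is immediate). Second, your completeness step is actually more careful than the paper's: the paper records only $\|p_1(x^*x)\| \le \|x^*x\|$, which shows the new norm is dominated by the C*-norm but not, by itself, that $\|\cdot\|_{Z(E)}$-Cauchy sequences converge; your two-sided estimate $\tfrac{1}{N}\|x\|_E^2 \le \|x\|_{Z(E)}^2 \le \|x\|_E^2$, using the uniform bound $N$ on fibre dimensions, is the inequality one really needs to transfer completeness from $E$.
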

\begin{proof}
  First of all, $E$ is certainly a $Z(E)$-module; let us verify that it is a Hilbert $Z(E)$-module.
  As the inner product, take $\inprod{x}{y} = p_1(x^* y)$, using the projection $p_1 \colon E \to Z(E)$ induced by Lemma~\ref{lem:hattori}, and the involution~\eqref{eq:involution}.
  By Lemma~\ref{lem:hattori}, $p_1$ has norm one, and hence is a conditional expectation~\cite{tomiyama:conditionalexpectation}. Thus the inclusion $p_1^\dag \colon Z(E) \to E$ is a $*$-homomorphism, and $p_1$ is completely positive. 

  Because (completely) positive maps preserve the involution~\cite[p2]{stormer:positive}, we have $\inprod{y}{x}^* = p_1(y^*x)^* = p_1(x^*y) = \inprod{x}{y}$ for $x,y \in E$. Because $p_1$ is $Z(E)$-linear, also $\inprod{x}{y+y'}=\inprod{x}{y}+\inprod{x}{y'}$ and $\inprod{x}{yz}=\inprod{x}{y}z$ for $x,y,y' \in E$ and $z \in Z(E)$. Hence the inner product is $Z(E)$-sesquilinear.

  Again because $p_1$ is (completely) positive, $\inprod{x}{x} \geq 0$ for any $x \in E$. 
  To see that the inner product is in fact positive definite, first consider the case where $X=1$ and $E=\M_n$. Then $p_1 \colon \M_n \to \C^n$ takes the diagonal of a matrix. So if $x \in \M_n$, and $p_1(x^*x)=0$, then $x=0$, so certainly $p_1(x)=0$.
  This generalises to finite-dimensional C*-algebras $E$.
  Next we use Proposition~\ref{prop:localization} to go back to the case of general $E$: if $x \in E$ satisfies $p_1(x^*x)=0$, then for all $t \in X$ we have $\Loc_t(p_1(x))=0$. 
  So, by Theorem~\ref{thm:sectionsvanishingatinfinity}, in fact $p_1(x)=0$.
  Thus $\inprod{-}{-}$ is a well-defined $Z(E)$-valued inner product on $E$.

  The inner product is complete because
  \[
    \|x\|^2_{Z(E)} = \| \inprod{x}{x} \|_{Z(E)} = \| p_1(x^*x) \|_{Z(E)} \leq \|x^*x \|_{C_0(X)} = \|x\|^2_{C_0(X)}
  \]
  by Lemma~\ref{lem:frobeniuscstar}.
  Hence $E$ is a well-defined Hilbert $Z(E)$-module.
\end{proof}

\begin{lemma}\label{lem:frobeniusovercentre}
  If $E$ is a special dagger Frobenius structure in $\cat{Hilb}_{C_0(X)}$, then it is also a special dagger Frobenius structure in $\cat{Hilb}_{Z(E)}$.
\end{lemma}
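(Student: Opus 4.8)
The plan is to avoid checking the Frobenius axioms by bare hand and instead transport the whole problem to the spectrum of the centre, where the bundle equivalence of Theorem~\ref{thm:takahashi:algebras} does the work. By Lemma~\ref{lem:frobeniuscstar} the structure $E$ is a C*-algebra, so its centre $Z(E)$ is a commutative C*-algebra, say $Z(E)=C_0(\tilde X)$ with $\tilde X$ locally compact Hausdorff. The central $C_0(X)$-action on $E$ factors through $Z(E)$, giving a continuous $q\colon\tilde X\to X$; since $Z(E)$ is the commutative finite C*-bundle formed by the centres of the fibres of $E\twoheadrightarrow X$, and this is locally trivial, $q$ is a finite covering, so $\tilde X$ is again paracompact locally compact Hausdorff. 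Throughout, $E$ carries the Hilbert $Z(E)$-module structure of Lemma~\ref{lem:frobinCisfrobinZ}, with inner product $\inprod{x}{y}=p_1(x^*y)$, multiplication $\mu_Z\colon E\otimes_{Z(E)}E\to E$ induced by the algebra product (well defined over $Z(E)$ precisely because $Z(E)$ is central), and unit $\eta_Z=p_1^\dag\colon Z(E)\to E$ the inclusion of the centre.

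First I would show that $E$ is a finite C*-bundle over $\tilde X$ whose fibres are the central-simple blocks of the fibres of $E\twoheadrightarrow X$. This is a local statement: over a neighbourhood $U\subseteq X$ on which $E$ trivialises as $U\times A$ with $A=\M_{n_1}\oplus\cdots\oplus\M_{n_k}$ a fixed finite-dimensional C*-algebra, the preimage $q^{-1}(U)$ splits into $k$ disjoint copies of $U$, one per block, and over the $i$-th copy the bundle restricts to $U\times\M_{n_i}$, a finite C*-bundle with central fibre $\M_{n_i}$. Glueing these charts exhibits a finite C*-bundle $\hat p$ over $\tilde X$ whose sections vanishing at infinity reassemble, block by block, into sections of $E\twoheadrightarrow X$; thus $\Gamma_0(\hat p)\simeq E$ as Hilbert $Z(E)$-modules by Theorem~\ref{thm:sectionsvanishingatinfinity} applied over $\tilde X$.

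With this bundle in hand, Theorem~\ref{thm:takahashi:algebras} applied to the paracompact space $\tilde X$ (so that $C_0(\tilde X)=Z(E)$) gives that $\Gamma_0(\hat p)=E$ is a special(isable) dagger Frobenius structure in $\cat{Hilb}_{Z(E)}$. I would then verify that the structure maps produced by the equivalence are the intended ones: the fibrewise (blockwise) C*-multiplication is exactly $\mu_Z$, and the fibrewise unit is $\eta_Z=p_1^\dag$. Associativity and the unit laws are then inherited from the C*-algebra structure of $E$, while the Frobenius law and speciality, being equations between morphisms in $\cat{Hilb}_{Z(E)}=\cat{Hilb}_{C_0(\tilde X)}$, may be checked after localising at each point of $\tilde X$, where $E$ becomes a full matrix algebra carrying its standard special dagger Frobenius structure.

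The main obstacle is the middle step: rigorously identifying $\Spec Z(E)$ with the space of central-simple blocks and promoting the pointwise block decomposition to an honest finite C*-bundle over $\tilde X$ whose section module reproduces $E$ together with the inner product $\inprod{x}{y}=p_1(x^*y)$. Hidden in this is a quantitative point, namely the normalisation of that inner product: one must confirm that the localisation of $p_1(x^*y)$ at a point of $\tilde X$ is precisely the inner product for which the matrix-algebra fibre is special, rescaling by the canonical central specialiser of Proposition~\ref{prop:normal} if the conditional-expectation normalisation does not land on the nose, so that the resulting structure is genuinely special rather than merely specialisable.
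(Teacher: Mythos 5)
You take a genuinely different route from the paper. The paper never leaves the algebraic side: it constructs the canonical quotient $q \colon E \otimes_{C_0(X)} E \to E \otimes_{Z(E)} E$, factors $\mu$ through it to get $\mu_Z$, and then proves by an explicit graphical computation --- using cyclicity of $p_1$ and the strong Frobenius law --- that $\mu_Z^\dag = q \circ \mu^\dag$ \emph{with respect to the inner product} $\inprod{x}{y} = p_1(x^*y)$; speciality, associativity, unitality and the Frobenius law over $Z(E)$ then all descend from the corresponding laws over $C_0(X)$ by factoring through $q$. Your topological picture ($\Spec Z(E)$ as a finite covering of $X$, with $E$ decomposing blockwise into a finite C*-bundle over it) is sound and not circular --- it is essentially how the paper handles the other two ingredients of transitivity (Lemmas~\ref{lem:centerfrobenius} and~\ref{lem:transitivity}) --- so invoking Theorem~\ref{thm:takahashi:algebras} over $\tilde X$ is legitimate in principle.

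The genuine gap sits exactly where you park it as a ``quantitative point,'' and it is not a side issue but the entire content of the lemma. Theorem~\ref{thm:takahashi:algebras} only tells you that $\Gamma_0(\hat p)$ carries \emph{some} special(isable) dagger Frobenius structure over $C_0(\tilde X)$, for the Hilbert-module structure coming from the bundle (fibrewise unnormalised trace on each block, as in Lemma~\ref{lem:cstarbundleishilbertbundle}). The statement to be proved concerns the \emph{specific} data: the inner product $p_1(x^*y)$, the multiplication $\mu_Z$ induced by $\mu$, and the unit $p_1^\dag$, and it asserts speciality on the nose. These two inner products do not obviously agree: on a block $\M_{n_i}$ the conditional expectation $p_1$ restricts to the normalised trace, so $p_1(x^*y)$ localises to $\tfrac{1}{n_i}\tr(x^*y)$, whereas the inner product for which $(\M_{n_i},\cdot)$ is \emph{special} is $n_i\tr(x^*y)$; rescaling the inner product by a central factor rescales $\mu_Z^\dag$ and hence $\mu_Z \circ \mu_Z^\dag$ by the inverse of that factor. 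Your proposed repair --- absorb the discrepancy into the canonical specialiser of Proposition~\ref{prop:normal} --- produces an \emph{isomorphic} special structure with a different multiplication and unit; it does not prove that the given $(\mu_Z, p_1^\dag)$ is special for the given inner product, which is what the lemma says. What your argument is missing a substitute for is precisely the paper's key identity $\mu_Z^\dag = q \circ \mu^\dag$, which ties the $Z(E)$-valued inner product to the original special structure via the dagger-biproduct projection of Lemma~\ref{lem:hattori}. To salvage your approach you would have to compute the localisation of $p_1(x^* y)$ at each point of $\tilde X$ in terms of the original special inner product on $E_t$ and verify that the fibrewise adjoint of $\mu_Z$ is the special comultiplication --- which is the same computation the paper performs globally, so nothing is actually saved.
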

\begin{proof}
  Write $C$ for $C_0(X)$ and $Z$ for Z(E).
  By definition, the tensor product of $E$ with itself in $\cat{Hilb}_{C}$, denoted $E \otimes_{C} E$, is the completion of the algebraic tensor product $E \odot_{C} E$ in the $C$-valued inner product $\inprod{x_1 \otimes y_1}{x_2 \otimes y_2} = \inprod{x_1}{x_2}\inprod{y_1}{y_2}$. Similarly, $E \otimes_{Z} E$ is the completion of $E \odot_{Z} E$ in the $Z$-valued inner product $\inprod{x_1 \otimes y_1}{x_2 \otimes y_2} = p_1(x_1^*x_2)p_1(y_1^*y_2)$.
  The assignment $x \otimes y \mapsto x \otimes y$ extends to a canonical map $q \colon E \otimes_{C} E \to E \otimes_{Z} E$, because if $x_i \in E \odot_{C} E$ converges in the former inner product, then it does so in the latter inner product too: 
  \[
    \| \inprod{x_i}{x_i}_E \|_{C}
    = \| x_i^* x_i \|_{C}
    \geq \| p_1(x_i^* x_i) \|_{Z}.
  \]
  Here, the equality uses that~\eqref{eq:involution} is a C*-involution locally as in Proposition~\ref{prop:localization}, and the inequality uses that $p_1$ has norm one.
  Because the multiplication $\mu$ is in fact $Z$-bilinear, it factors through $q$. This gives a map $\mu_Z$ that makes the following diagram of modules commute. 
  \[
    \begin{tikzpicture}[xscale=2.5]
      \node (l) at (0,0) {$E$};
      \node (t) at (1,1) {$E \otimes_{C} E$};
      \node (b) at (1,-1) {$E \otimes_{Z} E$};
      \node (tr) at (2,1) {$E \odot_{C} E$};
      \node (br) at (2,-1) {$E \odot_{Z} E$};
      \draw[->>] (t) to node[right]{$q$} (b);
      \draw[>->] (tr) to (t);
      \draw[>->] (br) to (b);
      \draw[->>] (tr) to (br);
      \draw[->] ([yshift=.5mm]t.south west) to node[above]{$\mu$} ([yshift=.5mm]l.north east);
      \draw[->, dashed] (b.north west) to node[below]{$\mu_Z$} (l.south east);
    \end{tikzpicture}
    \]
    Because $\mu^\dag(zx) = \mu^\dag \circ \mu (z \otimes x) = \mu_Z(z \otimes \mu^\dag(x)) = z \mu^\dag(x)$ by the Frobenius law and similarly $\mu^\dag(xz)=\mu^\dag(x)z$, the map $\mu^\dag \colon E \to E \otimes_{C} E$ is a morphism of $Z$-$Z$-bimodules. By construction $q$ is a map of $Z$-$Z$-bimodules. Hence $\mu_Z^\dag = q \circ \mu^\dag \colon E \to E \otimes_{Z} E$ is $Z$-linear.
    Now $\inprod{x \otimes y}{\mu_Z^\dag(w)}$ is computed as follows:
    \begin{align*}
      & \mu_Z \circ (p_1 \otimes_{C} p_1) \circ (\mu \otimes_C \mu) \circ (\id \otimes_C \sigma \otimes_C \id) \circ (\id \otimes_C \mu^\dag)(x^* \otimes y^* \otimes w) \\
      =\, & \mu_Z \circ (p_1 \otimes_C p_1) \circ (\id \otimes_C \mu) \circ (\sigma \otimes_C \id)\circ (\id \otimes_C \mu^\dag)  
      \circ (\id \otimes_C \mu)(x^* \otimes y^* \otimes w) \\
      =\, & \mu_Z \circ (p_1 \otimes_C \id) \circ (\mu \otimes_C p_1) \circ (\id \otimes_C \mu^\dag) \circ (\id \otimes_C \mu)(y^* \otimes w \otimes x^*) \\
      =\, & \mu_Z \circ \mu_Z^\dag \circ p_1 \circ \mu \circ (\id \otimes_C \mu)(y^* \otimes w \otimes x^*) \\
      =\, & \mu_Z \circ \mu_Z^\dag \circ p_1 \circ \mu \circ (\id\otimes_C \mu)(x^* \otimes w \otimes y^*)\text.
    \end{align*}
    This is perhaps easier to read graphically:
    \[
      \begin{pic}
        \node[dot] (b) at (1,0) {};
        \node[dot] (l) at (0,1) {};
        \node[dot] (r) at (1,1) {};
        \node[dot] (t) at (.5,1.5) {};
        \draw (b) to +(0,-.5) node[below]{$w\vphantom{^*}$};
        \draw[dashed] (l) to[out=90,in=180] (t);
        \draw[dashed] (t) to +(0,.5);
        \draw[dashed] (r) to[out=90,in=0] (t);
        \draw (b) to[out=0,in=0] (r);
        \draw (l) to[out=0,in=180] (b);
        \draw (l) to[out=180,in=90] (-.5,-.5) node[below]{$x^*$};
        \draw (r) to[out=180,in=90] (.25,-.5) node[below]{$y^*$};
      \end{pic}
      =
      \begin{pic}
        \node[dot] (b) at (0,0) {};
        \node[dot] (m) at (0,.5) {};
        \node[dot] (t) at (0,1) {};
        \node[dot] (z) at (-.25,1.5) {};
        \draw[dashed] (z) to +(0,.5);
        \draw[dashed] (z) to[out=0,in=90] (t);
        \draw[dashed] (z) to[out=180,in=180] (m);
        \draw (m) to[out=0,in=0] (t);
        \draw (b) to (m);
        \draw (b) to[out=0,in=90] +(.5,-.5) node[below]{$w\vphantom{^*}$};
        \draw (t) to[out=180,in=90] +(-.5,-1.5) node[below]{$x^*$};
        \draw (b) to[out=180,in=90] +(-1,-.5) node[below]{$y^*$};
      \end{pic}
      =
      \begin{pic}
        \node[dot] (b) at (0,.2) {};
        \node[dot] (m) at (0,.5) {};
        \node[dot] (t) at (-.5,1) {};
        \node[dot] (z) at (0,1.5) {};
        \draw[dashed] (z) to +(0,.5);
        \draw[dashed] (z) to[out=180,in=90] (t);
        \draw[dashed] (z) to[out=0,in=0] (m);
        \draw (t) to[out=0,in=180] (m);
        \draw (m) to (b);
        \draw (t) to[out=180,in=90] +(0,-1.5) node[below]{$y^*$};
        \draw (b) to[out=0,in=90] +(.2,-.1) to[out=-90,in=90] +(-1.2,-.6) node[below]{$x^*$};
        \draw (b) to[out=180,in=90] +(-.2,-.1) to[out=-90,in=90] +(.7,-.6) node[below]{$w\vphantom{^*}$};
      \end{pic}
      =
      \begin{pic}
        \node[dot] (b) at (.4,.3) {};
        \node[dot] (m) at (0,.6) {};
        \node[dot] (t) at (0,1) {};
        \node[dot] (z) at (0,1.5) {};
        \draw[dashed] (z) to +(0,.5);
        \draw[dashed] (z) to[out=180,in=180] (t);
        \draw[dashed] (z) to[out=0,in=0] (t);
        \draw (t) to (m);
        \draw (m) to[out=0,in=90] (b);
        \draw (m) to[out=180,in=90] +(0,-1) node[below]{$y^*$};
        \draw (b) to[out=0,in=90] +(.2,-.1) to[out=-90,in=90] +(-1,-.6) node[below]{$x^*$};
        \draw (b) to[out=180,in=90] +(-.2,-.1) to[out=-90,in=90] +(.4,-.6) node[below]{$w\vphantom{^*}$};
      \end{pic}
      =
      \begin{pic}
        \node[dot] (b) at (.4,.3) {};
        \node[dot] (m) at (0,.6) {};
        \node[dot] (t) at (0,1) {};
        \node[dot] (z) at (0,1.5) {};
        \draw[dashed] (z) to +(0,.5);
        \draw[dashed] (z) to[out=180,in=180] (t);
        \draw[dashed] (z) to[out=0,in=0] (t);
        \draw (t) to (m);
        \draw (m) to[out=0,in=90] (b);
        \draw (m) to[out=180,in=90] +(-.5,-1) node[below]{$x^*$};
        \draw (b) to[out=180,in=90] +(-.2,-.2) to[out=-90,in=90] +(.6,-.5) node[below]{$w\vphantom{^*}$};
        \draw (b) to[out=0,in=90] +(.2,-.2) to[out=-90,in=90] +(-.6,-.5) node[below]{$y^*$};
      \end{pic}
    \]
    where we draw solid lines for $E$ and dashed lines for $Z$; the first and third equalities use the strong Frobenius law, and the second and fourth equalities use associativity, naturality of the swap map, and the fact that $Z$ is commutative. 
    Thus
    \[
      \inprod{x \otimes y}{\mu_Z^\dag(w)} 
      = p_1 \circ \mu_Z \circ (p_1^\dag \otimes_{Z} p_1^\dag) \circ (p_1 \otimes_{Z} p_1) \circ \mu_Z^\dag (y^* x^* w)
    \]
    because cyclicity of $p_1$ allows us to change $x^*wy^*$ into $y^*x^*w$ under this map.
    On the other hand, $\inprod{\mu_Z(x \otimes y)}{w}$ is $p_1(y^* x^* w)$.
    Because $(p_1 \otimes_{Z} p_1) \circ \mu_Z^\dag$ is an isometry, 
    $p_1 = p_1 \circ \mu_Z \circ (p_1^\dag \otimes_{Z(E)} p_1^\dag) \circ (p_1 \otimes_{Z(E)} p_1) \circ \mu_Z^\dag$.
    Therefore $\mu_Z$ and $\mu_Z^\dag$ are adjoints.
    
  We can now verify the laws for special dagger Frobenius structures for $\mu_Z$.
  Unitality of $\mu_Z$ follows directly from unitality of $\mu$ because $\eta$ factors through $Z$.
  Speciality is also easy: $\mu_Z \circ \mu_Z^\dag = \mu_Z \circ q \circ \mu^\dag = \mu \circ \mu^\dag = \id[E]$.
  Now observe that $q \circ (\mu_Z \otimes_{C} \id[E]) = (\mu_Z \otimes_{Z} \id[E]) \circ (\id[E] \otimes_{Z} q)$, because both morphisms map $x \otimes y \otimes z$ to $\mu_Z(x \otimes y) \otimes z$.
  It follows from associativity of $\mu$ that 
  \begin{align*}
    & \mu_Z \circ (\mu_Z \otimes_{Z} \id[E]) \circ (\id[E] \otimes_{Z} q) \circ (q \otimes_{C} \id[E]) \\
    = \;& \mu_Z \circ (\id[E] \otimes_{Z} \mu_Z) \circ (q \otimes_{Z} \id[E]) \circ (\id[E] \otimes_{C} q)\text{.}
  \end{align*}
  Therefore $\mu_Z \circ (\id[E] \otimes_{Z} \mu_Z)$ equals $\mu_Z \circ (\mu_Z \otimes_{Z} \id[E])$ on $E \odot_{Z} E \odot_{Z} E$ and hence on all of $E \otimes_{Z} E \otimes_{Z} E$, making $\mu_Z$ associative.
  The Frobenius law follows similarly: the two morphisms
  \begin{align*}
    (\mu_Z \otimes_{Z} \id[E]) \circ (\id[E] \otimes_{Z} \mu_Z^\dag) 
    & = q \circ (\mu_Z \otimes_{C} \id[E]) \circ (\id[E] \otimes_{Z} \mu^\dag) \\
    (\id[E] \otimes_{Z} \mu_Z) \circ (\mu_Z^\dag \otimes_{Z} \id[E]) 
    & = q \circ (\id[E] \otimes_{C} \mu_Z) \circ (\mu^\dag \otimes_{Z} \id[E])
  \end{align*}
  equal each other on $E \odot_{Z} E$, and are therefore equal on all of $E \otimes_{Z} E$.
\end{proof}

The last step is to prove that if $E$ is Frobenius over $C$, then so is its centre $Z(E)$.

\begin{lemma}\label{lem:centerfrobenius}
  Let $C$ be a commutative C*-algebra with a paracompact spectrum.
  If $E$ is a special dagger Frobenius structure in $\cat{Hilb}_C$, then $Z(E)$ is a specialisable dagger Frobenius structure in $\cat{Hilb}_C$. 
\end{lemma}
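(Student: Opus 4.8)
The plan is to route through the bundle picture of Section~\ref{sec:cstarbundles} rather than argue purely algebraically. Writing $X=\Spec(C)$, which is paracompact locally compact Hausdorff, Theorem~\ref{thm:takahashi:algebras} tells us that the special dagger Frobenius structure $E$ is (isomorphic to) $\Gamma_0$ of a finite C*-bundle $p\colon \mathcal{E}\twoheadrightarrow X$ whose fibre $E_t=\Loc_t(E)$ over each $t$ is a finite-dimensional C*-algebra. Since the same theorem produces a specialisable dagger Frobenius structure from \emph{any} finite C*-bundle, it suffices to exhibit $Z(E)$ as $\Gamma_0$ of a finite C*-bundle over $X$. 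The natural candidate is the fibrewise centre $\mathcal{Z}=\coprod_{t\in X}Z(E_t)\subseteq\mathcal{E}$, carrying the subspace topology and the fibrewise (commutative) multiplication inherited from $\mathcal{E}$.

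First I would check that $\mathcal{Z}$ is again a finite C*-bundle in the sense of Definition~\ref{def:cstarbundle}. Local triviality is inherited from $\mathcal{E}$: near $t_0$ there is a homeomorphism $\varphi\colon U\times A\to\mathcal{E}_U$ with $A$ a fixed finite-dimensional C*-algebra and each $\varphi(t,-)\colon A\to E_t$ a $*$-isomorphism, so $\varphi$ restricts to a homeomorphism $U\times Z(A)\to\mathcal{Z}_U$ whose fibrewise components $\varphi(t,-)\colon Z(A)\to Z(E_t)$ are $*$-isomorphisms onto the centres. The fibre $Z(A)$ is a fixed commutative finite-dimensional C*-algebra, and $\dim Z(E_t)\le\dim E_t$ stays uniformly bounded. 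Thus $\mathcal{Z}$ is a commutative finite C*-bundle over $X$.

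The crux is the identification $Z(E)=\Gamma_0(\mathcal{Z})$ as C*-subalgebras of $E=\Gamma_0(\mathcal{E})$. Because the multiplication of $E$ is fibrewise, any section $z$ with $z(t)\in Z(E_t)$ for all $t$ commutes with every element of $E$, giving $\Gamma_0(\mathcal{Z})\subseteq Z(E)$. Conversely, suppose $z\in Z(E)$. For fixed $t$ and any $a\in E_t$, the section construction in the proof of Theorem~\ref{thm:sectionsvanishingatinfinity} (via Urysohn's lemma) furnishes a section $s\in\Gamma_0(\mathcal{E})$ with $s(t)=a$, so evaluation $\Loc_t\colon E\to E_t$ is surjective; centrality of $z$ then forces $z(t)a=az(t)$ for all $a\in E_t$, that is $z(t)\in Z(E_t)$, whence $z\in\Gamma_0(\mathcal{Z})$. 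Feeding $\mathcal{Z}$ back into Theorem~\ref{thm:takahashi:algebras} shows $Z(E)=\Gamma_0(\mathcal{Z})$ is a specialisable dagger Frobenius structure in $\cat{Hilb}_C$, its Frobenius multiplication being the pointwise product already carried by $Z(E)$ as a C*-algebra (Lemma~\ref{lem:frobeniuscstar}).

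The main obstacle is exactly this fibrewise detection of centrality: the algebraic centre $Z(E)$ is defined by commuting with all of $E$, whereas membership in $\mathcal{Z}$ is a fibrewise condition, and bridging the two requires that every element of each fibre $E_t$ be realised by a globally defined section vanishing at infinity. Everything else — local triviality of $\mathcal{Z}$ and the promotion of a finite C*-bundle to a specialisable Frobenius structure — is supplied by the results already in place.
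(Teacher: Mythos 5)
Your proposal is correct and takes essentially the same route as the paper: pass to the finite C*-bundle via Theorem~\ref{thm:takahashi:algebras}, observe that the local trivialisations $\varphi\colon U\times A\to \mathcal{E}_U$ restrict to trivialisations $U\times Z(A)\to \mathcal{Z}_U$ of the fibrewise-centre subbundle, and feed the resulting commutative finite C*-bundle back through the same theorem. The only difference is that you explicitly justify the identification $Z(\Gamma_0(\mathcal{E}))=\Gamma_0(\mathcal{Z})$ via surjectivity of localisation, a step the paper's proof compresses into the unargued equation $Z(E)_t=Z(E_t)$ — a welcome clarification rather than a divergence.
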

\begin{proof}
  By Theorem~\ref{thm:takahashi:algebras}, $E$ corresponds to a finite C*-bundle $p \colon E \twoheadrightarrow X$.
  Define $q \colon Z(E) \to X$ by restriction; we will prove that it is a commutative finite C*-bundle.
  Clearly, $q$ is still continuous and surjective, because it maps $1 \in Z(E_t)$ to $t \in X$.
  Also, $Z(E)_t = Z(E_t)$ is a commutative finite-dimensional C*-algebra.
  Now let $t_0 \in X$. Pick an open neighbourhood $U$ of $t_0$ in $X$, a finite-dimensional C*-algebra $A$, and a map $\varphi \colon U \times A \to p^{-1}(U)$ such that $\varphi(t,-)\colon  \to E_t$ is a $*$-isomorphism for every $t \in U$.
  Set $B=Z(A)$, and define $\psi \colon U \times B \to q^{-1}(U) = p^{-1}(U) \cap Z(E)$ to be the restriction of $\varphi$.
  Then $\psi(t,-) \colon B \to q^{-1}(t) = Z(E)_t$ is a $*$-isomorphism.
\end{proof}

Finally, we can state the transitivity theorem.

\begin{theorem}\label{thm:transitivity}
  Let $X$ be a paracompact locally compact Hausdorff space, and $E$ a monoid in $\cat{Hilb}_{C_0(X)}$.
  The following are equivalent:
  \begin{enumerate}[label=(\roman*)]
  \item $E$ is a special dagger Frobenius structure in $\cat{Hilb}_{C_0(X)}$;
  \item $E$ is a special dagger Frobenius structure in $\cat{Hilb}_{Z(E)}$, and \\$Z(E)$ is a specialisable dagger Frobenius structure in $\cat{Hilb}_{C_0(X)}$.
  \end{enumerate}
\end{theorem}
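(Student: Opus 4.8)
The plan is to prove the two implications separately, harvesting the lemmas established earlier in this section; the forward implication is immediate, while the converse rests on Lemma~\ref{lem:transitivity} together with a fibrewise check that upgrades specialisability to speciality.

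For (i)$\Rightarrow$(ii) I would simply combine three of the preceding lemmas. If $E$ is special over $C_0(X)$, then Lemma~\ref{lem:frobinCisfrobinZ} exhibits $E$ as a Hilbert $Z(E)$-module, Lemma~\ref{lem:frobeniusovercentre} promotes it to a special dagger Frobenius structure in $\cat{Hilb}_{Z(E)}$, and Lemma~\ref{lem:centerfrobenius} shows that $Z(E)$ is a specialisable dagger Frobenius structure in $\cat{Hilb}_{C_0(X)}$. This is exactly statement (ii), so nothing more is required in this direction.

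For (ii)$\Rightarrow$(i) the strategy is to apply Lemma~\ref{lem:transitivity} with $Z=Z(E)$ and $C=C_0(X)$. First I would verify its hypotheses: both $C_0(X)$ and $Z(E)$ are commutative (the centre is commutative by construction), $X$ is paracompact by assumption, and $Z(E)$, being a commutative finite C*-bundle over $X$ by Theorem~\ref{thm:takahashi:algebras}, has as spectrum a finite bundle over $X$ (Proposition~\ref{prop:finitefibres}, after passing to the clopen subset of Lemma~\ref{lem:nonzerodimension} where it is nondegenerate), which is paracompact since a finite-fibred bundle over a paracompact space is paracompact. A special structure is in particular specialisable, so $E$ is specialisable over $Z(E)$, and $Z(E)$ is specialisable over $C_0(X)$ by (ii); Lemma~\ref{lem:transitivity} then yields a specialisable dagger Frobenius structure $E$ in $\cat{Hilb}_{C_0(X)}$.

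The one genuine gap is that Lemma~\ref{lem:transitivity} only delivers specialisability, whereas (i) demands speciality, so the main work is to prove $\mu\circ\mu^\dag=\id$. To this end I would localize: by Theorem~\ref{thm:sectionsvanishingatinfinity} a morphism in $\cat{Hilb}_{C_0(X)}$ agrees with $\id$ as soon as all of its localizations do, and evaluation at $t\in X$ is a strict conditional expectation, so each $\Loc_t$ is a monoidal dagger functor (Proposition~\ref{prop:localization}) that commutes with the dagger and, as in the proof of Lemma~\ref{lem:centerfrobenius}, with the passage to the centre. Hence statement (ii) descends to every fibre: $E_t=\Loc_t(E)$ is special over $Z(E_t)=Z(E)_t$, while $Z(E_t)$ is specialisable over $\C$, and it remains to see that $E_t$ is special over $\C$. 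This is precisely the finite-dimensional transitivity statement adapted from~\cite[Theorem~II.3.8]{demeyeringraham:separable}, which can be checked by hand on $E_t\simeq\bigoplus_i\M_{n_i}$ using the explicit centre-valued conditional expectation. I expect this fibrewise reconciliation of the inner product inherited from $Z(E)$ with the one making $E$ special over $C_0(X)$ to be the principal obstacle; the paracompactness bookkeeping and the remaining structural steps are routine.
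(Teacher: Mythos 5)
Your decomposition of the proof matches the paper's: (i)$\Rightarrow$(ii) is exactly the combination of Lemmas~\ref{lem:frobinCisfrobinZ}, \ref{lem:frobeniusovercentre} and~\ref{lem:centerfrobenius}, and (ii)$\Rightarrow$(i) starts, as in the paper, from Lemma~\ref{lem:transitivity}. You also correctly isolate the remaining issue: Lemma~\ref{lem:transitivity} only yields specialisability, while (i) asks for speciality.

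The gap is in how you propose to close that issue. The fibrewise implication you intend to ``check by hand'' --- $E_t$ special over $Z(E_t)$ and $Z(E_t)$ specialisable over $\C$ imply $E_t$ special over $\C$ --- is false as an abstract statement about finite-dimensional data. Take $E_t=\C^2$ with pointwise multiplication and inner product $\inprod{x}{y}=\tfrac12\sum_i\overline{x_i}y_i$: then $E_t=Z(E_t)$ is trivially special over its centre, and $Z(E_t)$ is specialisable over $\C$ with specialiser $\tfrac{1}{\sqrt2}\cdot\id$, yet $\mu\circ\mu^\dag=2\cdot\id$. The implication only becomes true once you pin down the normalisation relating the $\C$-valued inner product on $E_t$ to the $Z(E_t)$-valued one $p_1(x^*y)$ --- precisely the ``reconciliation of the inner products'' that you yourself flag as the principal obstacle but do not supply. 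The paper closes this gap globally rather than fibrewise: the proof of Lemma~\ref{lem:frobeniusovercentre} establishes $\mu=\mu_Z\circ q$ and $\mu_Z^\dag=q\circ\mu^\dag$ for the canonical quotient $q\colon E\otimes_{C_0(X)}E\to E\otimes_{Z(E)}E$, whence $\mu\circ\mu^\dag=\mu_Z\circ\mu_Z^\dag$ and speciality over $Z(E)$ is equivalent to speciality over $C_0(X)$ with no localisation needed. If you want to retain your localisation strategy you must first prove this compatibility (or its fibrewise shadow); without it the step fails, and with it the localisation becomes unnecessary.
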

\begin{proof}
  Combine Lemmas~\ref{lem:transitivity}, \ref{lem:frobeniusovercentre}, and~\ref{lem:centerfrobenius}. The only thing left to prove is that $E$ is special over $Z(E)$ precisely when it special over $C_0(X)$. But this is already included in the proof of Lemma~\ref{lem:frobeniusovercentre}. 
\end{proof}

The latter algebra in (ii) is commutative, the former is central.
We leave open the question to which monoidal dagger categories the previous theorem can be generalised~\cite{heunenvicarywester:two}; there needs to be enough structure to make sense of the centre of a monoid. We also leave open the question whether it can be made functorial, that is, how the categories and Frobenius structures in (ii) of the previous theorem depend on $E$ and $X$.

\section{Kernels}\label{sec:kernels}

In this final section, we return to the question of Section~\ref{sec:scalars}: what can be said about the base space given just the category of Hilbert modules? We will study special kinds of maps into the tensor unit, namely kernels. It will turn out that the existence of kernels is related to clopen subsets disconnectedness properties of the base space.

A dagger category with a zero object has \emph{dagger kernels} when every morphism $f \colon E \to F$ has a kernel $k \colon K \to E$ satisfying $k^\dag \circ k =\id[E]$~\cite{heunenjacobs:kernels}. 
Similarly, it has \emph{dagger equalisers} when every pair of morphisms $f,g \colon E \to F$ has an equaliser $e$ satisfying $e^\dag \circ e = \id$.
In this section we show that $\cat{FHilb}_{C_0(X)}$ has dagger kernels, and discuss when $\cat{Hilb}_{C_0(X)}$ has dagger kernels.

\begin{proposition}\label{prop:kernels}
  If $X$ is a locally compact Hausdorff space, $\cat{Hilb}_{C_0(X)}\bd$ has kernels; the kernel of $f \colon E \to F$ is given by (the inclusion of) $\ker(f) = \{x \in E \mid f(x)=0\}$. 
\end{proposition}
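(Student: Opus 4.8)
The plan is to verify directly that the set-theoretic kernel, equipped with the inherited structure, serves as a categorical kernel, that is, as an equaliser of $f$ and the zero map $0 \colon E \to F$. This makes sense because $\cat{Hilb}_{C_0(X)}\bd$ has a zero object by Lemma~\ref{lem:biproducts}, so kernels are well-defined.

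First I would check that $K = \ker(f)$ is a well-defined object of $\cat{Hilb}_{C_0(X)}\bd$. Since $f$ is $C_0(X)$-linear, $K$ is a $C_0(X)$-submodule of $E$: if $f(x) = f(y) = 0$ and $c \in C_0(X)$, then $f(x+y) = 0$ and $f(xc) = f(x)c = 0$. Because $f$ is bounded, hence continuous, $K = f^{-1}(\{0\})$ is closed in $E$, and therefore complete in the norm of $E$. Restricting the inner product $\inprod{-}{-}_E$ to $K$ then yields a $C_0(X)$-valued inner product whose positivity and definiteness axioms are inherited from $E$, so $K$ is a Hilbert $C_0(X)$-module. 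The inclusion $k \colon K \hookrightarrow E$ is an isometric $C_0(X)$-linear map, hence a morphism in $\cat{Hilb}_{C_0(X)}\bd$, and clearly $f \circ k = 0$.

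Next I would establish the universal property. Given any morphism $g \colon G \to E$ with $f \circ g = 0$, every value $g(x)$ lies in $K$, so $g$ corestricts to a function $h \colon G \to K$ with $k \circ h = g$. Since $k$ is isometric, $h$ is bounded with $\|h\| = \|g\|$, and it is $C_0(X)$-linear because $g$ is; thus $h$ is a morphism. Uniqueness of $h$ follows immediately from injectivity of the inclusion $k$.

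There is essentially no technical obstacle here, and this is the point worth flagging: the argument works precisely because $\cat{Hilb}_{C_0(X)}\bd$ admits all bounded $C_0(X)$-linear maps as morphisms, so the inclusion of a closed submodule is automatically a morphism. In the dagger category $\cat{Hilb}_{C_0(X)}$ this inclusion need not be adjointable, as already seen in the proof of Lemma~\ref{lem:daggersubobjects}, which is exactly why the existence of (dagger) kernels in $\cat{Hilb}_{C_0(X)}$ is a more delicate matter, to be taken up in what follows.
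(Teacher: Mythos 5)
Your proof is correct and follows essentially the same route as the paper's: show $\ker(f)$ is a closed, hence complete, submodule of $E$ with the inherited inner product, observe that the inclusion is a bounded morphism, and inherit the universal property from the underlying (vector space) factorisation. Your explicit verification of the universal property and your closing remark about adjointability failing in $\cat{Hilb}_{C_0(X)}$ are both consistent with what the paper does and proves later.
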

\begin{proof}
  We prove that $\ker(f)$ is always a well-defined object in $\cat{Hilb}_{C_0(X)}\bd$.
  The inherited inner product $\inprod{x}{y}_K = \inprod{x}{y}_E$ is still sesquilinear and positive definite. If $(x_n)$ is a Cauchy sequence in $\ker(f)$, it is also a Cauchy sequence in $E$, and hence has a limit $x \in E$. Because $f$ is adjointable, it is bounded and hence continuous, so that $f(x)=\lim_n f(x_n) = 0$ and $x \in \ker(f)$. Thus $\ker(f)$ is complete.

  The inclusion $\ker(f) \hookrightarrow E$ is bounded because it is fibrewise contractive, and hence a well-defined morphism. It inherits the universal property from the category of vector spaces.
\end{proof}

\begin{proposition}\label{prop:fhilbdaggerkernels}
  If $X$ is a paracompact locally compact Hausdorff space, then $\cat{FHilb}_{C_0(X)}$ has dagger kernels; the kernel of $f \colon E \to F$ is given by (the inclusion of) $\ker(f) = \{x \in E \mid f(x)=0\}$. 
\end{proposition}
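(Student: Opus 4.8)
The plan is to show that the kernel $\ker(f)=\{x\in E\mid f(x)=0\}$ already produced in $\cat{Hilb}_{C_0(X)}\bd$ by Proposition~\ref{prop:kernels} is in fact an object of $\cat{FHilb}_{C_0(X)}$, and that its inclusion serves as a dagger kernel there. The universal property and completeness are inherited directly from Proposition~\ref{prop:kernels}, so nothing categorical remains to check there. Moreover, once $\ker(f)$ is known to be finitely presented projective, the inclusion $k\colon\ker(f)\to E$ is automatically adjointable by Lemma~\ref{lem:finitelypresentedmapsadjointable}, since it is bounded (fibrewise contractive); and because $k$ preserves the inherited inner product, $\inprod{x}{k^\dag k y}=\inprod{kx}{ky}=\inprod{x}{y}$ forces $k^\dag\circ k=\id$, making it a genuine dagger subobject. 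Thus the entire statement reduces to a single claim: \emph{$\ker(f)$ is finitely presented projective}.

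To establish this I would exhibit $\ker(f)$ as an orthogonal direct summand of $E$, and hence of some $C_0(X)^n$, by producing an orthogonal projection $P\in\L(E)=\cat{Hilb}_{C_0(X)}(E,E)$ with range $\ker(f)$. The natural candidate is built from $a=f^\dag f$, a positive element of the \emph{unital} C*-algebra $\L(E)$ (unital because $E$ is finitely presented projective, Example~\ref{ex:pantsbundle}), which has the same kernel as $f$ since $\inprod{x}{ax}=\inprod{f(x)}{f(x)}$. Passing to bundles through the equivalence $\Gamma_0$ of Theorem~\ref{thm:takahashi} and localising as in Proposition~\ref{prop:localization}, $a$ becomes a fibrewise positive endomorphism $a_t$ of the finite-dimensional Hilbert spaces $E_t$, and $\ker(f)$ corresponds fibrewise to the zero-eigenspace $\ker(a_t)$. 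The projection $P$ should be the fibrewise orthogonal projection onto these zero-eigenspaces, obtained by continuous functional calculus as the spectral projection $P=\chi_{\{0\}}(a)$; if $P$ lies in $\L(E)$, then $\ker(f)=PE$ is an orthogonal summand, its dimension $t\mapsto\dim\ker(a_t)$ is locally constant as demanded by Remark~\ref{rem:dimensioncontinuous}, and the argument closes.

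The main obstacle is exactly the continuity of this spectral projection. In general $t\mapsto\dim\ker(a_t)$ is only upper semicontinuous, jumping precisely where the smallest nonzero eigenvalue of $a_t$ tends to $0$; equivalently, $\chi_{\{0\}}$ is continuous on $\operatorname{spec}(a)$ only when $0$ is isolated in $\bigcup_{t}\operatorname{spec}(a_t)$, that is, when there is a \emph{uniform} spectral gap. Producing such a gap is where the real work lies: one must exploit the finiteness of the bundle (uniformly bounded fibre dimension) together with paracompactness — passing to a locally finite cover on which $a$ is trivialised and using a subordinate partition of unity to glue the local spectral projections — and it is at this step that the disconnectedness properties of $X$ flagged in the opening of this section are liable to intervene. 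I would therefore organise the proof by reducing, via Theorem~\ref{thm:takahashi} and localisation, to a statement near each $t_0\in X$ and then patching; the delicate point, and the one I expect to dominate the proof, is verifying that the patched projection really is continuous across the loci where the kernel dimension changes.
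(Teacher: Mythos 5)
Your reduction---everything hinges on showing that $\ker(f)$ is finitely presented projective, after which Lemma~\ref{lem:finitelypresentedmapsadjointable} gives adjointability of the inclusion and the isometry condition is immediate---is exactly the skeleton of the paper's own argument, which asserts that $\ker(f)$ is a subbundle of the finite Hilbert bundle $E$ and then splits it off $C_0(X)^m$ via Theorem~\ref{thm:dualobjects}. But the step you flag as ``the main obstacle'' is a genuine gap, not a technicality to be patched by paracompactness: the spectral projection $\chi_{\{0\}}(f^\dag f)$ need not exist in $\L(E)$ because $t\mapsto\dim\ker(f_t)$ need not be locally constant, and no partition-of-unity gluing can help, since the local spectral projections already fail to exist near a point where that dimension jumps. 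Concretely, take $X=[0,1]$, $E=F=C(X)$ (the tensor unit, certainly finitely presented projective), and let $f$ be multiplication by $g(t)=\max(0,t-\tfrac{1}{2})$. This is adjointable (indeed self-adjoint); $\dim\ker(f_t)$ is $1$ for $t\le\tfrac{1}{2}$ and $0$ for $t>\tfrac{1}{2}$; the spectrum of $f^\dag f$ is $[0,\tfrac{1}{4}]$, with no gap at $0$; and $\ker(f)=\{h\in C[0,1]\mid h|_{[1/2,1]}=0\}$ is a closed ideal whose inclusion into $C[0,1]$ is \emph{not} adjointable, by precisely the Urysohn argument in the proof of Lemma~\ref{lem:daggersubobjects} (the vanishing locus $[\tfrac{1}{2},1]$ is not clopen). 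So $\ker(f)$ is not finitely presented projective, and one checks directly that $f$ has no dagger kernel at all in $\cat{FHilb}_{C([0,1])}$: by Lemma~\ref{lem:daggersubobjects} the only dagger subobjects of the tensor unit over a connected base are $0$ and the whole module, and neither is universal for $f$ (multiplication by $\max(0,\tfrac{1}{2}-t)$ is a nonzero morphism annihilated by $f$ that does not factor through $0$).

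To be fair, the paper's proof leaps over the same point: the claim that a ``subbundle $\ker(f)$ of a finite Hilbert bundle is a finite Hilbert bundle'' is precisely the assertion that the fibrewise kernel dimension is locally constant, which the example above refutes; and the proposition as stated sits uneasily with Proposition~\ref{prop:adjointablekernels}, whose obstruction is manufactured from a multiplication operator on the tensor unit---a morphism that already lives in $\cat{FHilb}_{C_0(X)}$. Your instinct that ``disconnectedness properties of $X$ are liable to intervene'' is exactly right: the statement is salvageable only under a hypothesis that guarantees your spectral gap, e.g.\ restricting to morphisms $f$ for which $0$ is isolated in the spectrum of $f^\dag f$ (equivalently, $f$ has closed range fibrewise in a uniform sense), or to totally disconnected $X$, where the clopen level sets of the upper semicontinuous dimension function can be separated. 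As written, neither your argument nor the paper's closes this gap, and in the stated generality it cannot be closed.
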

\begin{proof}
  First, notice that $K=\ker(f)$ is indeed a well-defined object of $\cat{FHilb}_{C_0(X)}$ by Theorem~\ref{thm:takahashi}: for a subbundle $\ker(f)$ of a finite Hilbert bundle $E$ is a finite Hilbert bundle.
  By Theorem~\ref{thm:dualobjects}, this means there exists $L \in \cat{FHilb}_{C_0(X)}$ such that $K \oplus L \simeq C_0(X)^m$ for some natural number $m$.
  Next, because the map $t \mapsto \dim(E_t)$ is continuous, we can write $X$ as a disjoint union of clopen subsets on which the fibres of $E$ and $F$ have constant dimension. Thus we may assume that $E=C_0(X)^n$ for some natural number $n$.
  Now the inclusion $k \colon K \to E$ is adjointable if and only if the map $[k,0] \colon K \oplus L \simeq C_0(X)^m \to C_0(X)^n$ is. 
  But this follows from Lemma~\ref{lem:finitelypresentedmapsadjointable} because $k$ is bounded.
\end{proof}

When we consider Hilbert modules that are not necessarily finitely presented projective, dagger kernels do not always exist. If they do, the base space $X$ must be \emph{totally disconnected}, that is, its connected components must be singletons. 
If $X$ is compact this is equivalent to $C(X)$ being a C*-algebra of \emph{real rank zero}.

\begin{proposition}\label{prop:adjointablekernels}
  Let $X$ be a locally compact Hausdorff space.
  If $\cat{Hilb}_{C_0(X)}$ has dagger kernels, then $X$ is totally disconnected.
\end{proposition}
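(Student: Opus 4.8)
The plan is to extract a clopen set from the dagger kernel of each scalar and then show that enough clopen sets exist to force total disconnectedness. Fix $f \in C_0(X)$ and let $s \colon C_0(X) \to C_0(X)$ be multiplication by $f$; this is a scalar, adjointable with adjoint multiplication by $\overline{f}$. By hypothesis $s$ has a dagger kernel $k \colon K \to C_0(X)$ with $k^\dag \circ k = \id[K]$. Then $k$ is isometric, so its image is closed in $C_0(X)$, and the scalar $P = k \circ k^\dag$ is a self-adjoint idempotent. The first task is to identify the image of $k$ concretely.

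The key step is to prove $\mathrm{im}(k) = E := \{ g \in C_0(X) \mid fg = 0 \}$. The inclusion $\mathrm{im}(k) \subseteq E$ is immediate from $s \circ k = 0$. For the reverse inclusion I would invoke monoidal well-pointedness (Lemma~\ref{lem:wellpointed}): each $x \in E$ determines an adjointable map $\hat{x} \colon C_0(X) \to C_0(X)$, $\varphi \mapsto x\varphi$, and $s \circ \hat{x} = 0$ since $fx = 0$. By the universal property of the kernel, $\hat{x}$ factors as $k \circ m$, so $\mathrm{im}(\hat{x}) \subseteq \mathrm{im}(k)$; evaluating $\hat{x}$ along an approximate unit of $C_0(X)$ gives $x = \lim x\varphi \in \overline{\mathrm{im}(\hat{x})} \subseteq \mathrm{im}(k)$, using that $\mathrm{im}(k)$ is closed. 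Hence $E = \mathrm{im}(k)$, and because $k$ is an isometry onto $E$ the inclusion $E \hookrightarrow C_0(X)$ is adjointable; equivalently $P$ is multiplication by the characteristic function of a clopen set.

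Writing $U = \{ f \neq 0 \}$, we have $E = \{ g \mid g(\overline{U}) = 0 \}$, so the computation in the proof of Lemma~\ref{lem:daggersubobjects} — that an inclusion of this form is adjointable only when the closed set is clopen — shows $\overline{\{ f \neq 0 \}}$ is clopen for every $f \in C_0(X)$. To finish, let $t_0 \in X$ and let $W$ be any open neighbourhood. Urysohn's lemma supplies $f \colon X \to [0,1]$ with $f(t_0) = 1$ vanishing outside a compact subset of $W$; then $t_0 \in \overline{\{ f \neq 0 \}} \subseteq W$ is a clopen neighbourhood of $t_0$. Thus $X$ has a basis of clopen sets, and since $X$ is Hausdorff, for any $y \neq t_0$ there is a clopen set containing $t_0$ but not $y$, forcing the quasi-component, hence the connected component, of each point to be a singleton. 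Therefore $X$ is totally disconnected.

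The main obstacle is the middle step: translating the abstract categorical dagger kernel into the concrete set $E$ and recognising that the adjointability of its inclusion is exactly the obstruction measured by Lemma~\ref{lem:daggersubobjects}. The subtlety is that, a priori, the kernel computed among \emph{adjointable} maps could be strictly smaller than the set-theoretic kernel; it is precisely monoidal well-pointedness that provides enough morphisms out of the tensor unit to recover all of $E$ inside $\mathrm{im}(k)$. Once this identification is secured, both the clopen conclusion and the topological endgame are routine.
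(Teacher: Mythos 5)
Your argument is correct and follows essentially the same route as the paper: both proofs apply the dagger-kernel hypothesis to multiplication by a Urysohn function and invoke the computation of Lemma~\ref{lem:daggersubobjects} to conclude that $\overline{\{f\neq 0\}}$ is clopen, differing only in the routine topological endgame (you build a clopen basis, the paper directly disconnects a closed set containing two points). Your explicit verification that the categorical dagger kernel coincides with the set-theoretic kernel $\{g \mid fg=0\}$ --- via the global elements $\hat{x}$ and an approximate unit --- is a worthwhile addition, since the paper's proof passes over this identification with ``as in Lemma~\ref{lem:daggersubobjects}''.
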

\begin{proof}
  Let $U \subseteq X$ be a closed set containing distinct points $x,y \in X$.
  Since $X$ is Hausdorff, $x$ and $y$ have disjoint open neighbourhoods $V_x$ and $V_y$.
  Now $\{y\}$ is compact and $V_y$ is open, so Urysohn's lemma constructs $f \in C_0(X)$ with $f(y)=1$ and $f(X \setminus V_y)=0$ so $f(x)=0$.
  Regard $f$ as a morphism $C_0(X) \to C_0(X)$ by $h \mapsto fh$; it has adjoint $h \mapsto f^* h$.
  As in Lemma~\ref{lem:daggersubobjects}, $f$ has a dagger kernel of the form $K = \{ h \in C_0(X) \mid h(W)=0\}$ for a clopen $W \subseteq X$.
  Now $U_x = U \cap (X \setminus W)$ and $U_y = U \cap W$ are both open in $U$, satisfy $U=U_x \cup U_y$ and $U_x \cap U_y = \emptyset$, and are not empty because $x \in U_x$ and $y \in U_y$. 
  Therefore $U$ is not connected.
  That is, $X$ is totally disconnected.
\end{proof}

\begin{remark}
  If $X$ is totally disconnected, does $\cat{Hilb}_{C_0(X)}$ have dagger kernels? 
  The question is whether the inclusion $\ker(f) \hookrightarrow E$ is adjointable. 
  The luxury of finitely presented projectivity as used in the proof of Proposition~\ref{prop:fhilbdaggerkernels} is not available.
  In general it would suffice for $\ker(f)$ to be \emph{self-dual}~\cite[3.3-3.4]{paschke:selfdual}, but it is unclear whether $\ker(f)$ is self-dual when $E$ and $F$ are self-dual and $X$ is totally disconnected; for related functional-analytic problems see~\cite{frankpaulsen:injective,frank:monotonecomplete}. 
  We leave this question open.
\end{remark}

\begin{remark}
  Which categories $\cat{C}$ embed into $\cat{FHilb}_{C_0(X)}$ or $\cat{Hilb}_{C_0(X)}$ for some $X$? 
  We might generalise the strategy of~\cite[7.2]{heunen:embedding} that worked for $\cat{Hilb}$ while removing an inelegant cardinality restriction on the scalars:
  it suffices that $\cat{C}$ is symmetric dagger monoidal; has finite dagger biproducts; has dagger equalisers of cotuples $[f,g],[g,f] \colon E \oplus E \to F$ for $f,g \colon E \to F$; makes every dagger monomorphism a dagger kernel; is well-pointed, and is locally small. 
  The scalars $\cat{C}(I,I)$ then form a unital commutative $*$-ring, and we would need an additional condition guaranteeing that it embeds into a commutative complex *-algebra $C_b(X)$ for some $X$.
  To embed into $\cat{FHilb}_{C_0(X)}$, we additionally require every object in the category $\cat{C}$ to have a dagger dual object.
  As a sanity check that these properties do indeed characterise categories $\cat{C}$ embedding into $\cat{FHilb}_{C_0(X)}$ for some $X$, note that the category $\cat{FHilb}_{C_0(X)}$ itself satisfies all of these properties~\cite[3.6]{lance:hilbert}. 
\end{remark}

\appendix
\section{Bimodules and bicategories}\label{sec:bimodules}

This appendix considers the structure that results if we let the base space vary. Instead of Hilbert modules we then need to consider Hilbert bimodules, and we end up with a bicategory. It will turn out that this bicategory is a continuous extension of the well-studied bicategory of 2-Hilbert spaces.

We start by briefly recalling Hilbert bimodules and their tensor products; for more information we refer to~\cite{lance:hilbert}.
Recall that the adjointable maps $E \to E$ on a Hilbert $A$-module $E$ form a C*-algebra $\L(E)$.

\begin{definition}
  Let $A$ and $B$ be C*-algebras. A \emph{Hilbert $(A,B)$-bimodule} is a (right) Hilbert $B$-module $E$ together with a $*$-homomorphism $\varphi \colon A \to \L(E)$ that is nondegenerate, in the sense that $\varphi(A)(E)$ is dense in $E$.
  A \emph{morphism of Hilbert $(A,B)$-bimodules} is an adjointable map $f \colon E \to F$ of (right) Hilbert $B$-modules that intertwines, \textit{i.e.}\ $f(a(x))=a(f(x))$ for $a \in A$ and $x \in E$.
\end{definition}

A Hilbert $\C$-module is simply a Hilbert space, and a morphism of $\C$-modules is simply an adjointable map between Hilbert spaces.
A Hilbert $A$-module is the same as a Hilbert $(\C,A)$-bimodule, and a morphism of Hilbert $(\C,A)$-bimodules is the same as an adjointable map of Hilbert $A$-modules.
Hence a Hilbert $(A,\C)$-bimodule is precisely a $*$-representation of $A$, and a morphism of Hilbert $(A,\C)$-bimodules is precisely an intertwiner.

\begin{definition}\label{def:tensorproduct}
  The \emph{tensor product} $E \otimes_B F$ of a Hilbert $(A,B)$-bimodule $E$ and a Hilbert $(B,C)$-bimodule $F$ is the algebraic tensor product of $\C$-modules $E \otimes_{\C} F$ made into a Hilbert $A$-$C$-bimodule under the inner product
  \[
    \inprod{x \otimes y}{x' \otimes y'}_{E \otimes_{\C} F} = \inprod{y}{ \inprod{x}{x'}_E(y') }_F
  \]
  by quotienting out $\{x \in E \otimes_{\C} F \mid \inprod{x}{x}_{E \otimes_{\C} F} = 0 \}$ and completing, with the map $A \to \L(E \otimes_B F)$ sending $a$ to $x \otimes y \mapsto a(x) \otimes y$.

  Notice that this quotient automatically enforces $xb \otimes y = x \otimes by$ in $E \otimes_B F$ for $x \in E$, $y \in F$, and $b \in B$. So $E \otimes_B F$ may alternatively be constructed as the algebraic tensor product $E \odot_B F$ over $B$ of $A$-$B$-bimodules and $B$-$C$-bimodules by quotienting out the same subspace and completing in the same inner product.
\end{definition}

The tensor product $E \otimes F$ of Hilbert $A$-modules $E$ and $F$ over a commutative $A$ is got by regarding them as Hilbert $(\C,A)$-bimodules. 
If $A$ is commutative, $F$ is also a Hilbert $(A,A)$-bimodule, via the map $A \to \L(F)$ that sends $a$ to $y \mapsto ya$. 
The tensor product $E \otimes_A F$ of Hilbert bimodules then is a Hilbert $(\C,A)$-bimodule and hence a Hilbert $A$-module $E \otimes F$. Explicitly, it is the completion of the algebraic tensor product $E \otimes_{\C} F$ with the following inner product and (right) $A$-module structure:
\begin{align*}
  \inprod{x_1 \otimes y_1}{x_2 \otimes y_2} & = \inprod{x_1}{x_2} \inprod{y_1}{y_2}, \\
  (x \otimes y) a & = x \otimes (ya).
\end{align*}
Note that this inner product is indeed already nondegenerate~\cite[Proposition~4.5]{lance:hilbert}.

If $f \colon E \to E'$ is a morphism of Hilbert $A$-$B$-bimodules, and $g \colon F \to F'$ is a morphism of Hilbert $B$-$C$-bimodules, then the canonical map $f \otimes_B g \colon E \otimes_B F \to E' \otimes_B F'$ defined by $x \otimes y \mapsto f(x) \otimes g(y)$ for $x \in E$ and $y \in F$ is a well-defined morphism of Hilbert $A$-$C$-bimodules: it is adjointable because $g$ is an intertwiner
\begin{align*}
  \inprod{x' \otimes y'}{f \otimes g(x \otimes y)}_{E' \otimes_B F'}
  & = \inprod{x' \otimes y'}{f(x) \otimes g(y)}_{E' \otimes_B F'} \\
  & = \inprod{y'}{ \inprod{x'}{f(x)}_{E'} (g(y))}_{F'} \\
  & = \inprod{y'}{ g( \inprod{x'}{f(x)}_{E'}(y))}_{F'} \\
  & = \inprod{g^\dag(y')}{ \inprod{f^\dag(x')}{x}_E(y)}_F \\
  & = \inprod{f^\dag \otimes g^\dag (x' \otimes y')}{x \otimes y}_{E \otimes_B F},
\end{align*}
and it is an intertwiner because $f$ is an intertwiner
\[
  a(f \otimes g(x))
  = a(f(x)) \otimes g(y)
  = f(a(x)) \otimes g(y)
  = f \otimes g(a(x)).
\]

\begin{proposition}
  There is a well-defined bicategory $\cat{Hilb}_*$ where:
  \begin{itemize}
  \item 0-cells are locally compact Hausdorff spaces $X$;
  \item 1-cells are Hilbert $C_0(X)$-$C_0(Y)$-bimodules;
  \item the identity 1-cell on $X$ is $C_0(X)$;
  \item horizontal composition of 1-cells is $(E,F) \mapsto E \otimes_{C_0(Y)} F$;
  \item 2-cells are morphisms of Hilbert $C_0(X)$-$C_0(Y)$-bimodules, \textit{i.e.}\ adjointable intertwiners;
  \item the identity 2-cell on $E$ is the identity function;
  \item vertical composition of 2-cells is function composition;
  \item horizontal composition of 2-cells is $(f,g) \mapsto f \otimes_{C_0(Y)} g$;
  \item associators $(E \otimes_{C_0(Y)} F) \otimes_{C_0(Z)} G \to E \otimes_{C_0(Y)} (F \otimes_{C_0(Z)} G)$ are given by $(x \otimes y) \otimes z \mapsto x \otimes (y \otimes z)$;
  \item left unitors $C_0(X) \otimes_{C_0(X)} E \to E$ are given by $a \otimes x \mapsto a(x)$;
  \item right unitors $E \otimes_{C_0(Y)} C_0(Y) \to E$ are given by $x \otimes b \mapsto xb$;
  \end{itemize}
  as well as a bicategory $\cat{Hilb}_*\bd$ where 2-cells are bounded linear intertwiners.
\end{proposition}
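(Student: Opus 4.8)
The plan is to verify the axioms of a bicategory directly, leaning on the analytic groundwork already in place. Definition~\ref{def:tensorproduct} provides the tensor product on 1-cells as a well-defined Hilbert bimodule, and the calculation immediately preceding the statement shows that $f \otimes_{C_0(Y)} g$ is again an adjointable intertwiner, with adjoint $f^\dag \otimes_{C_0(Y)} g^\dag$. Thus the hard functional-analytic work is done, and what remains is mostly structural bookkeeping. First I would check that each hom-collection $\cat{Hilb}_*(X,Y)$ is genuinely a category: its objects are Hilbert $C_0(X)$-$C_0(Y)$-bimodules and its morphisms adjointable intertwiners. The composite of two adjointable maps is adjointable (its adjoint being the reverse composite), and the composite of two intertwiners is an intertwiner, so composition is well-defined, associative, and unital with the identity function. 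Dropping adjointability, the same argument gives the hom-categories of $\cat{Hilb}_*\bd$.

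Next I would verify that horizontal composition $(E,F) \mapsto E \otimes_{C_0(Y)} F$ is a functor $\cat{Hilb}_*(X,Y) \times \cat{Hilb}_*(Y,Z) \to \cat{Hilb}_*(X,Z)$. On objects this is Definition~\ref{def:tensorproduct}, and on morphisms it is $f \otimes_{C_0(Y)} g$, already shown well-defined. Functoriality reduces to $\id \otimes_{C_0(Y)} \id = \id$ and the interchange law $(f' \circ f) \otimes_{C_0(Y)} (g' \circ g) = (f' \otimes_{C_0(Y)} g') \circ (f \otimes_{C_0(Y)} g)$; both hold on elementary tensors $x \otimes y$ and extend to the completion by boundedness and density.

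Then I would treat the coherence data. The associators and unitors are prescribed on elementary tensors, so for each I must check that the formula descends to the quotient defining the tensor product, is bounded, and preserves inner products; together with density of the range this yields a unitary, hence an invertible 2-cell by~\cite[Theorem~3.5]{lance:hilbert}, and each is an intertwiner by inspection. The associator is the evident reassociation $(x \otimes y) \otimes z \mapsto x \otimes (y \otimes z)$ and is manifestly isometric. The delicate cases are the right unitor $x \otimes b \mapsto xb$ and the left unitor $a \otimes x \mapsto a(x)$: their surjectivity rests on nondegeneracy of the relevant module action, exactly as for $\lambda_E$ and $\rho_E$ in the proof of Proposition~\ref{prop:monoidal}, where approximate units are used to show the range is dense. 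Naturality of all three families is a diagram chase on elementary tensors extended by density.

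Finally I would confirm the pentagon and triangle axioms. Since every map involved is continuous and elementary tensors are dense, it suffices to evaluate both sides of each axiom on an elementary tensor, where they collapse to trivial identities such as sending $((x \otimes y) \otimes z) \otimes w$ to $x \otimes (y \otimes (z \otimes w))$, and to agreement of the two unitor routes on $x \otimes b \otimes y$. The entire argument is insensitive to whether 2-cells are adjointable or merely bounded, so it establishes $\cat{Hilb}_*$ and $\cat{Hilb}_*\bd$ simultaneously. The only real obstacle is the unitor step, namely showing the left and right unitors are unitary rather than merely bounded bijections; this is precisely the nondegeneracy argument imported from Proposition~\ref{prop:monoidal}, and everything else is formal.
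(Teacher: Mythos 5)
Your proposal is correct and follows essentially the same route as the paper: the paper's proof likewise leans on the preceding construction of the bimodule tensor product and the adjointable-intertwiner calculation for $f \otimes_B g$, declares the pentagon clear, and reduces the triangle axiom to the identity $xb \otimes y = x \otimes b(y)$ on elementary tensors. You spell out more explicitly than the paper that unitarity of the unitors rests on nondegeneracy of the module actions as in Proposition~\ref{prop:monoidal}, which is a correct and worthwhile addition rather than a deviation.
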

\begin{proof}
  We have already seen that the homcategories are well-defined, and that horizontal composition is a well-defined functor. The pentagon equations are clear. The triangle equations $(\id[E] \otimes_{C_0(Y)} \lambda_F) \circ \alpha_{E,C_0(Y),F} = \rho_E \otimes_{C_0(Y)} \id[F]$ are satisfied because $xb \otimes_{C_0(Y)} y = x \otimes_{C_0(Y)} b(y)$ for $b \in C_0(Y)$, $x \in E$, and $y \in F$.
  See also~\cite{busszhumeyer:highercstar}, who use a stronger notion of 2-cells.
\end{proof}

Notice that the endohomcategory $\cat{Hilb}_*(X,X)$ equals $\cat{Hilb}_{C_0(X)}$, so that the first (non-dagger) half of Proposition~\ref{prop:monoidal} follows from the previous one.

There is also a well-defined bicategory $\cat{2FHilb}$ of \emph{2-Hilbert spaces}, which has as 0-cells natural numbers, as 1-cells matrices of finite-dimensional Hilbert spaces, and as 2-cells matrices of linear maps~\cite{vicary:higher,heunenvicarywester:two}. 

\begin{proposition}
  There is a pseudofunctor $\cat{2FHilb} \to \cat{Hilb}_*$ that:
  \begin{itemize}
  \item sends a 0-cell $n$ to $\{1,\ldots,n\}$;
  \item sends a 1-cell $(H_{i,j}) \colon m \to n$ to $\bigoplus_{i,j} H_{i,j}$;
  \item sends a 2-cell $(f_{i,j}) \colon (H_{i,j})\to (K_{i,j})$ to the map $(x_{i,j}) \mapsto (f_{i,j}(x_{i,j}))$;
  \item is injective on 0-cells, and a local equivalence.
  \end{itemize}
\end{proposition}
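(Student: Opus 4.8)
The plan is to exploit that the finite discrete space $\{1,\dots,n\}$ has $C_0(\{1,\dots,n\})=\C^n$, so that the hom-categories of $\cat{Hilb}_*$ between the images of $0$-cells are categories of Hilbert $\C^m$-$\C^n$-bimodules. The structural input I would establish first is the block decomposition of such a bimodule $E$ along minimal projections: writing $e_i$ for the minimal projections of $\C^m$ (acting on the left through the nondegenerate $*$-homomorphism $\C^m\to\L(E)$) and $e_j$ for those of $\C^n$ (acting on the right), the blocks $E_{i,j}=e_i\cdot E\cdot e_j$ are Hilbert spaces, $E\simeq\bigoplus_{i,j}E_{i,j}$ as a bimodule, the two actions act as the evident block projections, and the inner product restricts blockwise. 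This identifies $\cat{Hilb}_*(\{1,\dots,m\},\{1,\dots,n\})$ with the category of matrices of (arbitrary) Hilbert spaces, under which $(H_{i,j})\mapsto\bigoplus_{i,j}H_{i,j}$ and $(f_{i,j})\mapsto\big((x_{i,j})\mapsto(f_{i,j}(x_{i,j}))\big)$ is exactly the comparison to be studied.

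Next I would assemble the pseudofunctor data. On $0$-cells set $F(n)=\{1,\dots,n\}$; the images of $1$-cells and $2$-cells are as above, and one checks that the image $2$-cell is adjointable (with adjoint $(f_{i,j}^\dag)$) and intertwines both actions because it is block-diagonal, so each $F_{m,n}\colon\cat{2FHilb}(m,n)\to\cat{Hilb}_*(\{1,\dots,m\},\{1,\dots,n\})$ is a functor. For the unit comparison, the identity $1$-cell of $n$ in $\cat{2FHilb}$ is the matrix $(\delta_{i,j}\C)$, whose image $\bigoplus_i\C=\C^n=C_0(\{1,\dots,n\})$ is the identity $1$-cell of $\cat{Hilb}_*$, giving a canonical unitary unitor. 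For the composition comparison, the essential calculation is that the balanced tensor product implements matrix multiplication: for composable $1$-cells $(H_{i,j})\colon m\to n$ and $(G_{j,k})\colon n\to p$, the cross terms with mismatched middle index are annihilated by the relations defining $\otimes_{\C^n}$ (Definition~\ref{def:tensorproduct}), yielding a canonical unitary
\[
  \Big(\bigoplus_{i,j}H_{i,j}\Big)\otimes_{\C^n}\Big(\bigoplus_{j,k}G_{j,k}\Big)
  \;\simeq\;
  \bigoplus_{i,k}\Big(\bigoplus_j H_{i,j}\otimes G_{j,k}\Big),
\]
which is the compositor from the composite of the images to the image of the composite. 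I would then check naturality of these isomorphisms in the $2$-cells together with the two pseudofunctor coherence axioms (compatibility of the compositor with the associators, and with the unitors), all of which reduce to bookkeeping with the block decomposition.

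Injectivity on $0$-cells is then immediate, since spaces $\{1,\dots,n\}$ of different cardinality are non-homeomorphic. For the local equivalence I would show each $F_{m,n}$ is fully faithful and identify its essential image. Full faithfulness follows because any bimodule morphism between $\bigoplus H_{i,j}$ and $\bigoplus K_{i,j}$ must preserve both families of block projections and is therefore of the form $(f_{i,j})$; since the blocks are finite-dimensional, each $f_{i,j}$ is automatically bounded and adjointable, so $F_{m,n}$ is a bijection on hom-sets. By the decomposition of the first paragraph, $F_{m,n}$ is essentially surjective onto the full subcategory of bimodules all of whose blocks $E_{i,j}$ are finite-dimensional; as $\cat{2FHilb}$ has finite-dimensional entries by definition, $F_{m,n}$ is an equivalence onto this finite-dimensional part, which is the local equivalence claimed.

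The main obstacle I anticipate is the composition comparison and its coherence, rather than full faithfulness or injectivity on $0$-cells. Concretely, one must verify that the balanced tensor product of Definition~\ref{def:tensorproduct} -- formed by quotienting and completing in the bimodule inner product -- genuinely produces the orthogonal direct sum $\bigoplus_j H_{i,j}\otimes G_{j,k}$ with the correct inner product, so that the comparison is a \emph{unitary} and not merely a bounded isomorphism, and that these unitaries satisfy the pseudofunctor axioms against the associators and unitors of both bicategories. This is the only place where the analytic definition of the tensor product, rather than its purely algebraic shadow, requires genuine attention.
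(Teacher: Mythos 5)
Your proposal is correct and follows essentially the same route as the paper: the same pseudofunctor data, the same observation that the balanced tensor product over $\C^n$ annihilates the mismatched cross terms to give a unitary compositor, and the same block decomposition underlying the local equivalence (which the paper delegates to a citation of Blecher--Le Merdy rather than arguing via minimal projections as you do). Your explicit caveat that essential surjectivity only holds onto the bimodules with finite-dimensional blocks is in fact more careful than the paper's bald assertion that the functor ``is an equivalence on homcategories'' --- since $\cat{Hilb}_*(\{1\},\{1\})$ contains all Hilbert spaces, only local full faithfulness is available, and that is all that is needed for the conclusion that $\cat{2FHilb}$ embeds as a full sub-bicategory.
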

\begin{proof}
  Let us show that this is well-defined on 1-cells:
  $E=\bigoplus_{i,j} H_{i,j}$ becomes a right $\C^n$-module by $(x_{i,j}) \cdot (z_j) = (x_{ij}z_j)$;
  it becomes a (right) Hilbert $\C^n$-module by the inner product $\inprod{(x_{i,j})}{(y_{i,j})}_E(j) = \sum_i \inprod{x_{i,j}}{y_{i,j}}_{H_{i,j}}$;
  it becomes a Hilbert $\C^m$-$\C^n$-bimodule by the $*$-representation $\C^m \to \L(E)$ sending $(z_i)$ to $(x_{i,j}) \mapsto (z_i x_{i,j})$. 

  It is also well-defined on 2-cells: the map $f \colon x_{i,j} \mapsto (f_{i,j}(x_{i,j}))$ is adjointable because $\sum_i \inprod{f_{i,j}(x_{i,j})}{y_{i,j}}_{K_{i,j}} = \sum_i \inprod{x_{i,j}}{f^\dag_{i,j}(y_{i,j})}_{H_{i,j}}$; and it is intertwining because $f_{i,j}(z_i x_{i,j}) = z_i f_{i,j}(x_{i,j})$.
  This is clearly functorial on homcategories.

  The pseudofunctorial data consists of 2-cells $\C^n \to \bigoplus_{i,j=1}^n \delta_{i,j} \C$ for identities, and $(\bigoplus_{a,b} H_{a,b}) \otimes_{\C^n} (\bigoplus_{c,d} K_{c,d}) \to \bigoplus_{i,j,k} H_{i,k} \otimes K_{k,j}$ for composition.
  By construction $(\bigoplus_{a,b}H_{a,b}) \otimes_{\C^n} (\bigoplus_{c,d} K_{c,d})$ is $\bigoplus_{a,b,c,d} H_{a,b} \otimes K_{c,d}$, where we identify $((x_{a,b}) \otimes (y_{c,d}))$ with 0 when $x_{a,b} y_{b,d}=0$ for all $a$ and $d$.
  Hence there are natural candidates for both, that are adjointable intertwiners, and furthermore are in fact unitary.
  The coherence diagrams clearly commute.

  Finally, this pseudofunctor is clearly injective on 0-cells, and moreover, it is an equivalence on homcategories; see also~\cite[Proposition~8.1.11]{blecherlemerdy:modules}.
\end{proof}

Thus $\cat{2FHilb}$ is a full subcategory of $\cat{Hilb}_*$. In other words, $\cat{Hilb}_*$ is a conservative \emph{infinite continuous extension} of the finite discrete $\cat{2FHilb}$ that is more suitable for local quantum physics.

\section{Complete positivity}\label{sec:radon}

Localization, as discussed in Section~\ref{sec:tensor}, is essential to the theory of Hilbert modules. And conditional expectations are essential to localization. They are a certain kind of completely positive map. In this appendix we study the category of commutative C*-algebras and completely positive maps further.
Write $\cat{cCstar}_\cp$ for the category of commutative C*-algebras and (completely) positive linear maps.
By Gelfand duality, its objects are isomorphic to $C_0(X)$ for locally compact Hausdorff spaces $X$.
We now consider morphisms.

\begin{definition}
  A \emph{Radon measure} on a locally compact Hausdorff space $X$ is a positive Borel measure $\mu$ satisfying $\mu(U) = \sup_{K \subseteq U} \mu(K)$ where $K$ ranges over the compact subsets of open sets $U$.
  Write $\Radon(X)$ for the set of Radon measures on $X$.
\end{definition}

The set $\Radon(X)$ becomes a locally compact Hausdorff space~\cite[Chapter~13]{taylor:measure} under the following, so-called \emph{vague}, topology: a net $\mu_n$ converges to $\mu$ if and only if $\int_X f \,\mathrm{d}\mu_n$ converges to $\int_X f\,\mathrm{d}\mu$ for all measurable $f \colon X \to \C$.

\begin{definition}
  Write $\cat{Radon}$ for the following category.
  \begin{itemize}
  \item Objects are locally compact Hausdorff spaces $X$.
  \item Morphisms $X \to Y$ are continuous functions $X \to \Radon(Y)$.
  \item Composition of $f \colon X \to \Radon(Y)$ and $g \colon Y \to \Radon(Z)$ is given by
  \[
    (g \circ f)(x)(U) = \int_Y g_U \,\mathrm{d}f(x)
  \]
  where $g_U \colon Y \to \C$ for measurable $U \subseteq Z$ is defined by $y \mapsto g(y)(U)$.
  \item The identity on $X$ sends $x$ to the Dirac measure $\delta_x$.
  \end{itemize}
\end{definition}

\begin{proposition}\label{prop:radon}
  There is an equivalence of categories
  \begin{align*}
    F \colon \cat{Radon} & \to \cat{cCstar}_\cp\op \\
    F(X) & = C_0(X) \\
    F(f)(h)(x) & = \int_X h \,\mathrm{d}f(x)\text{.}
  \end{align*}
\end{proposition}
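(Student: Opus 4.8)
The plan is to realise $F$ as the fibrewise Riesz--Markov--Kakutani representation theorem and to verify the three standard conditions for an equivalence: that $F$ is a well-defined functor, that it is fully faithful, and that it is essentially surjective. The representation theorem identifies, for a locally compact Hausdorff space $Y$, the positive linear functionals on $C_0(Y)$ with (finite) Radon measures on $Y$, and this is the bijection underlying fullness and faithfulness; Gelfand duality supplies essential surjectivity on objects. First I would check functoriality. For a morphism $f \colon X \to \Radon(Y)$, the assignment $F(f)(h)(x) = \int_Y h \, df(x)$ is linear in $h$ and positive because each $f(x)$ is a positive measure; positivity automatically upgrades to complete positivity since $C_0(X)$ and $C_0(Y)$ are commutative, so $F(f)$ is genuinely a morphism of $\cat{cCstar}_\cp$. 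That $F(\id[X]) = \id[C_0(X)]$ follows from $\int_Y h \, d\delta_x = h(x)$. Contravariant functoriality $F(g \circ f) = F(f) \circ F(g)$ reduces to the kernel identity $\int_Z h \, d\big((g \circ f)(x)\big) = \int_Y \big(\int_Z h \, dg(y)\big)\, df(x)$, which holds for $h = 1_U$ by the definition of composition in $\cat{Radon}$ and extends to all $h \in C_0(Z)$ by linearity and monotone convergence.

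Next I would show $F$ is fully faithful. Faithfulness is the uniqueness clause of the representation theorem: if $F(f) = F(f')$ then $\int_Y h \, df(x) = \int_Y h \, df'(x)$ for all $h \in C_0(Y)$ and all $x$, forcing $f(x) = f'(x)$ as measures, hence $f = f'$. For fullness, take a positive linear map $\phi \colon C_0(Y) \to C_0(X)$, that is, a morphism $C_0(X) \to C_0(Y)$ in $\cat{cCstar}_\cp\op$. For each $x \in X$ the functional $\ev_x \circ \phi \colon C_0(Y) \to \C$ is positive, so the representation theorem yields a unique finite Radon measure $f(x)$ on $Y$ with $\phi(h)(x) = \int_Y h \, df(x)$. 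Because $\phi(h) \in C_0(X)$ is continuous for each fixed $h$, the map $x \mapsto f(x)$ is continuous for the vague topology, so $f$ is a morphism of $\cat{Radon}$ and $F(f) = \phi$.

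Essential surjectivity is precisely Gelfand duality: every commutative C*-algebra is isometrically $*$-isomorphic to some $C_0(X)$ and hence lies in the image of $F$ up to isomorphism. The step I expect to be the main obstacle is the well-definedness of $F$ on morphisms --- that $F(f)(h)$ is not merely bounded and continuous but actually lies in $C_0(X)$, vanishing at infinity, and that $F(f)$ is bounded as a map. This is where the interplay between the vague topology on $\Radon(Y)$, the regularity and finiteness of the measures $f(x)$, and the $C_0$ (rather than $C_b$) condition must be controlled carefully; indeed a crude family such as $x \mapsto x\,\delta_{y_0}$ shows that some finiteness/uniformity constraint is needed for the integrals to assemble into a $C_0$-function. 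The cleanest resolution is to identify $\cat{Radon}(X,Y)$ with $\cat{cCstar}_\cp\op(C_0(X),C_0(Y))$ via exactly the fibrewise representation bijection constructed in the fullness step, so that well-definedness of $F$ becomes the statement that these are the continuous families of measures representing genuine positive maps. Once this identification is pinned down, the three conditions combine to yield the asserted equivalence.
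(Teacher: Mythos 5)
Your proposal is correct and follows essentially the same route as the paper: the paper simply cites the proof of Furber--Jacobs' Gelfand duality for positive maps (their Theorem~5.1) to get the equivalence with continuous families of positive functionals on $C_0(Y)$, and then invokes the Riesz--Markov representation theorem (Rudin, Theorem~2.14) to translate those functionals into Radon measures --- precisely the fibrewise identification you verify by hand. Your flagging of the $C_0$-versus-$C_b$ well-definedness issue for $F(f)(h)$ is apt; the paper silently absorbs that point into the cited result rather than addressing it.
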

\begin{proof}
  The proof of~\cite[Theorem~5.1]{furberjacobs:gelfand} shows that $F(X)=C_0(X)$ and $F(f)(h)(x) = f(x)(h)$ define an equivalence $F \colon \cat{R} \to \cat{Cstar}_\cp\op$, for the following category $\cat{R}$:
  \begin{itemize}
  \item Objects are locally compact Hausdorff spaces $X$.
  \item Morphisms $X \to Y$ are continuous maps $X \to R(Y)=\cat{cCstar}_\cp(C_0(Y),\C)$.
  \item Composition of $f \colon X \to R(Y)$ and $g \colon Y \to R(Z)$ is given by
    \[
      (g \circ f)(x)(\varphi) = f(x)(\ev_\varphi \circ g)
    \]
    where $\ev_\varphi \colon R(Z) \to \C$ for $\varphi \in C_0(Z)$ is defined by $\ev_\varphi(h)=h(\varphi)$.
  \item The identity on $X$ sends $x$ to the map $C_0(X) \to \C$ defined by $k \mapsto k(x)$.
  \end{itemize}
  But every element of $R(X)$ is of the form $\int_X (-) \,\mathrm{d}\mu$ for a unique $\mu \in \Radon(X)$ (see~\cite[Theorem~2.14]{rudin:analysis}), translating to the statement of the proposition.
\end{proof}

Finally we consider the special case of conditional expectations.

\begin{proposition}\label{prop:conditionalexpectations}
  The wide subcategory $\cat{Cstar}_\cp$ of conditional expectations is dually equivalent to the wide subcategory $\cat{Radon}_\cp$ of $\cat{Radon}$ of morphisms $f \colon X \to \Radon(Y)$ with a continuous surjection $g \colon Y \twoheadrightarrow X$ satisfying $\supp(f(x)) \subseteq g^{-1}(x)$.
\end{proposition}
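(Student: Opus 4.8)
The plan is to restrict the equivalence $F \colon \cat{Radon} \to \cat{cCstar}_\cp\op$ of Proposition~\ref{prop:radon} to the two wide subcategories; since $F$ is already an equivalence, all that remains is to check that it matches the distinguished morphism classes. Write the two commutative C*-algebras as $C_0(Y)$ and $C_0(X)$ by Gelfand duality. A conditional expectation $C_0(Y) \to C_0(X)$ in the sense of Definition~\ref{def:conditionalexpectation} consists of a completely positive $f \colon C_0(Y) \to C_0(X)$ together with a nondegenerate $*$-homomorphism $g \colon C_0(X) \to M(C_0(Y)) = C_b(Y)$ satisfying $M(f) \circ g = \iota$. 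Under $F$ the map $f$ corresponds to a continuous $\hat f \colon X \to \Radon(Y)$ with $f(h)(x) = \int_Y h\, \mathrm{d}\hat f(x)$; write $\mu_x = \hat f(x)$. First I would record that these measures are finite, with $\mu_x(Y) = M(f)(1)(x)$, so that $M(f)$ is computed by integration against $\mu_x$ even on $h \in C_b(Y)$.

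Second, I would translate $g$ geometrically. As recalled in Section~\ref{sec:tensor}, nondegenerate $*$-homomorphisms $C_0(X) \to M(C_0(Y)) = C_b(Y)$ correspond to continuous maps $\bar{g} \colon Y \to X$ via $g(b) = b \circ \bar{g}$. The defining equation $M(f) \circ g = \iota$ then reads, for every $b \in C_0(X)$ and $x \in X$,
\[
  \int_Y b(\bar{g}(y))\, \mathrm{d}\mu_x(y) = b(x),
  \qquad\text{that is,}\qquad
  \int_X b\, \mathrm{d}(\bar{g}_* \mu_x) = \int_X b\, \mathrm{d}\delta_x,
\]
so that $M(f) \circ g = \iota$ is equivalent to the pushforward identity $\bar{g}_* \mu_x = \delta_x$ for all $x$. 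Unpacking this identity yields exactly the data of $\cat{Radon}_\cp$: applying it to the open set $X \setminus \{x\}$ and using regularity of Radon measures gives $\mu_x(Y \setminus \bar{g}^{-1}(x)) = 0$, i.e.\ $\supp(\hat f(x)) \subseteq \bar{g}^{-1}(x)$; the total mass is $\mu_x(Y) = 1$; and $\bar{g}$ must be surjective, since $x \notin \bar{g}(Y)$ would force $\bar{g}^{-1}(x) = \emptyset$ and hence $\mu_x = 0$, contradicting mass one. Conversely, given a $\cat{Radon}_\cp$-morphism $\hat f$ with witnessing surjection $\bar{g}$, I would set $g(b) = b \circ \bar{g}$, verify directly that $g$ is a nondegenerate $*$-homomorphism, and read off $\bar{g}_* \mu_x = \delta_x$, recovering $M(f) \circ g = \iota$; thus $f$ underlies a conditional expectation.

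These two translations show that $F$ carries $\cat{Radon}_\cp$-morphisms to conditional expectations and back, so $F$ restricts to the asserted dual equivalence; identities and composites are preserved because they already are under $F$, and both classes are closed under composition (which I would note follows from functoriality of $\bar{g}_*$ under pushforward). The main obstacle I anticipate is the multiplier-algebra bookkeeping in the middle step: justifying $M(C_0(Y)) = C_b(Y)$, that its nondegenerate $*$-homomorphisms from $C_0(X)$ are precisely the continuous maps $\bar{g} \colon Y \to X$, and that $M(f)$ remains integration against the $\mu_x$ on all of $C_b(Y)$. A related delicate point is that the condition $M(f) \circ g = \iota$ is exact, so it forces the pushforward identity $\bar{g}_* \hat f(x) = \delta_x$ in full; its content is precisely that each $\hat f(x)$ is a \emph{probability} measure supported in $\bar{g}^{-1}(x)$, and I would make clear that this normalization together with the inclusion $\supp(f(x)) \subseteq g^{-1}(x)$ is the exact content of membership in $\cat{Radon}_\cp$, with the measure-theoretic passage between the two requiring regularity and the closedness of $\bar{g}^{-1}(x)$.
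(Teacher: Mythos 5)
Your proposal is correct and follows the same route as the paper: restrict the equivalence of Proposition~\ref{prop:radon} and check that the two distinguished classes of morphisms match, with the nondegenerate $*$-homomorphism corresponding to $-\circ\bar g$ and the completely positive map to integration against $\hat f(x)$. You supply considerably more detail than the paper's one-line proof, and you correctly observe that $M(f)\circ g=\iota$ amounts exactly to the pushforward identity $\bar g_*\hat f(x)=\delta_x$, which forces each $\hat f(x)$ to be a \emph{probability} measure supported in the fibre --- a normalization that is left implicit in the paper's definition of $\cat{Radon}_\cp$.
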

\begin{proof}
  Simply restrict the equivalence of Proposition~\ref{prop:radon}.
  Concretely, a morphism $(f,g)$ of $\Radon_\cp$ gets sent to the following conditional expectation: the injective $*$-homomorphism is $- \circ g \colon C_0(X) \rightarrowtail C_0(Y)$, and the completely positive map $C_0(Y) \twoheadrightarrow C_0(X)$ maps $\varphi \in C(Y)$ to the function $x \mapsto \int_Y g \,\mathrm{d}f(x)$.
  Conversely, a conditional expectation $E$ is sent to the unique morphism $(f,g)$ satisfying $E(\varphi)(y) = \int_X \varphi \, \mathrm{d} g(f(y))$.
  See also~\cite[Theorem~5.3.3]{pluta:corners}.
\end{proof}

\bibliographystyle{plain}
\bibliography{frobenius}

\begin{thebibliography}{10}

\bibitem{abramskybrandenburger:contextuality}
S.~Abramsky and A.~Brandenburger.
\newblock The sheaf-theoretic structure of non-locality and contextuality.
\newblock {\em New Journal of Physics}, 13:113036, 2011.

\bibitem{abramskyheunen:hstar}
S.~Abramsky and C.~Heunen.
\newblock H*-algebras and nonunital {F}robenius algebras: first steps in
  infinite-dimensional categorical quantum mechanics.
\newblock {\em Clifford Lectures, AMS Proceedings of Symposia in Applied
  Mathematics}, 71:1--24, 2012.

\bibitem{abramskyheunen:operational}
S.~Abramsky and C.~Heunen.
\newblock {\em Logic and algebraic structures in quantum computing and
  information}, chapter Operational theories and categorical quantum mechanics.
\newblock Cambridge University Press, 2015.

\bibitem{aguiar:stronglyseparable}
M.~Aguiar.
\newblock A note on strongly separable algebras.
\newblock {\em Boletin de la Academia Nacional de Ciencias}, 65:51--60, 2000.

\bibitem{albertmuckenhoupt:tracezero}
A.~A. Albert and B.~Muckenhoupt.
\newblock On matrices of trace zero.
\newblock {\em Mich. Math. J.}, 4(1):1--3, 1957.

\bibitem{antonevichkrupnik:trivial}
A.~Antonevich and N.~Krupnik.
\newblock On trivial and non-trivial $n$-homogeneous {C}*-algebras.
\newblock {\em Integr. equ. oper. theory}, 38:172--189, 2000.

\bibitem{auslandergoldman:brauer}
M.~Auslander and O.~Goldman.
\newblock The {B}rauer group of a commutative ring.
\newblock {\em Transactions of the American Mathematical Society}, 97:367--409,
  1960.

\bibitem{blackadar:operatoralgebra}
B.~Blackadar.
\newblock {\em Operator algebras: theory of {C}*-algebras and von {N}eumann
  algebras}.
\newblock Springer, 2006.

\bibitem{blanchardkirchberg:glimmhalving}
E.~Blanchard and E.~Kirchberg.
\newblock Global {G}limm halving for {C}*-bundles.
\newblock {\em Journal of Operator Theory}, 52:385--420, 2004.

\bibitem{blecherlemerdy:modules}
D.~P. Blecher and C.~{Le Merdy}.
\newblock {\em Operator algebras and their modules, an operator space
  approach}.
\newblock Oxford University Press, 2004.

\bibitem{blutecomeau:neumann}
R.~Blute and M.~Comeau.
\newblock Von {N}eumann categories.
\newblock {\em Applied Categorical Structures}, 23(5):725--740, 2015.

\bibitem{bos:groupoids}
R.~Bos.
\newblock Continuous representations of groupoids.
\newblock {\em Houston Journal of Mathematics}, 37(3):807--844, 2011.

\bibitem{busszhumeyer:highercstar}
A.~Buss, C.~Zhu, and R.~Meyer.
\newblock A higher category approach to twisted actions on {C}*-algebras.
\newblock {\em Proceedings of the Edinburgh Mathematical Society}, 56:387--426,
  2013.

\bibitem{coeckeheunenkissinger:cpstar}
B.~Coecke, C.~Heunen, and A.~Kissinger.
\newblock Categories of quantum and classical channels.
\newblock {\em Quantum Information Processing}, 2014.

\bibitem{coeckelal:causal}
B.~Coecke and R.~Lal.
\newblock Causal categories: relativistically interacting processes.
\newblock {\em Foundations of Physics}, 43(4):458--501, 2012.

\bibitem{daunshofmann:sections}
J.~Dauns and K.~H. Hofmann.
\newblock {\em Representation of rings by sections}.
\newblock American Mathematical Society, 1968.

\bibitem{demeyeringraham:separable}
F.~{DeMeyer} and E.~Ingraham.
\newblock {\em Separable algebras over commutative rings}.
\newblock Number 181 in Lecture Notes in Mathematics. Springer, 1971.

\bibitem{dixmier:cstaralgebras}
J.~Dixmier.
\newblock {\em C*-algebras}.
\newblock North Holland, 1981.

\bibitem{dixmierdouady:champs}
J.~Dixmier and A.~Douady.
\newblock Champs continues d'espaces {H}ilbertiens et de {C}*-alg{\`e}bres.
\newblock {\em Bulletin de la Soci{\'e}t{\'e} Math{\'e}matique de France},
  91:227--284, 1963.

\bibitem{dupre:classifyinghilbertbundles}
M.~J. Dupr{\'e}.
\newblock Classifying {H}ilbert bundles.
\newblock {\em Journal of Functional Analysis}, 15(3):244--278, 1974.

\bibitem{enriquemolinerheunentull:space}
P.~{Enrique Moliner}, C.~Heunen, and S.~Tull.
\newblock Space in monoidal categories.
\newblock {\em Quantum Phsyics and Logic}, 2017.
\newblock arXiv:1704.08086.

\bibitem{fell:bundles}
J.~M.~G. Fell.
\newblock {\em An extension of {M}ackey's method to {B}anach *-algebraic
  bundles}.
\newblock Number~90 in Memoirs. American Mathematical Society, 1969.

\bibitem{felldoran:bundles}
J.~M.~G. Fell and R.~S. Doran.
\newblock {\em Representations of *-algebras, locally compact groups, and
  {B}anach *-algebraic bundles}.
\newblock Number 125, 126 in Pure and Applied Mathematics. Academic Press,
  1988.

\bibitem{frank:monotonecomplete}
M.~Frank.
\newblock Hilbert {C}*-modules over monotone complete {C}*-algebras.
\newblock {\em Mathematische Nachrichten}, 175:61--83, 1995.

\bibitem{frankpaulsen:injective}
M.~Frank and V.~Paulsen.
\newblock Injective and projective {H}ilbert {C}*-modules, and {C}*-algebras of
  compact operators.
\newblock {\em arXiv:math.OA/0611348}, 2006.

\bibitem{furberjacobs:gelfand}
R.~W.~J. Furber and B.~P.~F. Jacobs.
\newblock From {K}leisli categories to commutative {C}*-algebras: probabilistic
  {G}elfand duality.
\newblock {\em Logical Methods in Computer Science}, 11(2):5, 2015.

\bibitem{ghezlimaroberts:wstarcategories}
P.~Ghez, R.~Lima, and J.~E. Roberts.
\newblock {$W^*$}-categories.
\newblock {\em Pacific Journal of Mathematics}, 120:79--109, 1985.

\bibitem{gogiosogenovese:nonstandard}
S.~Gogioso and F.~Genovese.
\newblock Infinite-dimensional categorical quantum mechanics.
\newblock In {\em Quantum Physics and Logic}, 2016.

\bibitem{hattori:stronglyseparable}
A.~Hattori.
\newblock On strongly separable algebras.
\newblock {\em Osaka Journal of Mathematics}, 2:369--372, 1965.

\bibitem{heunen:embedding}
C.~Heunen.
\newblock An embedding theorem for {H}ilbert categories.
\newblock {\em Theory and Applications of Categories}, 22(13):321--344, 2009.

\bibitem{heunenjacobs:kernels}
C.~Heunen and B.~Jacobs.
\newblock Quantum logic in dagger kernel categories.
\newblock 27(2):177--212, 2010.

\bibitem{heunenkarvonen:monads}
C.~Heunen and M.~Karvonen.
\newblock Monads on dagger categories.
\newblock {\em Theory and Applications of Categories}, 31(35):1016--1043, 2016.

\bibitem{heunenkissinger:cbh}
C.~Heunen and A.~Kissinger.
\newblock Can quantum theory be characterized in terms of information-theoretic
  constraints?
\newblock {\em arXiv:1604.05948}, 2016.

\bibitem{heunenkissingerselinger:cpproj}
C.~Heunen, A.~Kissinger, and P.~Selinger.
\newblock Completely positive projections and biproducts.
\newblock In {\em Quantum Physics and Logic {X}}, number 171 in Electronic
  Proceedings in Theoretical Computer Science, pages 71--83, 2014.

\bibitem{heunentull:regular}
C.~Heunen and S.~Tull.
\newblock Categories of relations as models of quantum theory.
\newblock In {\em Quantum Physics and Logic XII}, number 195 in Electronic
  Proceedings in Theoretical Computer Science, pages 247--261, 2015.

\bibitem{heunenvicary:cqm}
C.~Heunen and J.~Vicary.
\newblock {\em Categories for Quantum Theory: an introduction}.
\newblock Oxford University Press, 2017.

\bibitem{heunenvicarywester:two}
C.~Heunen, J.~Vicary, and L.~Wester.
\newblock Mixed quantum states in higher categories.
\newblock In {\em Quantum Physics and Logic}, volume 172 of {\em Electronic
  Proceedings in Theoretical Computer Science}, pages 304--315, 2014.

\bibitem{ivankov:quantization}
P.~Ivankov.
\newblock Quantization of noncompact coverings.
\newblock {\em arXiv:1702.07918}, 2017.

\bibitem{kellylaplaza:compactcategories}
G.~M. Kelly and M.~L. Laplaza.
\newblock Coherence for compact closed categories.
\newblock {\em Journal of Pure and Applied Algebra}, 19:193--213, 1980.

\bibitem{lance:hilbert}
E.~C. Lance.
\newblock {\em Hilbert {$C^*$}-modules}, volume 210 of {\em London Mathematical
  Society Lecture Note Series}.
\newblock Cambridge University Press, Cambridge, 1995.
\newblock A toolkit for operator algebraists.

\bibitem{paschke:selfdual}
W.~L. Paschke.
\newblock Inner product modules over {B}*-algebras.
\newblock {\em Transactions of the American Mathematical Society},
  182:443--468, 1973.

\bibitem{paulsen:positive}
V.~Paulsen.
\newblock {\em Completely bounded maps and operator algebras}.
\newblock Cambridge University Press, 2002.

\bibitem{pavlovtroitskii:branchedcoverings}
A.~A. Pavlov and E.~V. Troitskii.
\newblock Quantization of branched coverings.
\newblock {\em Russian Journal of Mathematical Physics}, 18(3):338--352, 2011.

\bibitem{pluta:corners}
R.~Pluta.
\newblock {\em Ranges of bimodule projections and conditional expectations}.
\newblock Cambridge Scholars, 2013.

\bibitem{raeburnwilliams:morita}
I.~Raeburn and D.~P. Williams.
\newblock {\em Morita equivalence and continuous-trace {C}*-algebras},
  volume~60 of {\em Mathematical Surveys and Monographs}.
\newblock American Mathematical Society, 1998.

\bibitem{rudin:analysis}
W.~Rudin.
\newblock {\em Real and complex analysis}.
\newblock McGraw-Hill, third edition, 1987.

\bibitem{selinger:graphicallanguages}
P.~Selinger.
\newblock A survey of graphical languages for monoidal categories.
\newblock In {\em New Structures for Physics}, Lecture Notes in Physics, pages
  289--355. Springer, 2009.

\bibitem{stormer:positive}
E.~St{\o}rmer.
\newblock {\em Positive linear maps of operator algebras}.
\newblock Springer, 2013.

\bibitem{takahashi:hilbertmodules}
A.~Takahashi.
\newblock Hilbert modules and their representation.
\newblock {\em Revista {C}olumbiana de matematicas}, 13:1--38, 1979.

\bibitem{taylor:measure}
M.~E. Taylor.
\newblock {\em Measure theory and integration}.
\newblock American Mathematical Society, 2006.

\bibitem{tomiyama:representation}
J.~Tomiyama.
\newblock Topological representation of {C}*-algebras.
\newblock {\em Tohoku Mathematical Journal}, 14(2):187--204, 1962.

\bibitem{tomiyamatakesaki:bundles}
J.~Tomiyama and M.~Takesaki.
\newblock Applications of fibre bundles to the certain class of {C}*-algebras.
\newblock {\em Tohoku Mathematical Journal}, 13(3):498--522, 1961.

\bibitem{tomiyama:conditionalexpectation}
Y.~Tomiyama.
\newblock On the projection of norm one in {W}*-algebras.
\newblock {\em Proceedings of the Japanese Academy}, 33(10):608--612, 1957.

\bibitem{vicary:quantumalgebras}
J.~Vicary.
\newblock Categorical formulation of quantum algebras.
\newblock {\em Communications in Mathematical Physics}, 304(3):765--796, 2011.

\bibitem{vicary:higher}
J.~Vicary.
\newblock Higher quantum theory.
\newblock {\em arXiv:1207.4563}, 2012.

\bibitem{weggeolsen:ktheory}
N.~E. {Wegge-Olsen}.
\newblock {\em K-theory and {C}*-algebras}.
\newblock Oxford University Press, 1993.

\bibitem{yamagami:duality}
S.~Yamagami.
\newblock Frobenius duality in {C}*-tensor categories.
\newblock {\em Journal of Operator Theory}, 52:3--20, 2004.

\bibitem{zito:cstarcategories}
P.~A. Zito.
\newblock 2-{C}*-categories with non-simple units.
\newblock {\em Advances in mathematics}, 210(1):122--164, 2007.

\end{thebibliography}

\end{document}